\newtheorem{theorem}{Theorem}[section]
\newtheorem{le[mm]a}[theorem]{Le[mm]a}
\newtheorem{proposition}[theorem]{Proposition}
\newenvironment{proof}[1][Proof]{\begin{trivlist}
\item[\hskip \labelsep {\bfseries #1}]}{\end{trivlist}}
\newenvironment{definition}[1][Definition]{\begin{trivlist}
\item[\hskip \labelsep {\bfseries #1}]}{\end{trivlist}}
\newenvironment{remark}[1][Remark]{\begin{trivlist}
\item[\hskip \labelsep {\bfseries #1}]}{\end{trivlist}}
\newenvironment{remarks}[1][Remarks]{\begin{trivlist}
\item[\hskip \labelsep {\bfseries #1}]}{\end{trivlist}}
\newenvironment{exercise}[1][Exercise]{\begin{trivlist}
\item[\hskip \labelsep {\bfseries #1}]}{\end{trivlist}}
\newcommand{\qed}{\nobreak \ifvmode \relax \else
      \ifdim\lastskip<1.5em \hskip-\lastskip
      \hskip1.5em plus0em minus0.5em \fi \nobreak
      \vrule height0.75em width0.5em depth0.25em\fi}
  \newcounter{exer}[section]
  \newenvironment{exer}[1][]{ \refstepcounter{exer} \par\medskip
 \noindent \textbf{Exercise \arabic{exer}:} \rmfamily \slshape}{\medskip}
\title{Geometry on surfaces, a source for mathematical developments}
\author{Norbert A'Campo and Athanase Papadopoulos}
\begin{document}

\maketitle
\begin{abstract}

We present a variety of geometrical and combinatorial tools that are used in the study of geometric structures on surfaces: volume, contact, symplectic, complex and almost complex structures. We start with a series of local rigidity results for such structures. Higher-dimensional analogues are also discussed. Some constructions with Riemann surfaces lead, by analogy, to notions that hold for arbitrary fields, and not only the field of complex numbers. The Riemann sphere is also defined using  surjective homomorphisms of real algebras from the ring of real univariate polynomials to (arbitrary) fields, in which the field with one element is interpreted as the point at infinity of the Gaussian plane of complex numbers.  Several models of the hyperbolic plane and hyperbolic 3-space appear, defined in terms of complex structures on surfaces, and in particular also a rather elementary construction of the hyperbolic plane using
real monic univariate polynomials of degree two without real roots.  Several notions and problems connected with conformal structures in dimension 2 are discussed, including dessins d'enfants, the combinatorial characterization of polynomials and rational maps of the sphere, the type problem, uniformization, quasiconformal mappings, Thurston's characterization of Speiser graphs, stratifications of spaces of monic polynomials,  and others. Classical methods and new techniques complement each other. 

The final version of this paper will appear as a chapter in the Volume \emph{Surveys in Geometry. II} (ed. A. Papadopoulos), Springer Nature Switzerland, 2024.

\bigskip

\noindent {\bf Keywords:} geometric structure, conformal structure, almost complex structure ($J$-field), Riemann sphere, uniformization, the type problem,  rigidity, model for hyperbolic space, cross ratio, Belyi's theorem,  Riemann--Hurwitz formula, Chasles 3-point function, branched covering, type problem, dessin d'enfants, slalom polynomial, slalom curve, space of monic polynomials, stratification, fibered link, divide,  Speiser curve, Speiser graph, line complex, quasiconformal map, almost analytic function.

\bigskip

\noindent {\bf AMS classification:} 12D10, 26C10, 14H55, 30F10, 30F20, 30F30, 53A35, 53D30, 57K10, 

\end{abstract}

\vfill\eject

\tableofcontents

\section{Introduction}

Given a differentiable surface, i.e., a 2-dimensional differentiable manifold,  one can enrich it with various kinds of  geometric structures.  Our first aim in the present survey is to give an introduction to the study of surfaces equipped with locally rigid and homogeneous geometric structures. 

Formally, a geometric structure on a surface $S$ is given by a section of some bundle associated with its tangent bundle $TS$. We shall deal with specific examples, mostly, volume forms, almost complex structures (equivalently, conformal structures, since we are dealing with surfaces) and  Riemannian metrics of constant Gaussian curvature. We shall also consider quasiconformal structures on surfaces. Foliations with singularities, Morse functions,  meromorphic functions and differentials on almost complex surfaces induce geometric structures that are 
locally rigid and homogeneous only in the complement of a discrete set of points on the surface. Laminations, measured foliations and quadratic differentials are examples of less homogeneous geometric structures. They play important roles in the theory of surfaces, as explained by Thurston, but we shall not consider them here. 

A theorem of Riemann gives a complete classification of non-empty simply connected open subsets of 
$\mathbb{R}^2$ that are equipped with almost complex structures. Only two classes remain!  This takes care at the same time of the topological classification of such surfaces without extra geometric structure: they are all homeomorphic. The classical proof of this topological fact invokes the Riemann Mapping Theorem,\index{Riemann Mapping Theorem} that is, it assumes the existence of an almost complex structure on the surface.
 Likewise, only two classes remain in the classification of non-empty open connected and simply connected subsets of 
$\mathbb{R}^2$ that are equipped with a Riemannian metric of constant curvature: the Euclidean and the Bolyai--Lobachevsky plane. The latter is also called the non-Euclidean or hyperbolic plane.

No classification theorem similar to that of simply connected open subsets of  $\mathbb{R}^2$ holds in 
$\mathbb{R}^3$, even if one restricts to contractible subsets. See \cite{W1935} for the historical example, now called ``Whitehead manifold",\index{Whitehead manifold} which, by a result of Dave Gabai (2011)  \cite{Ga2011}, is a manifold of small category,\index{manifold of small category} i.e.,  it is covered by two charts, both of which being copies of $\mathbb{R}^3$ that moreover intersect along a third copy of $\mathbb{R}^3$.

We shall be particularly concerned with almost complex structures,\index{almost complex structure} i.e., conformal structures, on surfaces. The theory of such structures is intertwined with topology. This is not surprising: Riemann's first works on functions of one complex variable gave rise at the same time to fundamental notions of topology. He conceived the notion of ``$n$-extended multiplicity" (\emph{Mannigfaltigkeit}), an early version of $n$-manifold, he introduced basic notions like 
connectedness and
degree of connectivity for surfaces, which led him to the discovery of  Betti numbers in the general setting (see And\'e Weil's article  \cite{Weil-Tardy} on the history of the topic), he classified closed surfaces according to their genus, he introduced branched coverings, and he was the first to notice the topological properties of functions of one complex variable (one may think of the construction of a Riemann surface associated with a multi-valued meromorphic function). At about the same time, Cauchy, in his work on the theory of functions of one complex variable, introduced path integrals and the notion of homotopy of paths. We shall see below many such instances of topology meeting complex geometry.

In several passages of the present survey, we shall encounter graphs that are used in the study of Riemann surfaces. They will appear  in the form of:
\begin{enumerate}
\item  Speiser graphs\index{Speiser graph} associated with branched coverings of the sphere: these are used in Thurston's realization theorem for branched coverings\index{branched covering!postcritically finite!Thurston theorem} (\S \ref{s:Thurston}), in the type problem (\S \ref{s:Nevanlinna}), in the theory of  dessins d'enfants (\S \ref{s:dessins}) and in a cell-decomposition of the space of rational maps (\S \ref{ss:Rational});
\item
 rooted colored trees associated with slalom polynomials (\S \ref{s:slalom});
 \item
 pictures of monic polynomials used for the stratification of the space of slalom polynomials (\S \ref{s:stratification}).
  \end{enumerate}
  
 At several places, we shall see how familiar constructions using the field of complex numbers can be generalized to other fields. Conversely, algebraic considerations will lead to several models of the Riemann sphere and of 2- and 3-dimensional hyperbolic spaces. Relations with the theory of knots and links will also appear.

Let us give now a more detailed outline of the next sections:

In \S\ref{s:Moser} we present a few classical examples of rigidity\index{Moser rigidity theorem!geometric structures} and  local rigidity results in the setting of geometric structures on $n$-dimensional manifolds. A theorem due to J\"urgen Moser, whose proof is sometimes called ``Moser's Trick", deals with the classification up to isotopy of volume forms on compact connected oriented $n$-dimensional manifolds. We show how this proof can be adapted to the symplectic and contact settings.
A local rigidity result (which we call a Darboux local rigidity theorem) gives a canonical form for volume, symplectic and contact forms on
non-empty connected and simply connected open subsets $(S,\omega)$ of 
$\mathbb{R}^n$.

In dimension two, almost complex structures are also locally rigid, and we present a Darboux-like theorem for them. \index{Darboux rigidity!almost complex structure} The question of the existence and integrability of $J$-structures\index{J@$J$-structure} on higher-dimensional spheres arises naturally.  We survey a result due to Adrian Kirchhoff which says that an $n$-dimensional sphere admits a $J$-structure if and only if the $(n+1)$-dimensional sphere admits a parallelism, that is, a global field of frames.  This deals with the question of the existence of $J$-fields on higher-dimensional spheres, which we also discuss in the same section: only $S^6$ carries such a structure. 

  Section \ref{s:First} is concerned with the first example of Riemann surface, namely, the Riemann sphere. We give several models of this surface. Its realization as the projective space $\mathbb{P}^1(\mathbb{C})$ leads to constructions that are valid for any field $k$ and not only for $\mathbb{C}$. In the same section, we review a realization of $\mathbb{P}^1(\mathbb{C})$ with its round metric as a quotient space  of 
the group  ${\rm SU}(2)$   of linear transformations of $\mathbb{C}^2$ of determinant 1 preserving the standard
 Hermitian product. The intermediate quotient ${\rm SU}(2)/\{\pm {\rm Id\}}$ is isometric to 
$P^3(\mathbb{R})$ and also to the space $T_{l=1}\mathbb{P}^1(\mathbb{C})$ of length $1$ tangent vectors to $S^2$.  
In this description, oriented M\"obius circles on $\mathbb{P}^1(\mathbb{C})$ (that is, the circles of the conformal geometry of $\mathbb{P}^1(\mathbb{C})$)  lift naturally to oriented great 
circles on $S^3={\rm SU}(2)$.
 Also, closed immersed curves without self-tangencies lift to classical links (that is, links in the 3-sphere).

In \S \ref{s:Models}, we present another model of the Riemann sphere, together with models of the hyperbolic plane  and of hyperbolic 3-space. A model of the Riemann sphere is obtained using algebra, namely, fields and ring homomorphisms. In this model, the point at infinity of the complex plane is represented by $\mathbb{F}_1$, the field with one element. The notion of shadow number is introduced, as a geometrical way of viewing the cross ratio.  The hyperbolic plane appears as a space of ideals  equipped
with a geometry naturally given by a family of lines. In this way, the hyperbolic plane has a very simple description which arises from algebra. 
 The cross ratio is used to prove a necessary and sufficient condition for a generic configuration of planes in a real 4-dimensional vector space to be a configuration of complex planes. We then introduce the notions of compatible (or $J$-conformal)\index{J@$J$-conformal metric}\index{J@$J$-compatible metric} Riemannian metric and we prove the existence and uniqueness of such 
  metrics on homogeneous Riemann surfaces with commutative stabilizers. We describe several models of spherical geometry (surfaces of constant curvature +1), and of 2- and 3-dimensional hyperbolic spaces in terms of the complex geometry of surfaces. We then study the notion of $J$-compatible Riemannian metrics. An existence result of such metrics is the occasion to characterize homogeneous Riemann surfaces up to bi-holomorphic equivalence.
  
In \S \ref{s:Uni}, we reduce generality by assuming that the surface $S$ is an open connected and path-connected non-empty subset of the real plane $\mathbb{R}^2$.
Fundamental results appear. For instance, the theorem saying that two open connected and path-connected non-empty subsets of the real plane $\mathbb{R}^2$ are diffeomorphic, a consequence of the Riemann Mapping Theorem.\index{Riemann Mapping Theorem} This theorem says that any nonempty open subset of the complex plane which is not the entire plane is biholomorphically equivalent to the unit disc.  The Riemann Mapping Theorem\index{Riemann Mapping Theorem} generalized to any simply connected Riemann surface (and not restricted to open subsets of the plane) is the famous Uniformization Theorem.\index{uniformization theorem} It leads to the type problem, which we consider in \S \ref{s:Type}.

The next section,  \S \ref{s:Branched}, is concerned with some  aspects of branched coverings between surfaces. A classical combinatorial formula associated with such an object is the Riemann--Hurwitz formula.\index{Riemann--Hurwitz formula} It leads to some natural problems which are still unsolved. A combinatorial object associated with a branched covering of the sphere is a Jordan curve that passes through all the critical values  and which we call a \emph{Speiser curve}.\index{Speiser curve} Its lift by the covering map is a graph we call a  \emph{Speiser graph},\index{Speiser graph} an object that will be used several times   in the rest of the survey. A theorem of Thurston which we recall in this section gives a characterization of oriented graphs on the sphere that are Speiser graphs of some branched covering of the sphere by itself. Thurston proved this theorem as part of his project of understanding what he called the ``shapes" of rational functions of the Riemann sphere. In the same section, we introduce a graph on a surface which is dual to the Speiser graph, often known in the classical literature under the name \emph{line complex},   which we use in an essential way in \S \ref{s:Type}. We reserve the name line complex to another graph.

The type problem,\index{type problem}\index{problem!type} reviewed in \S \ref{s:Type}, is the problem of finding a method for deciding whether a simply connected Riemann surface, defined in some  specific manner (e.g., as a branched covering of the Riemann sphere, or as a surface equipped with some Riemannian metric, or obtained by gluing polygons, etc.) is conformally equivalent to the Riemann sphere, or to the complex plane, or to the open unit disc. We review several methods of dealing with this problem, mentioning works of Ahlfors, Nevanlinna, Teichm\"uller, Lavrentieff and Milnor. Besides the combinatorial tools introduced in the previous sections (namely, Speiser graphs and line complexes), 
 the works on the type problem that we review use the notions of almost analytic function and quasiconformal mapping.\index{quasiconformal mapping}\index{mapping!quasiconformal}

In the last section, \S \ref{s:combinatorics}, combinatorial tools are used for other approaches to Riemann surfaces, in particular, in the theory of dessins d'enfants, in applications to knots and links and  in the theory of slalom polynomials\index{slalom polynomial}. Two\index{polynomial!slalom} different stratifications of the space of monic polynomials are presented.

\section{Rigidity of geometric structures}\label{s:Moser}

In this section, we give several examples of locally rigid structures on surfaces.  A classical example of a non-locally rigid structure is a Riemannian metric on any manifold of dimension $\geq 2$.

\subsection{Volume, symplectic and contact forms}

Moser's theorem\index{Moser trick}\index{Moser rigidity theorem!volume form} says that only the total volume\index{volume form} of a smooth volume form on a connected compact manifold matters, namely, two volume forms of equal total volume are isotopic. More precisely:\index{Moser trick}\index{Moser rigidity theorem!volume form} 

\begin{theorem}[J. Moser \cite{Mos65}]
Let $M$ be a compact connected oriented manifold of dimension $n$ equipped with two smooth volume forms $\omega_0$ and $\omega_1$ of equal total volume. Then there exists an isotopy $\phi_t,\, t\in [0,1],$ satisfying $\phi_t^*(t\omega_1+(1-t)\omega_0)=\omega_0$. In particular, we have $\omega_0=\phi_1^{*}\omega_1$.
\end{theorem}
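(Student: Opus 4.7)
The plan is to apply what is now called \emph{Moser's trick}: interpolate linearly between $\omega_0$ and $\omega_1$, then find a time-dependent vector field whose flow carries $\omega_0$ to each intermediate form.

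First I would set $\omega_t := (1-t)\omega_0 + t\omega_1$ for $t \in [0,1]$ and check that this is a volume form for every $t$. Since $M$ is oriented and both $\omega_0,\omega_1$ are positive with respect to the orientation, their convex combination is nowhere vanishing. I would then look for a smooth isotopy $\phi_t$, with $\phi_0 = \mathrm{id}$, generated by a time-dependent vector field $X_t$, such that $\phi_t^{*}\omega_t = \omega_0$. Differentiating this relation in $t$ and using the standard identity $\tfrac{d}{dt}\phi_t^{*}\omega_t = \phi_t^{*}\bigl(\mathcal{L}_{X_t}\omega_t + \tfrac{d\omega_t}{dt}\bigr)$, the required equation becomes
\[
\mathcal{L}_{X_t}\omega_t = -\frac{d\omega_t}{dt} = \omega_0 - \omega_1.
\]
Because $\omega_t$ is a top-degree form, $d\omega_t = 0$, and Cartan's magic formula reduces this to
\[
d(\iota_{X_t}\omega_t) = \omega_0 - \omega_1.
\]

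The key step, and the one I expect to be the main obstacle, is to solve this cohomological equation. Here the hypothesis of equal total volume intervenes: $\int_M (\omega_0 - \omega_1) = 0$, so by de Rham's theorem for compact connected oriented $n$-manifolds (where $H^n_{\mathrm{dR}}(M) \cong \mathbb{R}$ via integration), there exists an $(n-1)$-form $\alpha$ with $d\alpha = \omega_0 - \omega_1$. I would then define $X_t$ to be the unique vector field satisfying $\iota_{X_t}\omega_t = \alpha$; this is well-defined and smooth in $t$ because the map $X \mapsto \iota_X \omega_t$ is a fiberwise isomorphism from $TM$ to $\Lambda^{n-1}T^{*}M$ whenever $\omega_t$ is non-degenerate, which we have already arranged.

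Finally, since $M$ is compact, the time-dependent vector field $X_t$ integrates to a global flow $\phi_t$ defined for all $t \in [0,1]$. By construction, $\tfrac{d}{dt}\phi_t^{*}\omega_t = 0$ and $\phi_0^{*}\omega_0 = \omega_0$, so $\phi_t^{*}\omega_t = \omega_0$ for every $t$; evaluating at $t = 1$ yields $\phi_1^{*}\omega_1 = \omega_0$. The same argument transplanted to the symplectic and contact settings will require the analogous cohomological step (Poincaré lemma or a relative version thereof), which is why the author later upgrades the statement to simply connected open subsets of $\mathbb{R}^n$.
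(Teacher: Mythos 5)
Your proposal is correct and follows essentially the same route as the paper: linear interpolation $\omega_t$, differentiation of $\phi_t^{*}\omega_t=\omega_0$ via Cartan's formula, solving $d\alpha=\omega_0-\omega_1$ using the equal-volume hypothesis and $H^n_{\mathrm{dR}}(M)\cong\mathbb{R}$, and defining $X_t$ by $\iota_{X_t}\omega_t=\alpha$ (the paper uses the opposite sign convention, $i_{X_t}\omega_t=-\alpha$ with $d\alpha=\omega_1-\omega_0$, which is the same thing). The only cosmetic difference is that the paper first writes $\omega_1=f\omega_0$ with $f>0$ to justify that $\omega_t$ is a volume form, where you argue directly from positivity with respect to the orientation.
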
 

\begin{proof} Clearly $\omega_1=f\omega_0$ for some positive function $f$, since for any $p\in M$ and for any oriented frame $X_1,\cdots ,X_n$ at $p$ we have $\omega_0(X_1,\cdots ,X_n)>0$ and $\omega_1(X_1,\cdots ,X_n)>0$. It follows that $t\mapsto \omega_t=t\omega_1+(1-t)\omega_0$ is a path of volume forms that connects the form $\omega_0$ to the form $\omega_1$ and we have 
\begin{align*}
\frac{d}{dt}\int_{[M]}\omega_t &=\int_{[M]}\frac{d}{dt}\omega_t \\ 
&=\int_{[M]}\omega_1-\omega_0 \hbox{ (differentiating the formula for $t\mapsto w_t$)} \\
&=0 \hbox{ (since the two forms have the same volume)}.
\end{align*}
Thus, the de Rham cohomology class $[\omega_1-\omega_0]$ vanishes on the connected manifold $M$, therefore there exists a smooth $(n-1)$-form $\alpha$ with $d\alpha=\omega_1-\omega_0$. Hence, $\frac{d}{dt}\omega_t=d\alpha$.

In order to construct the required isotopy 
$\phi_t$ satisfying $(\phi_t )^*\omega_t=\omega_0$, we need a time-dependent vector field $X_t$ whose flow $\phi^X_t$ induces the isotopy $\phi_t$ and such that the equality 
$(\phi^X_t)^*\omega_t=\omega_0$ holds. Differentiating, using the Cartan formula and the fact that $d\omega_t=0$, yields 
$$0=\frac{d}{dt}(\phi^X_t)^*\omega_t=(\phi^X_t)^*(d(i_{X_t}\omega_t)+d\alpha)=(\phi^X_t)^*(d(i_{X_t}\omega_t+\alpha)).$$
The family of vector fields $X=(X_t)_{t\in [0,1]}$ defined by
$i_{X_t}\omega_t=-\alpha$ satisfies the above equation. The equation $i_{X_t}\omega_t=-\alpha$ has, for a given $(n-1)$-form $\alpha$, has a unique solution, since for each $t\in [0,1]$, $\omega_t$ is a non-degenerate volume form. Therefore the family of forms $ (\phi^X_t)^*\omega_t$ is constant, hence $ (\phi^X_1)^*\omega_1=\omega_0$ as required.\qed
\end{proof}

The above result also holds for a symplectic form,\index{symplectic form} that is, a closed nondegenerate differential 2-form, at the price of a stronger assumption. The proof works verbatim. Thus we get:\index{Moser rigidity theorem!volume form}

\begin{theorem}[J. Moser] Let $M$ be a compact connected oriented manifold of dimension $n$ equipped with two symplectic\index{symplectic form} forms $\omega_0$ and $\omega_1$ of equal periods, i.e., with equal de Rham cohomology classes. Assume that the forms are connected by a smooth path $\omega_t$ of symplectic forms with constant periods, i.e., for all $t\in[0,1]$, $[\omega_t]=[\omega_0]$ in $H^2_{\rm dR}(M)$. Then there exists an isotopy $\phi_t,\, t\in [0,1],$ with $\phi_t^*\omega_t=\omega_0$. In particular, $\omega_0=\phi_1^{*}\omega_1$.\qed
\end{theorem}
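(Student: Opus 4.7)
The plan is to run Moser's trick exactly as in the volume case, the only real change being how one interprets the nondegeneracy that makes the key equation solvable. First, because by hypothesis $[\omega_t] \in H^2_{\mathrm{dR}}(M)$ is constant in $t$, the derivative $\tfrac{d}{dt}\omega_t$ is exact for every $t$. I would invoke a smooth-in-parameter version of the de Rham homotopy operator (e.g.\ via a Riemannian metric and Hodge theory, or via local Poincar\'e primitives glued by a partition of unity) to produce a smooth family of $1$-forms $\alpha_t$ with $d\alpha_t=\tfrac{d}{dt}\omega_t$.

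Next I would look for a time-dependent vector field $X_t$ whose flow $\phi_t$ (which exists for all $t\in[0,1]$ since $M$ is compact) satisfies $\phi_t^*\omega_t=\omega_0$. Differentiating in $t$, applying Cartan's magic formula, and using $d\omega_t=0$ gives
$$0=\frac{d}{dt}\phi_t^*\omega_t=\phi_t^*\bigl(L_{X_t}\omega_t+\tfrac{d}{dt}\omega_t\bigr)=\phi_t^*\bigl(d(i_{X_t}\omega_t)+d\alpha_t\bigr),$$
so it is enough to solve $i_{X_t}\omega_t=-\alpha_t$. This is the step where the symplectic hypothesis is used: since $\omega_t$ is a nondegenerate $2$-form, the contraction map $X\mapsto i_X\omega_t$ is a fiberwise isomorphism from $TM$ to $T^*M$, so there is a unique smooth $X_t$ solving this linear equation. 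Integrating $X_t$ yields the required isotopy $\phi_t$, and at $t=1$ one obtains $\phi_1^*\omega_1=\omega_0$.

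The main obstacle, compared with the volume form case, is purely at the level of hypotheses rather than the argument. For a volume form on a compact connected oriented $n$-manifold, $H^n_{\mathrm{dR}}(M)\cong\mathbb{R}$, so equal total volume automatically yields $[\omega_1-\omega_0]=0$; moreover the straight-line path $t\omega_1+(1-t)\omega_0$ is automatically a path of volume forms because positivity is preserved under convex combinations. Neither feature persists for symplectic forms: $H^2_{\mathrm{dR}}(M)$ can be large, which forces the assumption $[\omega_0]=[\omega_1]$, and a convex combination of nondegenerate $2$-forms need not remain nondegenerate, which forces the assumption that some path $\omega_t$ of symplectic forms with constant de Rham class has been supplied. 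Once these two strengthenings are in place, the Cartan-formula manipulation and the inversion of $i_{(\cdot)}\omega_t$ go through verbatim, so no further analytic difficulty is expected.
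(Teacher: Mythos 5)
Your proof is correct and follows essentially the same route as the paper, which simply asserts that the volume-form Moser argument ``works verbatim'' under the stronger hypotheses --- namely the Cartan-formula computation reducing the problem to solving $i_{X_t}\omega_t=-\alpha_t$, solvable by nondegeneracy of $\omega_t$. If anything you are more careful than the paper: for a general (non-linear) path $\omega_t$ the time derivative $\tfrac{d}{dt}\omega_t$ varies with $t$, so one genuinely needs a \emph{smooth family} of primitives $\alpha_t$ (obtained, as you say, via Hodge theory or glued local primitives), a point the paper leaves implicit by deferring to the volume case, where a single $\alpha$ suffices.
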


The so-called ``Moser trick"\index{Moser trick} works as a ``simplification by d" in the equation $di_{X_t}\omega_t=-d\alpha$ and it amounts to noticing that for a volume form $\omega$ and for an $(n-1)$-form $\beta$ the equation $i_X\omega=\beta$ has a unique solution $X$.

From symplectic structures,\index{contact form} we pass to contact forms and contact structures. 

A \emph{contact form} $\alpha$ on an $n$-dimensional manifold $M$ is a pointwise non-vanishing differential $1$-form such that at each point $p$ in $M$ the restriction of $(d\alpha)_p$ to the kernel of $\alpha_p$ is non-degenerate.  

A \emph{contact structure}\index{contact structure} on $M$ 
is a distribution of hyperplanes in the tangent bundle $TM$ given locally as a field of kernels of a contact form.

 Moser's method also works, even more simply, without ``trick"\index{Moser trick}  and without extra stronger assumption, for families of contact  structures and it gives another proof of the Gray stability theorem\index{Gray stability theorem} for contact forms \cite{Gray1959}:

\begin{theorem}[J. W. Gray \cite{Gray1959}] Let $(\alpha)_{t\in [0,1]}$ be a smooth family of contact forms on a compact manifold $M$.
Then there exists a $t$-dependent vector field $X_t$ on $M$ with flow $\phi_t$ 
and ${\rm kernel}(\phi_t^* \alpha_t)={\rm kernel}(\alpha_0)$. In particular, there exists a family of positive functions $(f_t)_{t\in [0,1]}$ such that for all $t\in [0,1]$,
$\phi_t^* \alpha_t=f_t\alpha_0$.
\end{theorem}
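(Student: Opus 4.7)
The plan is to adapt the Moser-style argument by realizing the required diffeomorphisms $\phi_t$ as the flow of a time-dependent vector field $X_t$ that we will determine pointwise. Make the ansatz $\phi_t^{*}\alpha_t = f_t\,\alpha_0$ for some positive function $f_t$ to be recovered afterwards, differentiate in $t$, and apply Cartan's magic formula:
\begin{equation*}
\tfrac{d}{dt}(\phi_t^{*}\alpha_t) \;=\; \phi_t^{*}\bigl(\mathcal{L}_{X_t}\alpha_t + \tfrac{d}{dt}\alpha_t\bigr) \;=\; \phi_t^{*}\bigl(d\,i_{X_t}\alpha_t + i_{X_t}d\alpha_t + \tfrac{d}{dt}\alpha_t\bigr).
\end{equation*}
Restricting attention to vector fields $X_t$ tangent to the contact distribution $\xi_t := \ker\alpha_t$, so that $i_{X_t}\alpha_t = 0$, and pulling back by $\phi_t^{-1}$, one reduces the target identity to the pointwise equation
\begin{equation*}
i_{X_t}d\alpha_t + \tfrac{d}{dt}\alpha_t \;=\; \mu_t\,\alpha_t
\end{equation*}
for an unknown smooth function $\mu_t$ and an unknown $X_t\in\xi_t$.

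Next, I would determine $\mu_t$ using the Reeb vector field $R_t$ of $\alpha_t$, characterized by $\alpha_t(R_t)=1$ and $i_{R_t}d\alpha_t=0$. Evaluating the reduced equation at $R_t$, and noting that $(i_{X_t}d\alpha_t)(R_t) = -d\alpha_t(R_t,X_t)=0$, one is forced to set $\mu_t := \bigl(\tfrac{d}{dt}\alpha_t\bigr)(R_t)$. With this choice, the 1-form $\mu_t\alpha_t - \tfrac{d}{dt}\alpha_t$ annihilates $R_t$. Since the contact condition makes $d\alpha_t|_{\xi_t}$ a non-degenerate 2-form, the map $Y\mapsto (i_Y d\alpha_t)|_{\xi_t}$ is a linear isomorphism $\xi_t\to\xi_t^{*}$ at each point, and this produces a unique smooth $X_t\in\xi_t$ solving the equation. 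Compactness of $M$ then guarantees that the flow $\phi_t$ of $X_t$ is defined for all $t\in[0,1]$; setting $\beta_t := \phi_t^{*}\alpha_t$, the construction yields $\tfrac{d}{dt}\beta_t = (\mu_t\circ\phi_t)\,\beta_t$, so $\beta_t = f_t\alpha_0$ with $f_t = \exp\!\int_0^t(\mu_s\circ\phi_s)\,ds > 0$, giving the conclusion in the stronger form stated.

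The main obstacle, and the conceptual point that explains why the excerpt says the argument works here \emph{even more simply, without trick}, is the correct isolation of $\mu_t$: unlike in the symplectic case, where one has to ``divide by $d$'' by passing through a cohomological primitive, the splitting $TM=\xi_t\oplus\mathbb{R} R_t$ reduces $i_{X_t}d\alpha_t=\beta_t$ to pointwise linear algebra. Forcing $\mu_t = \bigl(\tfrac{d}{dt}\alpha_t\bigr)(R_t)$ is not a matter of choice but of consistency: it is precisely what places the right-hand side of the reduced equation in the image of the isomorphism $Y\mapsto (i_Y d\alpha_t)|_{\xi_t}$, and thereby unlocks the whole scheme.
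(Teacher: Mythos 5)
Your proposal is correct and follows essentially the same route as the paper: you choose the unique $X_t$ in the contact distribution $\xi_t=\ker\alpha_t$ solving $\bigl(\dot\alpha_t + i_{X_t}d\alpha_t\bigr)\big|_{\xi_t}=0$ via the non-degeneracy of $d\alpha_t|_{\xi_t}$, exactly as in the paper's proof. The only difference is that you complete the paper's sketch by making the normal component explicit — determining $\mu_t=\dot\alpha_t(R_t)$ with the Reeb field and integrating the resulting ODE to produce $f_t$ — details which the paper compresses into the assertion that the kernels of $\phi_t^*\alpha_t$ do not vary.
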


\begin{proof} A measure for the variation of the kernel $[\alpha_t]$ of $\alpha_t$ is the restriction 
$\dot{\alpha_t}_{|[\alpha_t]}$ of 
$\dot{\alpha_t}=\frac{d}{dt} \alpha_t$ to the kernel $[\alpha_t]$. By the non-degeneration of the restriction  
$d\alpha_{|[\alpha_t]}$, there exists a unique $t$-dependent vector field $X_t$ in the distribution (that is, the family of subspaces) $[\alpha_t]$ with 
$\dot{\alpha_t}_{|[\alpha_t]}+i_{X_t}{d\alpha_t}_{|[\alpha_t]}=0$. Hence the kernels of $\phi_t^*\alpha_t$ do not vary since
$\frac{d}{dt}\phi_t^*[\alpha_t]=\phi_t^*(\dot{\alpha_t}_{|[\alpha_t]}+i_{X_t}{d\alpha_t}_{|[\alpha_t]})=0$.\qed

\end{proof}

For more applications, see \cite{Mc-S1998}. The use of a proper exhaustion allows us to extend the above theorem  to pairs of  volume\index{volume form} forms on connected non-compact manifolds of equal finite or infinite total volume.

Furthermore, the above proofs work also in a relative version: if the forms coincide on a closed subset $A$, then
the time-dependent vector field
$X_t$ vanishes along the subset $A$ and generates a flow that fixes the subset $A$. The Darboux type rigidity theorems for volume,\index{Darboux rigidity!volume form} \index{Darboux rigidity!symplectic form}\index{Darboux rigidity!contact form} symplectic and contact forms follow:

\begin{theorem}[Local Darboux rigidities]
Let $\omega$ be a volume or a symplectic form, and let $\alpha$ be a contact form on an $n$-, $2n$- or $(2n+1)$-manifold $M$ respectively.
Then at each point of $M$ there exists a coordinate chart $(x_1, \cdots  ,x_n)$ or $(x_1, \cdots ,x_n,y_1, \cdots ,y_n)$ or $(x_1, \cdots ,x_n,y_1, \cdots ,y_n,z)$ respectively such that the volume form is expressed by
$\omega=dx_1\wedge \cdots \wedge dx_n$, the symplectic form by
$\omega=dx_1\wedge dy_1+ \cdots +dx_{n}\wedge dy_{n}$ and the contact form by $\alpha=dz-y_1dx_1- \cdots - y_ndx_n$.\qed
\end{theorem}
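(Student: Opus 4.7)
The plan is to apply the Moser-type isotopy arguments of the preceding theorems locally, in the relative version where the isotopy fixes a chosen point. Fix $p \in M$ and pick any smooth chart sending $p$ to the origin of $\mathbb{R}^n$ (respectively $\mathbb{R}^{2n}$ or $\mathbb{R}^{2n+1}$); in this chart let $\omega_1$ (or $\alpha_1$) denote the given form and $\omega_0$ (or $\alpha_0$) the standard model appearing in the statement. The goal is to build a local diffeomorphism fixing the origin that carries $\omega_1$ to $\omega_0$.

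The first step, and the genuine obstacle, is to arrange by a further linear change of coordinates that the two forms agree at the origin as alternating multilinear forms on $T_0 \mathbb{R}^n$. This is pure linear algebra, specific to each case: for a volume form it is a single rescaling; for a symplectic form it is the linear Darboux theorem asserting the existence of a symplectic basis for any nondegenerate alternating $2$-form; for a contact form one first rotates $\ker \alpha_1(0)$ onto the model hyperplane and then applies the linear symplectic normal form to the restriction of $(d\alpha_1)(0)$ to that hyperplane. After this step, $\omega_1 - \omega_0$ (respectively $\alpha_1 - \alpha_0$) vanishes at the origin.

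On a small star-shaped neighborhood of the origin the closed form $\omega_1 - \omega_0$ is exact by the Poincaré lemma, and the standard homotopy operator given by radial integration yields a primitive $\beta$ that itself vanishes at the origin. The interpolation $\omega_t = (1-t)\omega_0 + t\omega_1$ equals $\omega_0$ at the origin, hence is nondegenerate there and, by continuity, on a possibly smaller neighborhood uniformly in $t \in [0,1]$. The Moser trick then solves $i_{X_t}\omega_t = -\beta$ uniquely for a time-dependent vector field $X_t$ vanishing at the origin; its flow $\phi_t$ is defined on a fixed neighborhood of the origin for all $t \in [0,1]$, fixes the origin, and satisfies $\phi_t^*\omega_t = \omega_0$. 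Composing the initial chart with $\phi_1^{-1}$ yields the coordinates in which the form has the asserted expression.

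The contact case follows the same outline with the Gray variant in place of the symplectic Moser trick. After the linear normalization at the origin one applies Gray's theorem to the interpolation $\alpha_t = (1-t)\alpha_0 + t\alpha_1$ on a small neighborhood to bring $\ker \alpha_1$ onto $\ker \alpha_0$; this reduces $\alpha_1$ to a positive multiple of $\alpha_0$, and a final adjustment of the transverse coordinate along the Reeb direction absorbs the remaining conformal factor and produces the desired model expression. The essential difficulty throughout is the pointwise linear normalization; once that is accomplished, the argument is a localization of the isotopy construction already carried out for the global results above.
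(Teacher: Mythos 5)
Your plan—normalize the forms at the point by linear algebra, take a primitive of $\omega_1-\omega_0$ vanishing at the origin via the radial homotopy operator, and run the Moser argument in its relative form so that $X_t(0)=0$ and the time-one flow fixes the origin—is exactly the route the paper intends: it states the theorem with no separate proof, deriving it from the remark that the Moser/Gray constructions work relatively, with the vector field vanishing on the set where the forms agree. Your volume and symplectic cases are complete and correct, and you rightly identify the pointwise linear normalization (symplectic basis, etc.) as the step the paper leaves implicit.

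The contact endgame, however, has a genuine gap. After applying Gray to $\alpha_t=(1-t)\alpha_0+t\alpha_1$ you only obtain $\phi_1^*\alpha_1=f\alpha_0$ with $f>0$, and you claim a ``final adjustment of the transverse coordinate along the Reeb direction'' absorbs $f$. This fails: for $\alpha_0=dz-\sum_i y_i\,dx_i$ the Reeb field is $\partial_z$, so such an adjustment is a map fixing $(x,y)$ and replacing $z$ by $Z=g(x,y,z)$; demanding $dZ-\sum_i Y_i\,dX_i=f\,(dz-\sum_i y_i\,dx_i)$ with $X_i=x_i$ forces $\partial g/\partial y_i=0$ and $\partial g/\partial z=f$, hence $\partial f/\partial y_i=\partial^2 g/\partial z\,\partial y_i=0$, which is false for a general conformal factor. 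The standard repair is not to split off $f$ at all: run the Moser scheme demanding $\phi_t^*\alpha_t=\alpha_0$ exactly, with ansatz $X_t=H_tR_t+Y_t$, $Y_t\in\ker\alpha_t$, where $R_t$ is the Reeb field of $\alpha_t$; evaluating $\dot\alpha_t+dH_t+i_{Y_t}d\alpha_t=0$ on $R_t$ gives an ODE $\dot\alpha_t(R_t)+dH_t(R_t)=0$ solvable along Reeb orbits, and the nondegeneracy of $d\alpha_t$ on $\ker\alpha_t$ then determines $Y_t$ uniquely from $i_{Y_t}d\alpha_t\vert_{\ker\alpha_t}=-(\dot\alpha_t+dH_t)\vert_{\ker\alpha_t}$. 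Since your linear normalization makes $\dot\alpha_t$ vanish at the origin, $H_t$ can be chosen so that $X_t(0)=0$ and the time-one flow exists near the origin. Note that the paper's own Gray theorem likewise yields only $\phi_t^*\alpha_t=f_t\alpha_0$, so this extra step is genuinely needed and is not supplied by the results you invoke.
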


\begin{remark}

 The classical Darboux theorem\index{Darboux theorem} holds in the setting of symplectic geometry, see \cite{Darboux1882}. This theorem says that any  symplectic manifold of dimension $2n$ is locally isomorphic (in this setting, it is said to be symplectomorphic) to the linear symplectic space $\mathbb{C}^n$ equipped with its canonical symplectic form $\sum dx\wedge dy$. 
   As a consequence, any two symplectic manifolds of the same dimension are locally symplectomorphic to each other.

\end{remark}

\subsection{Almost complex structures} 

An\index{almost complex structure} almost complex structure $J$ on a differentiable surface $S$ is an endomorphism of the tangent bundle of $S$ satisfying $J^2=-\mathrm{Id}$. More precisely, $J=\{J_p\mid p\in S\}$  is a
smooth family of endomorphisms of tangent spaces $J_p: T_pS\to T_pS$ such that at each point $p\in S$, we have $J_p^2=-\mathrm{Id}_{T_{p}S}$. The standard example is $(\mathbb{R}^2, J)$ where $J$ is the constant family of endomorphisms given by the matrix 
$(\begin{smallmatrix} 0 & -1\\ 1 & 0\\  \end{smallmatrix})$. This corresponds to the plane $\mathbb{C}$ equipped with multiplication by $i$.

The following proof is not based upon the above method.\index{almost complex structure}

\begin{theorem} \label{th:J} {\bf Local $J$-rigidity in real dimension 2.} Let $J$ be an almost complex structure on a surface $S$. Then at each point $p\in S$ there exists a coordinate chart $(x,y)$ such that
$J(\frac{\partial}{\partial x})=\frac{\partial}{\partial y}$ holds.
\end{theorem}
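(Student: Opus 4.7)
My plan is to reduce the statement to the classical existence of isothermal coordinates for a Riemannian metric adapted to $J$, which is the only genuinely analytic ingredient.

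\textbf{Step 1: produce a $J$-invariant Riemannian metric near $p$.} Pick any smooth Riemannian metric $g_0$ on a neighborhood of $p$ and symmetrize it by setting
\[
g(v,w) \;=\; \tfrac{1}{2}\bigl(g_0(v,w)+g_0(Jv,Jw)\bigr).
\]
Then $g$ is smooth, positive definite, and satisfies $g(Jv,Jw)=g(v,w)$, so $J$ acts as a linear isometry of each tangent space. Combined with $J_p^2=-\mathrm{Id}$, this forces $J_p$ to be a rotation by $\pm\pi/2$; the almost complex structure itself distinguishes one direction by declaring the ordered basis $(v, Jv)$ to be positive, and with respect to this orientation $J_p$ is rotation by $+\pi/2$.

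\textbf{Step 2: find isothermal coordinates for $g$.} Here I invoke the Korn--Lichtenstein theorem: around $p$ there exist smooth local coordinates $(x,y)$, positively oriented for the orientation induced by $J$, in which
\[
g \;=\; \lambda^2\bigl(dx^2+dy^2\bigr)
\]
for some smooth positive function $\lambda$. In these coordinates the pair $\bigl(\partial/\partial x,\partial/\partial y\bigr)$ is orthogonal, of equal length, and positively oriented. Since the isometry $J_q$ is rotation by $+\pi/2$ in this orientation at each point $q$ of the chart, we conclude directly that $J(\partial/\partial x)=\partial/\partial y$, which is the claimed normal form.

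\textbf{Where the real work lies.} The main obstacle is Step 2: the existence of isothermal coordinates is the content, equivalent under a choice of auxiliary coordinates to solving locally a Beltrami equation
\[
\partial_{\bar z} f \;=\; \mu\,\partial_z f,
\]
where the complex dilatation $\mu$ is read off from the matrix of $g$ and satisfies $|\mu|<1$. A standard route is to shrink the neighborhood so that $\|\mu\|_\infty$ is as small as desired, then produce a solution $f$ with nonvanishing differential by a Banach fixed-point argument based on the Cauchy transform on a small disc; the real and imaginary parts of $f$ provide the coordinates $(x,y)$. Once this analytic input is granted, Steps 1 and 3 are purely algebraic, and the theorem follows.
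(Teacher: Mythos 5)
Your proof is correct, but it follows a genuinely different route from the paper's. You use the classical local strategy: average an auxiliary metric to get a $J$-invariant metric $g$, invoke Korn--Lichtenstein (equivalently, local solvability of the Beltrami equation $\partial_{\bar z}f=\mu\,\partial_z f$ with $\|\mu\|_\infty$ small, via the Cauchy transform and a fixed-point argument) to get positively oriented isothermal coordinates, and then observe algebraically that a $g$-isometric $J$ with $J^2=-\mathrm{Id}$ must be rotation by $+\pi/2$, so $J(\partial/\partial x)=\partial/\partial y$. The paper instead globalizes: using a partition of unity it transplants $J$ near $p$ to an almost complex structure $J_0$ on the torus $T=\mathbb{R}^2/\mathbb{Z}^2$, picks a volume form $\omega$ and the associated metric $g_{\omega,J_0}(u,v)=\omega(u,J_0 v)$, and solves the single linear equation $d(df\circ J_0)=-k_{g_{\omega,J_0}}\omega$ by Fourier series on the torus --- solvability being guaranteed by Gauss--Bonnet, since $\int_T k\,\omega=0$. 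The conformal factor $e^{2f}$ then makes the metric flat, so $(T,J_0)$ is biholomorphic to $\mathbb{C}/\Gamma$, and flat Euclidean charts give the desired coordinates. The trade-off: your argument is local and self-contained in spirit but its analytic core is singular-integral machinery (the Cauchy transform, plus an elliptic bootstrap to upgrade the fixed-point solution to a smooth one, which you should at least mention); the paper's trick replaces all local elliptic theory by an explicit global Fourier computation on the torus, at the cost of the globalization step and of invoking the classification of flat tori. One cosmetic slip on your side: you refer to "Steps 1 and 3" although your write-up has only two numbered steps --- the orientation bookkeeping you fold into Steps 1 and 2 is presumably the intended third step, and it is indeed needed (if the isothermal coordinates come out negatively oriented one must replace $y$ by $-y$, else one proves $J(\partial/\partial x)=-\partial/\partial y$).
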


\begin{proof} {\bf (Sketch)} First construct, using a partition of unity, an almost complex structure $J_0$ on the torus $T=\mathbb{R}^2/\mathbb{Z}^2$ such that the structures $J$ and $J_0$ are isomorphic when restricted to open neighborhoods $U$ of $p$ on $S$ and $U_0$ of $0$ on $T$. Let $\omega$ be a volume form on $T$ and let $g_{\omega,J_0}$ be the associated Riemannian metric $g_{\omega,J_0}(u,v)=\omega(u,J_0(v))$. Let $f$ be the real function on $T$ satisfying $f(0)=0$ and solving the partial differential equation 
$$d(df\circ J_0)=-k_{g_{\omega,J_0}}\omega,$$
where $k_{g_{\omega,J_0}}$ is the Gaussian curvature of the metric $g_{\omega,J_0}$.
By the Gauss--Bonnet Theorem, $\int_T k_{g_{\omega,J_0}}\omega=0$, therefore the equation admits a solution by  Fourier theory.
Now use the Gauss curvature formula:
$$k_{g_{e^{2f}\omega,J_0}}\omega=k_{g_{\omega,J_0}}\omega+d(df\circ J_0)=0.$$
The metric $g_{e^{2f}\omega,J_0}$ has constant curvature $0$, therefore $(T,J_0)$ is bi-holomorphic to $\mathbb{C}/\Gamma$ for some lattice $\Gamma$ (a 2-generator discrete subgroup), which shows the statement for a local chart at $0\in (T,J_0)$, and hence also for a local chart at any $p\in (S,J)$.\qed

\end{proof}

For a detailed proof of Theorem \ref{th:J}, see \cite[p. 114-117]{A2021}. This theorem shows that every almost complex structure on a differentiable surface $S$ determines in a unique way a holomorphic structure in the usual sense (that is, a structure defined by an atlas of local charts with values in $\mathbb{C}$ and holomorphic local changes).

\begin{exer}
Give a proof of Theorem \ref{th:J} using Moser's trick.\index{Moser trick} 
\end{exer}

{\bf Historical note}
The first definition of an almost complex structure is due to Charles Ehresmann\index{Ehresmann, Charles} who addressed the question of the existence of a complex analytic structure on a topological (resp. differentiable)  manifold of even dimension,
from the point of view of the theory of fiber spaces; cf. Ehresmann's talk at the 1950 ICM \cite{Ehresmann-ICM}. Ehresmann mentions the fact that H. Hopf addressed the same question from a different point of view. He notes in the same paper that by a method proper to even-dimensional spheres he showed that the 4-dimensional sphere does not admit any almost complex structure, a result which was also obtained by Hopf using different methods. 
See also McLane's review of Ehresmann's work \cite{MacLane}. 
Ehresmann and MacLane also refer to the work of Wen-Ts\"un Wu \cite{Wu1, Wu2}, who was a student of Ehresmann in Strasbourg.

\bigskip

\begin{remark}
The Nijenhuis tensor is an obstruction to local integrability of $J$-fields in higher dimensions, where Theorem \ref{th:J} does not hold in the general case, see \cite[p. 124-125]{A2021}. Real dimension $2$ is very special!
\end{remark}

\subsection{Almost complex structures on $n$-spheres}

The existence and integrability\index{J@$J$-structure!high dimensions}\index{J@$J$-structures on spheres}  of $J$-structures in dimension 2 is very special. We mentioned that the $4$-sphere $S^4$ does not admit any $J$-field (Ehresmann and Hopf), but the $6$-sphere does.

Clearly only spheres of even dimension can carry $J$-fields.\index{J@$J$-field} Adrian 
Kirchhoff, in his PhD thesis (ETH Z\"urich 1947) \cite{Kirchhoff1} established a relationship between two non-obviously related structures on spheres $S^{2n}$ and $S^{2n+1}$ of different dimensions; we report on this now.

Recall that a \emph{parallelism}\index{parallelism} on a smooth $n$-manifold is a global field of frames, that is, a field of $n$ tangent vectors which form a basis of the tangent space at each point.

%
%
%
%
%
%

%

%
%

\begin{theorem}[Kirchhoff  \cite{K}]   The sphere $S^n, \,n\geq 0,$ admits a $J$-field if and 
only if the sphere $S^{n+1}$ admits a parallelism.
\end{theorem}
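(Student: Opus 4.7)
The plan is to set up a correspondence using the standard geometric embedding $S^n \subset S^{n+1} \subset \mathbb{R}^{n+2}$. Fix an axis vector $e \in S^{n+1}$ and realize $S^n$ as the equator orthogonal to $e$. Every point $q \in S^{n+1}\setminus\{\pm e\}$ has a unique ``polar'' representation $q = \cos(\theta) e + \sin(\theta) x$ with $x \in S^n$ and $\theta \in (0,\pi)$. On this complement of the two poles, the tangent bundle splits canonically as
\[
TS^{n+1}\big|_{S^{n+1}\setminus\{\pm e\}} \;=\; \mathbb{R}\,\partial_\theta \;\oplus\; \pi^{*}TS^n,
\]
where $\pi: S^{n+1}\setminus\{\pm e\} \to S^n$ is the longitude projection $q\mapsto x$. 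This decomposition is the backbone of the construction in both directions.

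For the direction ``$J$-field on $S^n$ implies parallelism on $S^{n+1}$'', given $J$ on $S^n$ I first use the splitting to transport $J$ (together with a suitable $\theta$-dependent rotation of $\partial_\theta$ into a chosen tangent direction) to a framing off the poles. Concretely, starting from any local frame $Y_1,\ldots,Y_n$ of $TS^n$, the fields $\partial_\theta$ together with $\sin(\theta)\cdot Y_1,\ldots, \sin(\theta)\cdot Y_n$ form a local frame away from $\{\pm e\}$ but degenerate as $\theta\to 0,\pi$. The key step is to rotate these $n$ collapsing fields into $\partial_\theta$ using a one-parameter family of orthogonal transformations generated by the operator $J$, producing a frame that extends continuously and smoothly across the poles. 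The identity $J^2 = -\mathrm{Id}$ is exactly the algebraic condition which guarantees that the vector fields obtained as limits $\theta\to 0^+$ from different longitudes $x$ agree with those obtained as $\theta\to 0^-$, giving a well-defined frame at $e$ (and similarly at $-e$).

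For the reverse direction, given a parallelism $(V_1,\ldots,V_{n+1})$ on $S^{n+1}$, I restrict it to the equator $S^n$. There $TS^{n+1}|_{S^n} = TS^n \oplus \mathbb{R}\nu$ with $\nu$ the unit meridian direction, so each $V_i$ decomposes as $V_i = \alpha_i\,\nu + W_i$ with $W_i\in TS^n$ and $(\alpha_1,\ldots,\alpha_{n+1})$ a smooth $\mathbb{R}^{n+1}$-valued function on $S^n$. Running the parallelism along a full meridian from $e$ back to $e$ (through $-e$) produces a well-defined monodromy operator on $T_x S^n$ for each $x \in S^n$, varying smoothly in $x$; a parity argument shows that this monodromy squares to $-\mathrm{Id}$, and one checks it is smooth in $x$, so it defines the desired $J$-field on $S^n$. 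In effect this construction inverts the forward direction.

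The main obstacle is the behavior at the two poles $\pm e$: the canonical longitude splitting breaks down there, and in the forward direction one must verify that the $\theta$-dependent rotations by $J$ really do glue the collapsing horizontal directions into a globally smooth frame at each pole. Making the rotations smooth at $\theta=0$ and $\theta=\pi$, and simultaneously consistent under the antipodal identification $x \leftrightarrow$ the corresponding tangent direction at the opposite pole, is the delicate point; this is precisely where the relation $J^2 = -\mathrm{Id}$ is used. In the reverse direction, the analogous subtlety is to verify that the meridian monodromy, a priori a rotation depending on a path, descends to a pointwise operator on $TS^n$ and squares to $-\mathrm{Id}$; this again reduces to tracking what happens at the poles.
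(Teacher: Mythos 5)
Your forward direction has a genuine gap at its foundation. The ansatz $\{\partial_\theta,\ \sin(\theta)Y_1,\dots,\sin(\theta)Y_n\}$ presupposes a frame $Y_1,\dots,Y_n$ of $TS^n$, and in every nontrivial case no such global frame exists: a sphere carrying a $J$-field has even dimension, and an even-dimensional sphere admits no nonvanishing vector field at all (hairy ball), let alone a frame. Already for $n=2$ the theorem must manufacture a parallelism of $S^3$ from a $J$-field on the non-parallelizable $S^2$, so any frame of $S^{n+1}\setminus\{\pm e\}$ in which $n$ members are sections of $\pi^*TS^n$ is impossible (restrict to a latitude sphere), and the frame you need must mix $\partial_\theta$ with the tangential directions in an $x$-dependent way. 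That mixing is exactly what you defer to the unexecuted ``key step''; asserting that $J^2=-\mathrm{Id}$ makes the polar limits match (and note $\theta$ ranges in $(0,\pi)$, so there is no limit $\theta\to 0^-$) is the whole theorem, not a remark. The paper sidesteps all pole-matching by working with \emph{ambient} linear algebra: for $v'\in S^n$ it extends $J_{v'}$ to an operator $L_{v'}\in\mathrm{GL}(\mathbb{R}^{n+2})$ with $L_{v'}^2=-\mathrm{Id}$ (sending $e_0\mapsto v'\mapsto -e_0$ and acting via $J_{v'}$ transversally to the $(e_0,v')$-plane), then interpolates $L_v=\sin(t)\,\mathrm{Id}+\cos(t)\,L_{v'}$ for $v=\sin(t)e_0+\cos(t)v'$. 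Invertibility is immediate since the eigenvalues are $\sin(t)\pm i\cos(t)$, smoothness at the poles is automatic since $L_{\pm e_0}=\pm\mathrm{Id}$, and the parallelism is obtained by pushing the \emph{fixed} basis $e_1,\dots,e_{n+1}$ through $(DL_v)_{e_0}$ and projecting parallel to the radial direction $[v]$. Your ``one-parameter family of rotations generated by $J$'' is morally this interpolation, but you apply it to an object (a frame of $TS^n$) that does not exist.

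Your reverse direction fails as stated: the ``meridian monodromy'' of a global parallelism is the identity. A global frame identifies each tangent space $T_qS^{n+1}$ with $\mathbb{R}^{n+1}$, and the composite of these identifications around any closed meridian is tautologically $\mathrm{Id}$ (equivalently, the flat connection defined by the frame has trivial holonomy on the simply connected $S^{n+1}$); comparing instead with Levi--Civita transport does not help, since parallel transport of the round metric around a great circle is also the identity. So the operator you build on $T_xS^n$ is $\mathrm{Id}$, and no parity argument can make it square to $-\mathrm{Id}$; your decomposition $V_i=\alpha_i\nu+W_i$ is introduced but never actually enters the construction. The paper is admittedly terse here (``work backwards''), but a correct reversal extracts $J$ from the global twisting of the trivialization itself --- package the parallelism as operators $L_v\in\mathrm{GL}(\mathbb{R}^{n+2})$ with $L_v(e_0)=v$, normalize $L_{e_0}=\mathrm{Id}$, and recover $J$ on the equator from the equatorial restriction of this family, inverting the explicit interpolation above --- not from any holonomy, which carries no information here.
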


\begin{proof}
The case $n=0$ is special: the tangent space $TS^0$ is of dimension $0$, therefore  
$J={\rm Id}_{TS^0}$ is a $J$-field and $S^1$ admits a parallelism.

 ``Only if" part for $n>0$: In $V=\mathbb{R}^{n+2}$ with the standard basis $e_0,e_1, \cdots ,e_{n+1}$, let $S^n$ be the unit sphere in the span
$[e_1, \cdots ,e_{n+1}]$. Let $S^{n+1}$ be the unit sphere of $V$. Assume that $J$ is a $J$-field on $S^n$. Let $L:S^{n+1} \to {\rm GL}(V), v\mapsto L_v,$ be the continuous map satisfying $L_v(e_0)=v,\, v\in S^{n+1},$ defined as follows:

\begin{itemize}
\item First, for $v\in S^n$, seen as the equator of $S^{n+1}$, we set \newline $L_v(v)=-e_0, \, L_v(e_0)=v$,\newline $L_v(u)=v+J_v(u-v), \, u\in 
[v,e_0]^\perp$.

\item
 For $v\in S^{n+1}$, we can write $v=\sin(t)e_0+\cos(t)v',\,v'\in S^n,\, t\in ]-\pi,\pi[.$ We then set \newline $L_v=\sin(t){\rm Id}_V+\cos(t)L_{v'}$.
 \end{itemize}

We have $L_v\circ L_v=-{\rm Id}_V$ for $v\in S^n$, hence $L_v=\sin(t){\rm Id}_V+\cos(t)L_{v'}\in {\rm GL}(V)$ for $v\in S^{n+1}$, since the eigenvalues are $\sin(t)\pm \cos(t)i$.
Observe that $T_{e_0}S^{n+1}=e_0+[e_0]^\perp$ and $[e_0]^\perp=[e_1, \cdots ,e_{n+1}]$. The differential $(DL_v)_{e_0}:T_{e_0}S^{n+1}\to V$ at $e_0$ of $L_v$ maps the  space $T_{e_0}S^{n+1}$ onto an affine space of dimension $n+1$ in $T_{L_v(e_0)}V$ that intersects transversely the ray $[v]$. Then, for $v\in S^{n+1}$, the images $(DL_v)_{e_0}(e_1), \cdots ,(DL_v)_{e_0}(e_{n+1})\in T_vV$ define a frame in $T_vS^{n+1}=v+[v]^\perp$ by the projection parallel to $[v]$ onto $v+[v]^\perp$.

``If" part:  Work backwards.\qed

\end{proof}

Ehresmann in his ICM talk  \cite{Ehresmann-ICM}  mentions Kirchhoff's results \cite{Kirchhoff1}.

In fact, by a celebrated result of Jeffrey Frank Adams \cite{A}, only the spheres $S^1, S^3, S^7$ admit a parallelism. This implies that $S^4$ does not admit any $J$-field\index{J@$J$-field} and $S^6$ does. Adams' result was obtained several years after Kirchhoff's result.

The question of the existence of a complex structure on $S^6$ is still wide open. How the Nijenhuis integrability condition for a $J$-field\index{J@$J$-field!integrability} on $S^6$ translates into a property of framings on $S^7$ is the subject of a recent paper \cite{LR}.

 We end this section on rigidity by a word on exotic spheres: Any two differentiable manifolds of the same dimension are locally diffeomorphic. But such manifolds may be homeomorphic without being diffeomorphic. The first examples of such a phenomenon are Milnor's exotic 7-spheres \cite{Milnor1956}. In later papers, Milnor constructed additional examples.

\section{The first compact Riemann surface}\label{s:First}
A Riemann surface is a complex 1-dimensional real manifold, or a 2-dimensional manifold equipped with a complex 1-dimensional structure, that is, an atlas whose charts take values in the Gaussian plane $\mathbb{C}$, with holomorphic transition functions. 

In this section, we shall deal with the simplest Riemann surface, the Riemann sphere.

\subsection{The Riemann sphere}\label{s:Riemann-shadow}
The familiar round sphere in 3-space, together with its group of rigid motions, can be seen as a holomorphic object:  its motions are angle-preserving. It is also a one-point compactification of the field of complex numbers. We shall see that this construction as a one-point compactification can be generalized to an arbitrary field. 

First we ask the question:

Why do we need the Riemann sphere? 

The statement: ``Every sequence of complex numbers has a convergent subsequence" is very true, indeed true for bounded sequences. The statement is salvaged without this assumption if we introduce a wish object $w$ with the property that every sequence of complex numbers, for which no subsequence converges
to a complex number,
converges to $w$. In this way, from the familiar Gaussian plane $\mathbb{C}$, we gain a new space,
$\mathbb{C}\cup \{w\}$ in which the above statement improves from very true to true.
This topological construction is the familiar one-point compactification of non compact but locally compact spaces.

Very true is also the statement: ``The ratio $\frac{a}{b}$ is well defined as long as
$(a,b)\not=(0,0)$". Again the statement becomes true if we introduce by wish a new object $w\notin k$ with $\frac{a}{0}=w, a\not=0.$
This algebraic construction applies to any field $k$, and not only $\mathbb{C}$.

In both constructions the object $w$ appears as a newcomer, an immigrant, with a special restricted status.

It is Riemann who gave an interpretation of the new set $X=\mathbb{C}\cup \{w\}$ together with a very rich structure $\Sigma$ on it, for which the new element gains unrestricted status. In short, the automorphism group of $(X,\Sigma)$ acts transitively on this space, that is, the space $X$ is homogeneous.

The above topological construction also shows that the newcomer $w$ is above any bound, so from now on we use the symbol $\infty$ for $w$.

Here is another construction of an infinity, valid for any field.

Let $k$ be a field. An element $\lambda\in k$ can be interpreted as a linear map
$a\in k \mapsto \lambda a\in k$. Its \emph{graph} $G_\lambda\subset k\times k$ is the vector subspace $\{(a,\lambda a) \mid a\in k \}$ of dimension $1$ in $k\times k$. So we get an embedding $\iota:\lambda \in k \mapsto \mathbb{P}^1(k)$ of the field $k$ in the projective space $\mathbb{P}^1(k)$ of all $1$-dimensional vector subspaces in $k\times k$. The vector subspace $G=\{(0,b)\mid b\in k\}$ is the only one which is not in the image of the embedding $\iota$. 

The element $\lambda$ can be retrieved from $G_\lambda$ as a slope: indeed, for any $(a,b)\in G_\lambda$, if $(a,b)\not= (0,0)$ then $a\not= 0$ and  $\lambda=\frac{b}{a}$. 

So the missing vector subspace $G$ corresponds to the forbidden fraction $\frac{1}{0}=\infty$ and can be called $G_\infty$.

In the case where $k=\mathbb{R}$, this is the well-known embedding of $\mathbb{R}$ in the circle of \emph{ directions up to sign}.
Extending $\iota:k\cup \{\infty\}\to \mathbb{P}^1(k)$ by $\iota(\infty)=G_\infty$ gives the interpretation of $k\cup \{\infty\}$ as the projective space $\mathbb{P}^1(k)$. Each linear automorphism $A$ of the $k$-vector space $k^2$
induces a self-bijection $G_A$ of $k\cup\{\infty\}=\mathbb{P}^1(k)$. If the matrix of $A$ is the $2\times 2$-matrix $(\begin{smallmatrix} a & b\\ c & d\\  \end{smallmatrix})\in \mathrm{GL}(2,k)$, 
then $G_A(G_\lambda)=G_{\lambda'}$
with $\lambda'=\frac{a\lambda+b}{c\lambda+d}$.
The transformation $G\in \mathbb{P}^1(k) \mapsto G_A(G)\in \mathbb{P}^1(k)$ or $\lambda\mapsto \frac{a\lambda+b}{c\lambda+d}$ is called a fractional linear or M\"obius transformation.
Note that in particular $G_A(G_\infty)=\frac{a}{c}$.

Given a general field $k$, an important structure on $\mathbb{P}^1(k)$ is provided by a $4$-point function which we shall study in \S \ref{s:shadow}.

At this stage, we restrict to the case $k=\mathbb{C}$. The above construction of $\mathbb{P}^1(k)$  for an arbitrary field $k$ gives the familiar construction of $\mathbb{P}^1(\mathbb{C})= (\mathbb{C}^2-\{0\})/\mathbb{C}^*$, where $\mathbb{C}^*$ denotes the multiplicative group of nonzero complex numbers.

The set $\mathbb{C}\cup\{\infty\}=\mathbb{P}^1(\mathbb{C})$ carries many structures. First, there is the structure of a differentiable manifold given by the following atlas: We set
$U_0=\mathbb{P}^1(\mathbb{C})\setminus \{G_\infty\}$ and 
$U_\infty=\mathbb{P}^1(\mathbb{C})\setminus 
\{G_0\}$. Observe that $U_0=\{G_\lambda\mid \lambda\in\mathbb{C}\}$ and that  every $G\in U_\infty$ is of the type $G'_\sigma=\{(\sigma b,b)\mid  b\in \mathbb{C}\}$ for $\sigma\in \mathbb{C}$.

Define maps $z_0:U_0\to \mathbb{C}$ by
$z_0(G_\lambda)=\lambda$ and $z_\infty:U_\infty\to \mathbb{C}$ by
$z_\infty(G'_\sigma)=\sigma$. 
Both maps are bijections. For $G\in U_0\cap U_\infty$ the two maps are related; indeed, 
$z_0(G)z_\infty(G)=1$. It follows that the system $((U_0,z_0),(U_\infty,z_\infty))$ is
an atlas for a manifold structure with coordinates functions $(z_0,z_\infty)$. Its quality is hidden in the quality of the coordinate change. For  $G\in U_0\cap U_\infty$, from the above implicit relation it follows that
$z_\infty(G)=1/z_0(G), \,z_0(G)=1/z_\infty(G) $. This coordinate change is differentiable; therefore $\mathbb{C}\cup\{\infty\}=\mathbb{P}^1(\mathbb{C})$ is a smooth manifold with charts in the Gaussian plane $\mathbb{C}$.

The smooth $2$-dimensional real manifold $\mathbb{C}\cup\{\infty\}=\mathbb{P}^1(\mathbb{C})$ is diffeomorphic to the unit sphere in the three-dimensional real vector space $\mathbb{R}^3$. More precisely, the coordinate change
$\phi_{\infty,0}:\mathbb{C}^*=z_0(U_0)\to z_\infty(U_\infty)=\mathbb{C}^*$ is given in terms of the natural coordinate $z$ on $\mathbb{C}^*$, by $\phi_{\infty,0}(z)=1/z$. The smooth map $\phi_{\infty,0}:\mathbb{C}^*\to \mathbb{C}^*$ is moreover holomorphic, so the above atlas provides $\mathbb{C}\cup\{\infty\}=\mathbb{P}^1(\mathbb{C})$ with the structure of a Riemann surface. This Riemann Surface is the {\it Riemann Sphere}.\index{Riemann sphere}

Let $U$ be an open subset of the Gaussian plane $\mathbb{C}$. Riemann defined a map $\phi:U\to \mathbb{C}$ to be holomorphic without using an expression that evaluates the map at given points. The idea is the following. The real tangent bundles $TU$ and $T\mathbb{C}$ come with a field $m_i$ of endomorphisms.  (The notation $m_i$ stands for ``multiplication by $i$".) The value $m_{i,p}$ of the field $m_i$ at the point $p$
is the linear map $m_{i,p}:T_pU\to T_pU, \, u\mapsto iu$. To be holomorphic by Riemann's definition is given by the following property of the differential:
$$(D\phi)_p(m_{i,p}(u))=m_{i,\phi(p)}((D\phi)_p(u)).$$
In words, this means that the differential $D\phi$ is $\mathbb{C}$-linear.

Riemann's characterization of holomorphic maps together with the local $J$-Rigidity Theorem
\ref{th:J}
allows us to define a Riemann surface\index{Riemann surface}\index{J@$J$-structure} $(S,J)$ as a real 2-dimensional  differentiable manifold $S$ equipped with a smooth field of endomorphisms $J:TS\to TS$ of its tangent bundle satisfying $J\circ J=-{\rm Id}_{TS}$.

The Riemann Sphere is the first example of a compact Riemann surface. The most familiar non-compact Riemann surface is the Gaussian plane $\mathbb{C}$. Another most important Riemann surface is the unit disc in $\mathbb{C}$. This is also the image of the southern hemisphere by the stereographic projection from the North pole onto a plane passing through the equator. This projection is holomorphic. The importance of the unit disc stems from the fact that it is equipped with the Poincar\'e metric, which makes it a model for the hyperbolic plane.

 \subsection{The group ${\rm SU}(2)$ and its action on the Riemann sphere.}\label{ss:Hermitian}
 
 Now that we are familiar with the Riemann sphere, we study a group action on it.

Let $<u,v>_{\rm Herm}$ be the usual Hermitian product on $\mathbb{C}^2$. This is the complex bilinear form on $\mathbb{C}$ defined by $<u,v>_{\rm Herm}=u_1\bar{v_1}+u_2\bar{v_2}$. The Hermitian perpendicular $L^\perp$ to a complex vector subspace $L$ is again a complex vector subspace.

The group of determinant $1$
linear transformations of $\mathbb{C}^2$ that preserve $<u,v>_{\rm Herm}$ is the group ${\rm SU}(2)$\index{SU2@${\rm SU}(2)$} consisting of all matrices of the form  $(\begin{smallmatrix} a & b\\ 
-\bar{b}& \bar{a}\end{smallmatrix}),\, (a,b)\in \mathbb{C}^2,\, a\bar{a}+b\bar{b}=1$. This group acts on the Riemann sphere by M\"obius transformations, in fact, by rotations. The map $(\begin{smallmatrix} a & b\\ 
-\bar{b}& \bar{a}\end{smallmatrix})\mapsto (a,b)$ defines a diffeomorphism  ${\rm SU}(2)\to S^3$ and induces a Lie group structure on the sphere $S^3$.

The group ${\rm SU}(2)$ acts transitively by conformal automorphisms on the Riemann sphere $\mathbb{P}^1(\mathbb{C})$. The stabilizer of $L=\{(\lambda,0)\mid \lambda\in \mathbb{C}\}$ in ${\rm SU}(2)$ is the group 
$(\begin{smallmatrix} a & 0\\ 
0 & \bar{a}\end{smallmatrix}),\, a \in \mathbb{C}, a\bar{a}=1$, which is isomorphic to the group of complex numbers of norm $1$. The quotient construction induces a Riemannian metric on $$\mathbb{P}^1(\mathbb{C})={\rm SU}(2)/{\rm Stab}_{{\rm SU}(2)}(L).$$

A \emph{marked element} in $\mathbb{P}^1(\mathbb{C})$ is a pair $(L,u)$ where
$u=(a,b)\in \mathbb{C}^2, a\bar{a}+b\bar{b}=1$ and $L=[u]=\{\lambda u\mid \lambda\in \mathbb{C}\}$. Note that this representation is redundant since $u$ determines $L=[u]$.

The group ${\rm SU}(2)$ acts simply transitively on marked elements in 
$\mathbb{P}^1(\mathbb{C})$. 

The involution $L \mapsto L^\perp$ extends to marked elements: map $(L,u)=(L,(a,b))$ first to $u^\perp=(\bar{b},-\bar{a})$ and next to
$(L^\perp,u^\perp)$ with $L^\perp=[u^\perp]$.

A marked element $(L,u)$ determines a path in $\mathbb{P}^1(\mathbb{C})$ by
$L_u:t\in [0,\pi]\mapsto L_u(t)=[\cos(t)u+\sin(t)u^\perp]$, which in fact is a simple closed curve.  Its velocity at $t=0$ is a length $1$ tangent vector $V_u\in T_{[u]} (\mathbb{P}^1(\mathbb{C}))$. Observe that $V_u=V_{-u}$ and $V_{iu}=-V_{u}$. The path $L_u$ lifts to $H^\perp_u:t\in [0,\pi] \mapsto H^\perp_u(t)=\cos(t)u+\sin(t)u^\perp \in S^3$, which is a geodesic from $u$ to $-u$  perpendicular to the foliation on $S^3$ by the Hopf circles $H_v=\{v'\in S^3\mid v'=\lambda v\},\, v\in S^3$. Hopf circles $H_v$ map to points, and geodesics $H^\perp_u$ map to simple closed geodesics in $\mathbb{P}^1(\mathbb{C})$.

 The map $\pm u\in S^3/\{\pm {\rm Id}\}=\mathbb{P}^3(\mathbb{R}) \mapsto V_u\in T(\mathbb{P}^1(\mathbb{C}))$ induces a bijection onto the length $1$ vectors  to $\mathbb{P}^1(\mathbb{C})$. Observe that ${\rm SU}(2)$ acts almost simply transitively on length $1$ tangent vectors to $\mathbb{P}^1(\mathbb{C})$. The quotient group ${\rm PSU}(2)={\rm SU}(2)/\{\pm {\rm Id}\}$ acts simply transitively on length $1$ tangent vectors.

\section{All three planar geometries and hyperbolic 3-space simultaneously} \label{s:Models}

\subsection{A stratification of the Riemann sphere arising from algebra} \label{s:ring}
Bernhard  Riemann was aware of the (Riemann) sphere being the complex plane union a point at infinity. His point of view on complex analysis was very geometric. In this section, we wish 
to describe an incarnation of the Riemann sphere which arises from algebra.\index{Riemann sphere!model}\index{model!Riemann sphere} For more details on this model, see \cite[Chap. 3, \S 3.1]{A2021} and \cite[Chap. 1, \S 8.3]{ACP2012}.

The starting object is the set $\Sigma$ of surjective  ring homomorphisms \index{model!Riemann sphere} from the ring $\mathbb{R}[X]$ of polynomials in one unknown $X$ with real coefficients to a field $F$. On the set $\Sigma$ we introduce two equivalence relations. The first relation, $\sim$, declares $f\colon \mathbb{R}[X]\to F$ and $f'\colon \mathbb{R}[X]\to F'$ to be equivalent if there exists a field isomorphism $\phi\colon F\to F'$ with $f'=\phi\circ f$.

The relation $f\sim f'$ holds if and only if 
the ideals ${\rm kernel}(f),\,{\rm kernel}(f')$ in $\mathbb{R}[X]$ are equal.

The second relation, $\sim_X$, requires $f\sim f'$ and moreover $f(X)=f'(X)$ holds in $\mathbb{R}[X]/{\rm kernel}(f)=\mathbb{R}[X]/{\rm kernel}(f')$.

Up to field isomorphism, there are only three fields, $F$, that are
 hit by a surjective ring homomorphism
$f: \mathbb{R}[X]\to F$, namely, the fields
$\mathbb{C}$, $\mathbb{R}$ and $\mathbb{F}_1$ where
$\mathbb{F}_1$ is the field with one element, that is, the field where $0=1$ holds. The field $\mathbb{F}_1$ corresponds to the ideal $\rho=\mathbb{R}[X]$ consisting of the whole ring, which is prime, maximal but not proper. 

\begin{exercise}
The two fields $\mathbb{R},\, \mathbb{F}_1$ have only the identity as automorphism and the field 
$\mathbb{C}=\{a+bi\mid \,a,b\in\mathbb{R}\}$ only two automorphisms as $\mathbb{R}$-algebra, but as many field automorphisms as the power set of the real numbers.
\end{exercise}

In the following we will describe the quotient sets $\Sigma/\sim,\,\Sigma/\sim_X$ together with natural structures on these sets.

All ideals in $\mathbb{R}[X]$ are principal, that is, any such ideal is generated by a single element (it is obtained by multiplication of such an element by an arbitrary element of the ring).
Kernels of $f\in \Sigma$ are prime ideals, that is, the quotient of  $\mathbb{R}[X]$ by such an ideal is an integral domain (the product of any two nonzero elements is nonzero).
Thus, we have three kinds of kernels of $f$, namely,
$\rho=(1)=\mathbb{R}[X]$, $(X-a), a\in \mathbb{R}$, and
$((X-a)^2+b^2),a,b\in \mathbb{R},b>0$. Therefore the set
$\Sigma/\sim$ is identified with $\mathbb{C}_+\cup \mathbb{R}\cup \{\rho\}$. 
Here we use the notation $\mathbb{C}_\pm=\{a+bi\mid a,b\in \mathbb{R},\pm b>o\}$ for the upper/lower half planes.

The kernel of the ring homomorphism $f$ is not sufficient in order to describe its class in $\Sigma/\sim_X$ if ${\rm kernel}(f)=((X-a)^2+b^2)$. One needs moreover to specify a root $a+bi\in \mathbb{C}_+$ or $a-bi\in \mathbb{C}_-$. Thus the set $\Sigma/\sim_X$ is a disjoint union of $4$ strata $\Sigma/\sim_X=\mathbb{C}_+\cup\mathbb{C}_-\cup\mathbb{R}\cup \{\rho\}$.

The fields $\mathbb{C},\mathbb{R}, \mathbb{F}_1=\{0\}$ are realized as sub-$\mathbb{R}$-algebras in $\mathbb{C}$, so an alternative description of the set $\Sigma/\sim_X$ is the set of $\mathbb{R}$-algebra homomorphism from $\mathbb{R}[X]$ to $\mathbb{C}$.

 We shall see that the set $R=\Sigma/\sim_X$ and its strata carry a rich panoply of structures. The set $R=\mathbb{C}_+\cup\mathbb{C}_-\cup\mathbb{R}\cup \{\rho\}$  is identified with the Riemann sphere $\mathbb{C}\cup \{\infty\}$.\index{Riemann sphere!model}\index{model!Riemann sphere} Structures, such as the Chasles three point function (defined below) on $\mathbb{R}\subset R$,  or the hyperbolic geometry on $\mathbb{C}_+$, will appear naturally. Naturally means here that the construction that leads to the structure commutes with 
 the $\mathbb{R}$-algebra automorphisms of $\mathbb{R}[X]$. For instance, it commutes with the substitutions that consist in translating $X$ to $X-t,\, t\in \mathbb{R},$ or with stretching $X$ to $\lambda X,\, \lambda\in \mathbb{R}^*$.
The ideal $(X-a)$ maps to the ideal $(X-a-t)$ by translation and to $(X-\frac{a}{\lambda})$ by stretching. 
 
A first example is the Chasles $3$-point function\index{Chasles $3$-point function} $\mathrm{Ch}(A,B,C)$ on the stratum 
$\mathbb{R}$ 
 consisting of the ideals $ (X-a),a\in \mathbb{R},$ defined as follows: Given three distinct such points, $A=(X-a),B=(X-b),C=(X-c)$, define
 $\mathrm{Ch}(A,B,C)=\frac{b-a}{c-a}$. 
 
 In words, $\mathrm{Ch}(A,B,C)$ is the ratio of the monic generators of $B$ and $C$ evaluated at the zero of the monic generator of $A$. 
 
The next example is the $4$-point function cross ratio\index{cross ratio} $\mathrm{cr}(A,B,C,D)$: for $4$ distinct points $A=(X-a),B=(X-b),C=(X-c),D=(X-d)$, define $\mathrm{cr}(A,B,C,D)=\mathrm{Ch}(A,B,C)\mathrm{Ch}(D,C,B)=\frac{b-a}{c-a}.\frac{c-d}{b-d}$. In words, this is Chasles evaluated at the first three points times Chasles evaluated at the last three points in the reverse order. It is truly a remarkable fact that the cross ratio function extends to a $4$-point real function on $\mathbb{P}^1(\mathbb{R})$ and to a $4$-point complex function on $\mathbb{P}^1(\mathbb{C})=\mathbb{C}\cup \{\rho\}$ if one transfers the above wishful calculus with $w=\infty$ to $\rho$.

The multiplicative monoid $\mathbb{R}^*[X]$ of polynomials which do not vanish at $0$ has also automorphisms that do not directly fit with the interpretation as polynomials with unknown $X$. In particular, they are not  ring automorphisms, but monoid automorphisms. A main example is the \emph{twisted palindromic symmetry}:\index{twisted palindromic symmetry} 
perform on a polynomial $P(X)$ the substitution $X \to  \frac{-1}{X}$, followed by stretching with factor $(-X)^{{\rm degree}(P)}$. (The palindromic symmetry\index{palindromic symmetry} is said to be twisted, because of the minus signs.) Then the ideal $(X-a)$ maps to the ideal $(-X(\frac{-1}{X}-a))=(1+aX)=(X+\frac{1}{a})$. The Chasles function restricted to $\mathbb{R}^*$ does not commute with the symmetry $a\mapsto \frac{-1}{a}$, but the cross ratio commutes. (This property is among the ones that make the cross ratio more natural than the Chasles $3$-point function.)\index{Chasles $3$-point function} This symmetry, which is an involution, extends to a fixed point free involution $\sigma_\mathbb{P}$ of $\mathbb{R}\cup \{\rho\}=\mathbb{P}^1(\mathbb{R})$.  Remarkably, the  symmetry $\sigma_\mathbb{P}$ commutes with the cross ratio $\mathrm{cr}$.
In this sense, $\mathrm{cr}$ is more natural than $\mathrm{Ch}$.

The above operations of real translation and stretching, i.e., substituting $X-t$ for $X$ or $\lambda X$ for $X$ with $t\in \mathbb{R},\,\lambda\in \mathbb{R}^*$, together with the twisted palindromic symmetry induce bijections of the set $\{(X-u)(X-\bar{u})\mid u\in \mathbb{C}_+\}$ of monic polynomials of degree $2$ without real roots. Composing these bijections
generates a group $G$. It is a remarkable fact that this group is, as an abstract group, isomorphic to the group
${\rm PGL}(2,\mathbb{R})$. It is also a remarkable fact that the abstract group ${\rm PGL}(2,\mathbb{R})$ carries a unique structure of Lie group. So there is also a topology on $G$, which allows us to define the subgroup 
$G_0\subset G$ as the connected component of the 
neutral element in the Lie group $G={\rm PGL}(2,\mathbb{R})$. The group $G_0$ is isomorphic to the group ${\rm PSL}(2,\mathbb{R})$. 

The fixed point free involution $\sigma_\mathbb{P}$
on $\mathbb{P}^1(	\mathbb{R})=\mathbb{R}\cup \{\rho\}=\partial\bar{C}_+$ extends to $\mathbb{C}_+$ by putting
$\sigma_\mathbb{P}(u)=\frac{-1}{u}$ for an involution with $i$ as unique fixed point. 

The group $G_0$ acts transitively and faithfully on the above strata $\mathbb{C}_\pm$ and on $\mathbb{R}\cup \{\rho\}$. From this action one gets a topology on the strata and also, as we will explain, a geometry on 
$\mathbb{C}_+$. It is also remarkable that this geometry, in fact, the planar hyperbolic geometry, can also  be explained in a more elementary way in term of the interpretation as ideals.

The action of $G_0$ on $\mathbb{C}_+$ is the so-called modular action\index{modular action}  of ${\rm PSL}(2,\mathbb{R})$ on $\mathbb{C}_+$. Thinking of an element $g\in G_0$ as a real $2\times 2$ matrix of determinant $1$ up to sign, $\pm( \begin{smallmatrix}
a & b \\
c & d 
\end{smallmatrix} )$, the action on $u\in \mathbb{C}_+$ is given by $(g,u)\mapsto \frac{au+b}{cu+d}$. 

The modular action of $G_0$ on $\mathbb{C}_+$ extends to the projective action of $G={\rm PGL}(2,\mathbb{R})$ on $\partial \bar{\mathbb{C}}_+=\mathbb{P}^1(\mathbb{R})$ and also to the projective action of the complex group ${\rm PGL}(2,\mathbb{C})$ on $\mathbb{P}^1(\mathbb{C})$.

The hyperbolic geometry on $\mathbb{C}_+$ can also be defined in terms of ideals in the following rather elementary way. 

Points are the elements of $\mathbb{C}_+$. The monic generator of the corresponding ideal may be denote by $P_u(X)= (X-u)(X-\bar{u})$ . 

The notion of line is introduced using convex combinations.
More precisely, given two distinct points $u,v$, the line $L_{u,v}$ through them is defined as follows: 

Denote by
$P_{t,u,v}(X)=tP_u(X)+(1-t)P_v(X),\, t\in [0,1]$ the convex combination of the polynomials $P_u(X),P_v(X)$. Note first that any polynomial obtained in this way is monic. Let
$I_{u,v}=]m_{u,v},M_{u,v}[$ be the maximal interval on which $P_{t,u,v}(X)$ has no real roots. Note that $[0,1]\subset I_{u,v}\not= \mathbb{R}$ (one may start by checking that for $t=0$ and $t=1$ there are no real roots).
This implies that the cross ratio $\delta(u,v)=\mathrm{cr}(m_{u,v},1,0,M_{u,v})$ is a positive real number $>1$. If ${\rm Re}(u)={\rm Re}(v)$ define $L_{u,v}=\{w\in \mathbb{C}_+\mid {\rm Re}(w)={\rm Re}(u)={\rm Re}(v)\}$. If ${\rm Re}(u)\not={\rm Re}(v)$ let $c\in \mathbb{R}$ be the zero of
$P_u(X)-P_v(X)$ and define $L_{u,v}=\{w\in \mathbb{C}_+\mid |w-c|^2=P_u(c)\}$. 

With such a definition, the axiom of hyperbolic geometry saying that for any two distinct points there is a unique line passing through them is trivially satisfied.

\begin{exercise}
For a better understanding of this construction of the hyperbolic plane, please check that the root of $P_{t,u,v}(X)$ travels along  $L_{u,v}$ for $t\in I_{u,v}$ in case ${\rm Re}(u)\not={\rm Re}(v)$. Note that $L_{u,v}$ is a half circle in $\mathbb{C}$ with centre 
in $\mathbb{R}\cup \{\rho\}$.
\end{exercise}
The hyperbolic Riemannian metric $g_u$ is the Hessian at $u$ of the function $v\in \mathbb{C}_+ \mapsto D(u,v)^2$.

In this model, we can
define the hyperbolic distance between $u$ and $v$ as $D(u,v)=\frac{1}{2}\log(\delta(u,v))$. We can make a relation with the model of the hyperbolic plane that uses the Hilbert metric.
In this way we have all the ingredients of hyperbolic geometry based on elementary algebra, that can be taught at high-school.

\begin{remark} The above twisted palindromic map $\sigma\colon \mathbb{R}[X]\to \mathbb{R}[X]$
induces an involution on $\mathbb{R}^*[X]$, the complement of the hypersurface of polynomials that vanish at $0$. 
Therefore, $\sigma$ is a birational involution and the above constructions would generate subgroups $G$, etc. in the Cremona group of birational transformations of $\mathbb{P}^1(\mathbb{C})$. Thus, the artefact of defining $\sigma_\mathbb{P}$ can be avoided. Even better, the geometry of Cremona groups is hyperbolic. For the Cremona group, see  \cite{Blanc, Blanc1, Cantat, Deserti, Serre, UZ, Z}.
\end{remark}

\subsection{ A note on the field with one element}

The importance of the field with one element\index{field with one element} $\mathbb{F}_1$, which was interpreted in the above construction of the Riemann sphere as the point at infinity of the complex plane, was first highlighted by Jacques Tits in his paper \cite{Tits}. In this paper, Tits proposed the development of a   geometry over the field $\mathbb{F}_1$  which would be the limit of geometries over the finite fields $\mathbb{F}_q$, where $\mathbb{F}_q$ denotes the field of cardinality $q=p^n$ where $p$ is a positive prime. Note that  if we make $n$ tend to $0$ in the notation $\mathbb{F}_q$, $q=p^n$, we obtain $\mathbb{F}_1$, which is an indication of the fact that the field  $\mathbb{F}_q$ may be considered as a deformation of the field $\mathbb{F}_1$. One may also make an analogy with the fact that a limit $n\to 0$ is used in quantum topology  for a geometric interpretation of the TQFT calculus of Turaev, Viro, Kauffman, etc. (The analogy is vague, but this is the nature of analogies.) 

After Tits introduced his idea of studying  a geometry over the field $\mathbb{F}_1$, many works were done on this theme. For instance, 
Manin, in his lectures on the zeta function \cite{Manin}, proposed the study of a ``Tate motive over a one-element field". In the paper \cite{Connes} titled \emph{Fun with  $\mathbb{F}_1$} by Connes, Consani and Marcolli, the field $\mathbb{F}_1$ becomes an actor in an approach to the Riemann hypothesis.

\subsection{The Riemann sphere and shadow numbers} \label{s:shadow}

Let\index{shadow number} $V$ be a $2$-dimensional vector space over a field $k$.  If $k$ has $3$ or more elements, then the space $\mathbb{P}^1(k)$ of lines through the origin in $V$ has at least $4$ elements. 
We wish to define in a geometric way the cross ratio of
a figure $L_1,L_2,L_3,L_4$ consisting of $4$ distinct lines in $V$. This is done in the form of two exercises.

\begin{exer}\label{ex:1}
Do the following: Identify $V=L_1\oplus L_4$ with the Cartesian product of $L_1$ and $L_4$, and let $\phi_i$ be the linear map from $L_1$ to $L_4$ that has $L_i$ as graph, $i=2,3$. Define
$\mathrm{cr}(L_1,L_2,L_3,L_4)$ as the stretch factor of the linear map $\phi_3^{-1}\circ \phi_2\colon L_1 \to L_1$. This is represented in Figure \ref{cross1}, in which we start with the point $X$ on $L_1$, with $Y$ its image by $\phi_2$ and $Z$ the image of $Y$ by $\phi_3^{-1}$.

Please check now that if the lines are defined using a basis
$e,f$ and ``numbers" $a_i\in k\cup \{\infty\},i=1,2,3,4$ such that $e+a_if\in L_i$, then the formula
$\mathrm{cr}(L_1,L_2,L_3,L_4)=\frac{a_2-a_1}{a_3-a_1}\frac{a_4-a_3}{a_4-a_2}$ holds.

In this way, we recover the usual formula for the cross ratio.
\end{exer}

   \begin{figure}
\begin{center}
\includegraphics[width=12 cm]{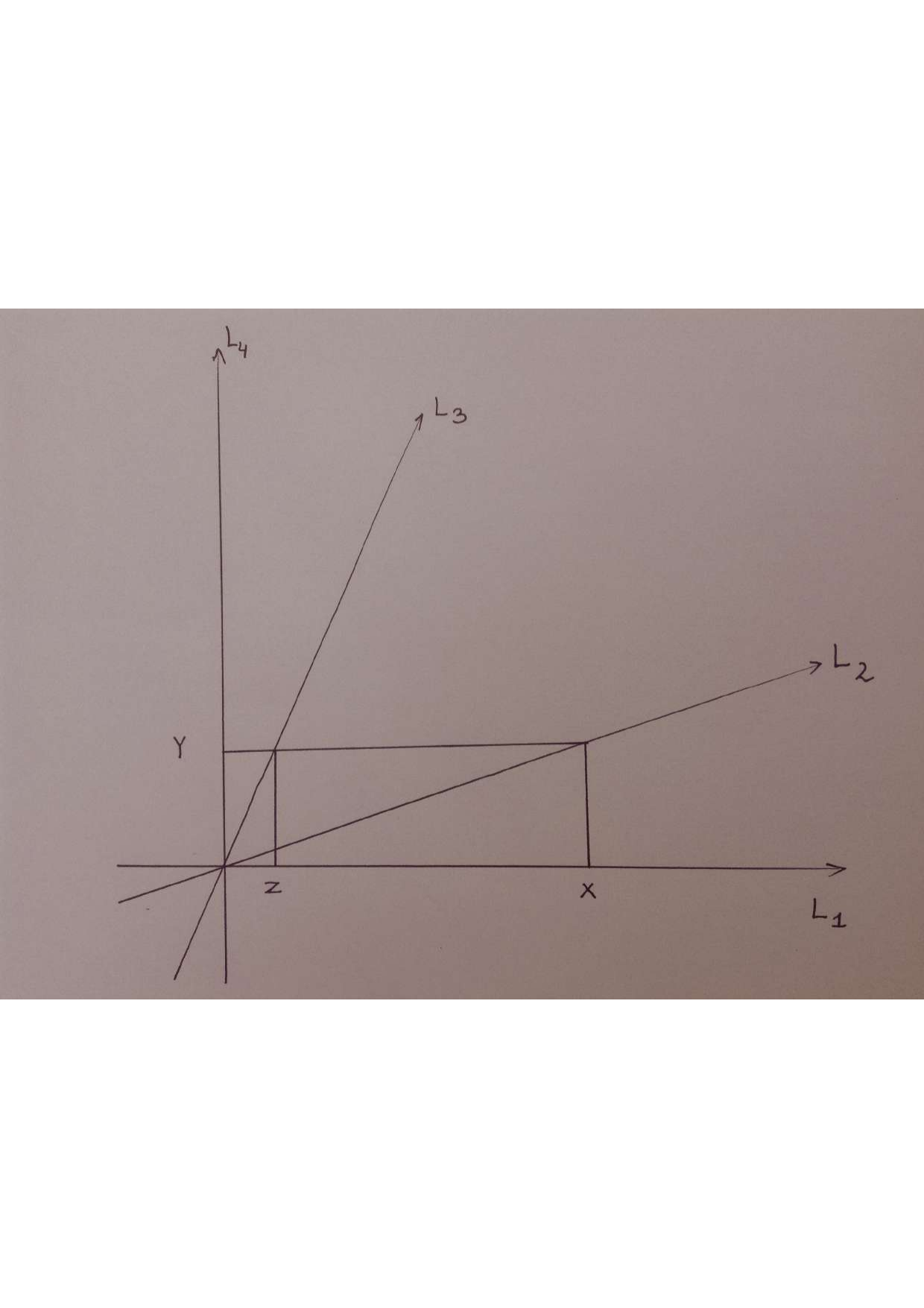} 
\caption{Figure for Exercise \ref{ex:1}}
\label{cross1}
\end{center}
\end{figure}

In the above construction of the cross ratio, the use of a Cartesian product  and graphs of maps suggest that parallel lines are essential. This is not the case. For instance, the construction also works
inside a Euclidean triangle with vertices $ABC$, as in the following exercise, which asks for a construction of the cross ratio which is a projective version of the construction in Exercice \ref{ex:1}, which is affine (it uses the notion of parallels):

\begin{exer}\label{ex:2}
Let $f_2,f_3$ be interior points of the side $BC$ of a triangle $ABC$, put $f_1=B,\,f_4=C$. Let
$L_i,\, i=1,2,3,4,$ be the segments connecting the vertex $A$ with $f_i$. 

Define $\phi_2\colon L_1\to L_4$ as follows: Connect $X\in L_1$ by a segment with $C$ which insects $L_2$ in $X'$. The half-line
$[B,X')$ intersects $L_4$ in $Y=\phi_2(X)$. Define  
in the same way $\phi_3$. Now $\phi_3^{-1}\circ \phi_2\colon L_1 \to L_1$ is not linear, but with fixed point $A$. Define the cross ratio $\mathrm{cr}(L_1,L_2,L_3,L_4)$ to be the stretch factor of the differential at $A$ of the map $\phi_3^{-1}\circ \phi_2$. The construction is represented in Figure \ref{cross2}, where, like in  Figure \ref{cross1},  $X$ is a point in $L_1$,  $Y$ its image by $\phi_2$ and $Z$ the image of $Y$ by $\phi_3^{-1}$.

 \end{exer}

   \begin{figure}
\begin{center}
\includegraphics[width=12 cm]{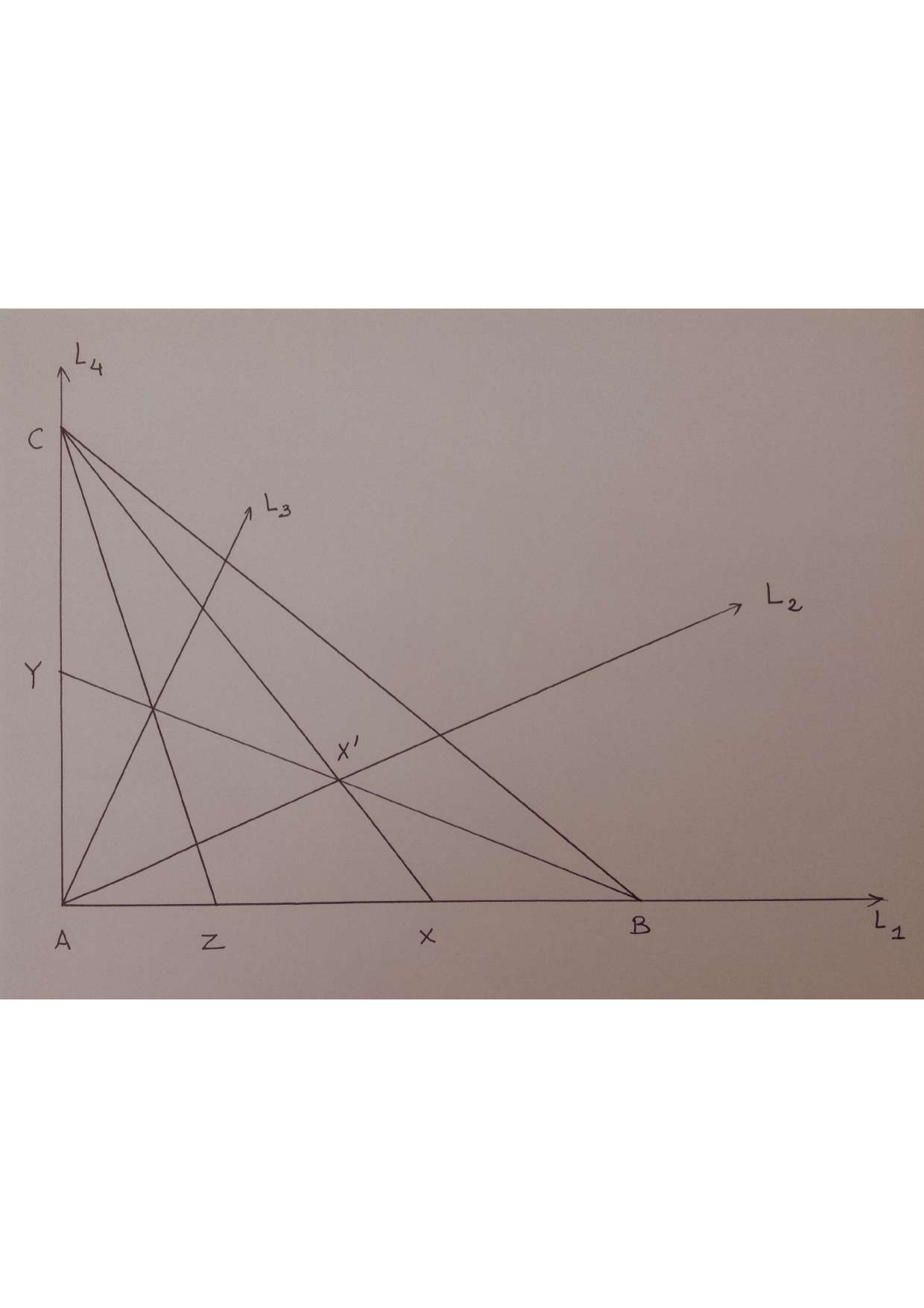} 
\caption{Figure for Exercise \ref{ex:2}}
\label{cross2}
\end{center}
\end{figure}

It is important to observe that $\mathrm{cr}(L_1,L_2,L_3,L_4)$,  in planar Euclidean geometry,
does not depend upon the position of the side $BC$. This property does not hold in spherical or hyperbolic geometry. In spherical geometry there is a preferred choice for the side $BC$, namely, such that the triangle becomes bi-orthogonal. This was already done by Menelaus of Alexandria!\index{Menelaus theorem}

This construction also works for a configuration in general position of $4$ linear subspaces $E_1,E_2,E_3,E_4$ of dimension $2$ in a real vector space $V$ of dimension $4$. The invariant is now an element  $\Lambda(E_1,E_2,E_3,E_4)\in {\rm GL}(E_1)$. One may use this to prove the following:

\begin{theorem} \label{th:four}  {\bf The four Complex Lines Theorem.}  Let $A=(E_1,E_2,E_3,E_4)$ be a generic labelled configuration of planes in a real $4$-dimensional vector space $V$.
There exists a linear complex structure $J\colon V\to V$ with $J(E_j)=E_j$ (which means that each $E_j$ is a complex line)  for every $i=1,2,3,4$ 
if and only if 
$${\rm Trace}(\Lambda(A))^2<4{\rm Det}(\Lambda(A))$$ 
or if 
$\Lambda(A)$ is a multiple $\lambda {\rm Id}_{E_1}$ of the identity ${\rm Id}_{E_1}$, for some $\lambda\not= 1$.
\end{theorem}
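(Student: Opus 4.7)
The plan is to reduce the statement to an algebraic question about a single linear map $\Lambda := \Lambda(A) \in \mathrm{GL}(E_1)$. First I recall the construction of $\Lambda$: by the genericity of $A$, one has $V = E_1 \oplus E_4$ and each of $E_2, E_3$ is transverse to both $E_1$ and $E_4$, so there are invertible linear maps $\phi_2, \phi_3 \colon E_1 \to E_4$ whose graphs are $E_2, E_3$ respectively; then $\Lambda = \phi_3^{-1} \circ \phi_2$. Suppose now a complex structure $J \colon V \to V$ preserves each $E_j$. Since $J$ preserves $E_1$ and $E_4$, in the decomposition $V = E_1 \oplus E_4$ it splits as $J = J_1 \oplus J_4$ with $J_j^2 = -\mathrm{Id}_{E_j}$. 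Preservation of the graph of $\phi_i$ is equivalent to the intertwining identity $J_4 \phi_i = \phi_i J_1$ for $i = 2, 3$, giving $J_4 = \phi_i J_1 \phi_i^{-1}$ for both indices and therefore $\Lambda J_1 = J_1 \Lambda$.

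Next I analyze when a complex structure $J_1$ on the two-dimensional real plane $E_1$ can commute with $\Lambda$. If $\Lambda$ is not a scalar multiple of $\mathrm{Id}_{E_1}$, its centralizer in $\mathrm{End}(E_1)$ is exactly the two-dimensional subalgebra $\mathbb{R}[\Lambda] = \mathbb{R}\,\mathrm{Id} + \mathbb{R}\Lambda$, so $J_1$ must have the form $a\,\mathrm{Id} + b\Lambda$ with $b \neq 0$ (the case $b = 0$ would force $a^2 = -1$). Applying the Cayley--Hamilton identity $\Lambda^2 = \mathrm{Tr}(\Lambda)\Lambda - \mathrm{Det}(\Lambda)\,\mathrm{Id}$, the equation $J_1^2 = -\mathrm{Id}$ becomes the system
\begin{align*}
a^2 - b^2\,\mathrm{Det}(\Lambda) &= -1, \\
2ab + b^2\,\mathrm{Tr}(\Lambda) &= 0.
\end{align*}
Eliminating $a = -\tfrac{1}{2} b\,\mathrm{Tr}(\Lambda)$ from the second equation and substituting into the first yields $b^2\bigl(4\,\mathrm{Det}(\Lambda) - \mathrm{Tr}(\Lambda)^2\bigr) = 4$, which admits a real solution for $b$ if and only if $\mathrm{Tr}(\Lambda)^2 < 4\,\mathrm{Det}(\Lambda)$. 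In the scalar case $\Lambda = \lambda\,\mathrm{Id}$ this relation degenerates to equality and must be treated separately: any complex structure on $E_1$ commutes with $\Lambda$, and $\lambda = 1$ is impossible since it would force $\phi_2 = \phi_3$ and hence $E_2 = E_3$ (contradicting genericity), while $\lambda \neq 0$ is automatic from $\Lambda \in \mathrm{GL}(E_1)$. Thus the scalar case contributes exactly when $\lambda \neq 1$.

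For sufficiency, given $\Lambda$ meeting one of the two listed conditions I construct $J_1$ as above, set $J_4 := \phi_2 J_1 \phi_2^{-1}$, and define $J := J_1 \oplus J_4$ on $V = E_1 \oplus E_4$. The commutativity $\Lambda J_1 = J_1 \Lambda$ rearranges into $\phi_2 J_1 \phi_2^{-1} = \phi_3 J_1 \phi_3^{-1}$, so the intertwining identity holds for both $\phi_2$ and $\phi_3$ and hence $J$ preserves all four planes; the identity $J^2 = -\mathrm{Id}_V$ is automatic from $J_j^2 = -\mathrm{Id}_{E_j}$. The main obstacle in the argument is the algebraic step in the middle: extracting the precise inequality $\mathrm{Tr}(\Lambda)^2 < 4\,\mathrm{Det}(\Lambda)$ from the system obtained via Cayley--Hamilton, and recognising that on the boundary locus $\mathrm{Tr}(\Lambda)^2 = 4\,\mathrm{Det}(\Lambda)$ the two cases (genuinely scalar $\Lambda$ versus $\Lambda$ with a non-trivial Jordan block) behave oppositely with respect to the existence of $J$, which is exactly what forces the statement to split into two clauses.
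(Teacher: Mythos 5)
Your proof is correct and complete: the splitting $J = J_1 \oplus J_4$ forced by invariance of $E_1$ and $E_4$, the intertwining relations yielding $\Lambda J_1 = J_1 \Lambda$, the centralizer computation via Cayley--Hamilton giving $b^2\bigl(4\,\mathrm{Det}(\Lambda)-\mathrm{Trace}(\Lambda)^2\bigr)=4$, and the exclusion of $\lambda=1$ by genericity (since $\Lambda=\mathrm{Id}$ would force $E_2=E_3$) are all sound, including the observation that on the boundary locus a non-scalar $\Lambda$ admits no commuting complex structure while a scalar one admits all of them. The paper itself gives no proof of this theorem (it defers to p.~73 of \cite{A2021}), but your argument develops exactly the construction the paper sets up, namely $\Lambda(A)$ as the matrix cross ratio $\phi_3^{-1}\circ\phi_2$ on $E_1$, so this is essentially the intended approach.
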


The proof is contained in p. 73 of \ref{A2021}

 Theorem \ref{th:four} should be seen in  the setting of the following general question:
 Given a $4$-dimensional real vector space $V$ and a quadruple of 2-dimensional planes $(E_1,E_2,E_3,E_4)$ in $V$ satisfying some properties, does there exist an almost complex structure $J:V\to J$ making $(V,J)=\mathbb{C}^2$ such that $(E_1,E_2,E_3,E_4)$ are complex lines?

The usual formula for the cross ratio with numbers is memo-technically speaking a headache. A more geometrically-rooted name would be more satisfying. We propose {\it shadow number}:\index{shadow number} the shadow issued from a light bulb of four concurrent lines lying in a plane on a plane shows the same number. The proof uses the above special property in Euclidean geometry and becomes simplified if one uses the above geometric definition. If the name should remember a person, then perhaps Leonardo da Vinci number,\index{Leonardo da Vinci} for Leonardo  studied central projections and perspectivities\index{perspectivity}  about $500$ years ago \cite{D}, or Menelaus number,\index{Menelaus number} for Menelaus studied shadows in spherical geometry about\index{Menelaus of Alexandria} $2000$ years ago. See 
 \cite{2012-Menelaus1} for the use  by Menelaus of the invariance of the cross ratio in spherical geometry. Menelaus used this result in the proof of Proposition 71 of the \emph{Spherics},\footnote{Menelaus was extremely concise in his \emph{Spherics}, and  the proofs of some of the propositions in this work are very difficult to follow. This is why several results of the \emph{Spherics} \cite{RP2018} were later explained and commented on by Arab mathematicians of the Middle Ages, after the Greek mathematical schools were desintegrated. Regarding this particular proposition, see the two papers \cite{2012-Menelaus1, 2012-Menelaus2}} and later medieval commentators, in an effort to provide full proofs of some of this and other difficult propositions in Menelaus'  \emph{Spherics}, highlighted this invariance as a new proposition, in order to explain a proof in the \emph{Spherics};\index{Menelaus of Alexandria!\emph{Spherics}} see Proposition 3.2 in \cite{2012-Menelaus1} and \cite[p. 356-360]{RP2018}.

The group ${\rm PGL}(2,\mathbb{C})$ of linear transformations of $\mathbb{C}^2$ acts on the Riemann sphere $R$ (see \S \ref{s:ring}), since this group acts on complex lines through the origin of $\mathbb{C}^2$. From the geometric definition of the shadow number,\index{shadow number} it follows that this number is ${\rm PGL}(2,\mathbb{C})$-invariant:
$$\lambda_{pqts}=\lambda_{Ap,Aq,At,As},\,A\in{\rm PGL}(2,\mathbb{C}).$$ 

The shadow number is a $4$-point function defined on the complement of the general diagonal in the Riemann sphere $R$:
$$\lambda:R^4\setminus {\rm Diag(R)}\to \mathbb{C}$$
where ${\rm Diag(R)}$ is the subset of quadruples $(p,q,t,s)\in R^4$ of points in $R$ with $\#\{p,q,t,s\}<4$. Using the above formula one checks that the function $\lambda$ is holomorphic with meromorphic extension to $R^4$.  

\begin{exer}
For $p,q,t$ a triple of distinct points, study the partial function $f:s\mapsto \lambda_{pqts}$. What are the level sets of $f,|f|,f\pm\bar{f}$?
\end{exer}

\begin{exer}
Reconstruct the complex structure $J$ of $R$ from the shadow function\index{shadow function} $\lambda$.
\end{exer}

\begin{exer}
For a $4$ point function $f:R^4\setminus {\rm Diag(R)}\to \mathbb{C}$, define ${\rm Aut}(R,f)$ to be the group of bijections $A:R\to R$ satisfying $f\circ A=f$.
Consider the cases $f=\lambda$ or $f=|\lambda|$ or $f=\lambda\pm\bar{\lambda}$.
Show  that
${\rm PGL}(2,\mathbb{C})={\rm Aut}(R,\lambda)$ and $ {\rm Aut}(R,|\lambda|)={\rm Aut}(R,\lambda\pm\bar{\lambda})$.
\end{exer}

\subsection{$J$-compatible metrics} \label{s:compatible}

In this section, $(S,J)$ is a differentiable compact connected surface equipped with a complex structure $J:TS\to TS$. A Riemannian metric $g$ on $S$ is called conformal\index{J@$J$-compatible metric}\index{J@$J$-conformal metric} with $J$ if for every point 
$p\in S$ the map $J_p:T_pS\to T_pS$ is $g_p$-orthogonal. Thus, two metrics\index{J@$J$-conformal metric}\index{J@$J$-compatible metric} $g, g'$ conformal with $J$ differ pointwise by a positive factor: $g'=fg$ for some function $f$.
A  $J$-calibrated\index{J@$J$-calibrated volume form}\index{volume form!J@$J$-calibrated} volume form $\omega$ on $S$, i.e.,  a differential 2-form satisfying $\omega_p(u,J_pu)>0,\, p\in S, \, u\in T_pS,\, u\not= 0,$ gives by $g_p(u,v)=\omega_p(u,J_pv)$ a Riemannian metric conformal with $J$ whose associated volume form is precisely $\omega$. All metrics $g$ conformal with $J$ are obtained in a unique way from such a construction by taking for $\omega$ the oriented volume form $\omega(u,Ju)=g(u,u)$ of the Riemannian metric $g$.

In conclusion, given $J$, the space of Riemannian metrics\index{J@$J$-conformal metric}\index{J@$J$-compatible metric} conformal with $J$ is parametrized by the cone of $J$-calibrated volume forms.\index{volume form!J@$J$-calibrated}\index{J@$J$-calibrated volume form}  In this way, the uniqueness is not only up to multiplying by a constant factor. 

We wish to strengthen the notion of being ``conformal with $J$"\index{J@$J$-compatible metric}\index{J@$J$-conformal metric} for metrics and gain back uniqueness or controlled non-uniqueness up to multiplying by a constant factor. This will be achieved  first for Riemann surfaces $(S,J)$ that are homogeneous, i.e., whose group ${\rm Aut}(S,J)$ acts transitively on $S$, such that moreover the stabilizers of points are compact.

The group ${\rm Sim}(M,g)$\index{similarity group} of a connected Riemannian manifold is the group of diffeomorphisms that multiply the metric by a constant factor. Such diffeomorphisms are called similarities.\index{similarity}

\begin{theorem} Let $(S,J)$ be a homogeneous Riemann surface with commutative stabilizers. Then there exists a Riemannian metric $g$ conformal with\index{J@$J$-conformal metric}\index{J@$J$-compatible metric} $J$ such that
$\mathrm{Sim}^+(S,g)=\mathrm{Aut}(S,J)$. The Riemannian metric $g$ is unique up to multiplication by a constant factor.
\end{theorem}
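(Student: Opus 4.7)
The plan is to construct $g$ by transporting a single inner product at a base point using the transitive action of $G:=\mathrm{Aut}(S,J)$, and then to read off both the equality $\mathrm{Sim}^+(S,g)=G$ and the uniqueness of $g$ from the isotropy representation. First I would fix $p_0\in S$ and study $K:=\mathrm{Stab}_G(p_0)$: every $\phi\in K$ is holomorphic and fixes $p_0$, so its differential $(d\phi)_{p_0}$ is $\mathbb{C}$-linear on the complex line $T_{p_0}S$, i.e., a nonzero complex scalar, and the resulting representation of $K$ is injective (a holomorphic self-map of a connected Riemann surface is determined by its $1$-jet at a fixed point). The standing hypotheses that $K$ be compact and commutative then force its image to lie in the rotation group $S^1\subset\mathbb{C}^{*}$. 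Choose any Euclidean inner product $g_{p_0}$ on $T_{p_0}S$ for which $J_{p_0}$ is orthogonal (unique up to a positive scalar) and transport it by the action: for $p\in S$ and any $\phi\in G$ with $\phi(p_0)=p$, set
\[
g_p(u,v):=g_{p_0}\!\bigl((d\phi)_{p_0}^{-1}u,(d\phi)_{p_0}^{-1}v\bigr).
\]
The $K$-invariance of $g_{p_0}$ (rotations preserve any $J$-compatible inner product) makes this independent of the choice of $\phi$, and one obtains a smooth $G$-invariant $J$-conformal Riemannian metric.

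Next I would verify the two inclusions making up $\mathrm{Sim}^+(S,g)=G$. The inclusion $G\subseteq\mathrm{Sim}^+(S,g)$ is automatic since by construction $G$ acts by isometries, hence by positive similarities. For the converse, an $f\in\mathrm{Sim}^+(S,g)$ rescales $g$ by a positive constant and hence preserves the conformal class of $g$; in real dimension two, an orientation-preserving diffeomorphism that preserves a conformal structure is holomorphic with respect to the corresponding $J$, by Riemann's characterization of holomorphy as $\mathbb{C}$-linearity of the differential. Thus $f\in\mathrm{Aut}(S,J)=G$.

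For uniqueness, I would take a second $J$-conformal metric $g'$ with $\mathrm{Sim}^+(S,g')=G$ and write $g'=h\,g$ for a smooth positive $h$. The similarity relation $\phi^{*}g'=\lambda(\phi)\,g'$ becomes the functional equation $h\circ\phi=\lambda(\phi)\,h$, and taking logarithmic differentials yields $\phi^{*}(d\log h)=d\log h$, so $d\log h$ is a $G$-invariant $1$-form on $S$. By transitivity it is determined by its value at $p_0$, which must lie in the $K$-fixed subspace of $T_{p_0}^{*}S$; but $K$ acts on $T_{p_0}S\cong\mathbb{C}$ by a nontrivial subgroup of rotations, so that fixed subspace is $0$. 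Hence $d\log h\equiv 0$, $h$ is a positive constant, and $g'$ is a scalar multiple of $g$. The step I expect to be most delicate is exactly this last one: it presupposes that $K$ acts nontrivially on $T_{p_0}S$, which holds in the principal examples (the round sphere and the hyperbolic plane); in a degenerate case $K=\{1\}$ one would instead have to argue that the character $\lambda:G\to\mathbb{R}_{>0}$ is trivial, which is true in the relevant examples because $G$ coincides with its own commutator subgroup. The remainder is essentially bookkeeping around the fact that a $J$-compatible inner product on a complex line is unique up to a positive scalar and that a $G$-invariant tensor on a homogeneous space is pinned down by its value at one point.
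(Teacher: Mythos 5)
Your proposal is correct in substance, but it takes a genuinely different route from the paper. The paper's proof is a case analysis: it invokes the classification of homogeneous connected Riemann surfaces (sphere, $\mathbb{C}$, $\mathbb{C}^*$, elliptic curves, $\mathbb{C}_+$), discards the sphere via the stabilizer hypothesis, and then exhibits an explicit metric in each remaining case --- the flat cylinder metric $g_z(u,u)=u\bar{u}/z\bar{z}$ on $\mathbb{C}^*$, flat metrics on tori, and the Killing-form (hyperbolic) metric on $\mathbb{C}_+=\mathrm{PSL}(2,\mathbb{R})/\mathrm{PSO}(2)$ --- verifying in each case the triple equality $\mathrm{Sim}=\mathrm{Iso}^+=\mathrm{Aut}$; uniqueness is essentially asserted case by case. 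You instead give a uniform, classification-free argument: transport an isotropy-invariant inner product from a base point, use Riemann's characterization of holomorphy ($\mathbb{C}$-linearity of the differential) to get $\mathrm{Sim}^+(S,g)\subseteq\mathrm{Aut}(S,J)$, and derive uniqueness from the $G$-invariance of the $1$-form $d\log h$ killed by a nontrivial isotropy rotation. Your route avoids uniformization and the classification list entirely and, notably, supplies a single uniqueness argument where the paper only inspects examples; the paper's route buys the explicit metrics and the identification of which surfaces actually occur.

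Two caveats, both repairable. First, your construction genuinely uses compactness of the stabilizer, which is \emph{not} in the statement (only commutativity is); it is, however, imposed in the paper's sentence introducing the theorem, so your reading matches the intent. The distinction matters: $S=\mathbb{C}$ has commutative but noncompact stabilizer $K\cong\mathbb{C}^*$, admits no $\mathrm{Aut}$-invariant metric at all (the isotropy image is all of $\mathbb{C}^*$, not $S^1$), and yet satisfies the conclusion with the Euclidean metric, for which $\mathrm{Iso}$ is a \emph{strict} subgroup of $\mathrm{Sim}$ --- a case your transport method can never produce, and which the paper's sketch also sets aside. Second, your parenthetical justification that a holomorphic self-map of a connected Riemann surface is determined by its $1$-jet at a fixed point is false in general: parabolic elements of $\mathrm{PGL}(2,\mathbb{C})$ fix a point of the sphere with derivative $1$. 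It holds for every surface actually in play (lift to $\mathbb{C}$ or $\mathbb{D}$ and use affinity, respectively the Schwarz lemma), and for the construction you only need compactness of $K$ to force the isotropy image into $S^1$; once the invariant $g$ exists, faithfulness follows since isometries are determined by $1$-jets. For uniqueness you need $K$ to act by a nontrivial rotation, which does hold in all cases (on $\mathbb{C}^*$ and on tori the stabilizer contains an involution acting as $-\mathrm{Id}$); note, though, that your fallback for $K=\{1\}$ via ``$G$ equals its own commutator subgroup'' would fail exactly for $\mathbb{C}^*$, whose identity component is abelian --- fortunately there the main argument, not the fallback, applies.
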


\begin{proof}[Sketch of proof] 
The list of homogeneous connected Riemann surfaces
up to bi-holomorphic equivalence  is short: 
\begin{enumerate}
\item The Riemann sphere $R$; 
\item $R\setminus\{\infty\}$; 
\item $R\setminus\{0,\infty\}$;
\item the infinite family of elliptic curves $\mathbb{C}/\Gamma$;
\item $\mathbb{C}_+=\{f_{a+bi}\mid a,b\in \mathbb{R},\,b>0\}$.
\end{enumerate}

In this list, only the Riemann sphere\index{Riemann sphere!model}\index{model!Riemann sphere} $R$ has non-compact and non-commutative stabilizers---the stabilizer of a point is the affine group of $\mathbb{C}$. Thus, we do not need to consider this surface.

The second surface in the list is bi-holomorphic to $\mathbb{C}$. The Euclidean metric $g_z(u,u)=u\bar{u}$ is up to multiplication by a constant factor the only metric whose bi-holomorphic equivalences are similarities.
Note that its group of isometries is a strict subgroup of the group of similarities. It is not commutative. So we also do not consider this surface.

The third surface in the list is the image of the covering map
$z\in \mathbb{C} \mapsto e^{2\pi i z}	\in \mathbb{C}^*$ with deck transformations $z\mapsto z+k,\,k\in \mathbb{Z}$. These deck transformations are isometries of the Euclidean metric, therefore they define a metric $g$ on $\mathbb{C}^*$ which is locally Euclidean. This  metric $g$ is defined by $g_z(u,u)=\frac{u\bar{u}}{z\bar{z}}$.
Note the triple equality ${\rm Sim}(\mathbb{C}^*,g)={\rm Iso}^+(\mathbb{C}^*,g)={\rm Aut}(\mathbb{C}^*,J)$.

A similar discussion holds for the family of elliptic curves. In this case, instead of the mapping $\mathrm{exp}:\mathbb{C}\to \mathbb{C}^*=\mathbb{C}/\mathbb{Z}$, one considers a mapping $\mathrm{exp}_{\omega}:\mathbb{C}\to\mathbb{C}/\mathrm{Gr}(1,\omega)$ where $\mathrm{Gr}(1,\omega)$ is the group of translations generated by the complex numbers $1$ and $\omega$.

The group $G={\rm PSL}(2,\mathbb{R})$ acts  transitively on $\mathbb{C}_+$ by
$z\in \mathbb{C}_+ \mapsto \frac{az+b}{cz+d}\in \mathbb{C}_+$. The stabilizer of $i$ is the compact group 
${\rm Stab}(i)={\rm PSO}(2)$. The Killing form $k(H,K)={\rm Trace}(HK)$ on the Lie algebra  ${\rm Lie}(G)$ induces on 
$\mathbb{C}_+=G/{\rm Stab}(i)$ a Riemannian metric which is conformal with\index{J@$J$-conformal metric}\index{J@$J$-compatible metric} $J=m_i$ (``multiplication by $i$").
Again a triple equality of groups holds.

The interpretation of $z=a+bi\in \mathbb{C}_+$ as a ring homomorphism $f_z$ and as an ideal $((X-a)^2+b^2))$ (see \S \ref{s:Riemann-shadow}) gives an elementary insight
for the above explanation by Lie group theory.

Given $z\not=z'$, define the curve $L_{z,z'}=z_t,\, t\in ]a^-(z,z'),a^+(z,z')[$ by the following:
put $z=a+bi,z'=a'+b'i$, then $z,z'$ are the roots in $\mathbb{C}_+$ of the polynomials $P_z=x^2-2ax+a^2+b^2,\,P_{z'}=x^2-2a'x+a'^2+b'^2$. Let $]a^-(z,z'),a^+(z,z')[$ be the maximal open interval containing the interval $[0,1]$ such that for all $t\in ]a^-(z,z'),a^+(z,z')[$
the convex combinations of polynomials $Q_t=tP_z+(1-t)P_{z'}$
have no real roots. Define $L_{z,z'}$ as the path of roots in $\mathbb{C}_+$ of the polynomials $Q_t$.

If $a=a'$, then the curve $L_{z,z'}$ is the real half-line perpendicular to the real axis through  $z$ and $z'$. If $a\not=a'$, then the curve $L_{z,z'}$ is the semi-circle through $z,z'$ with center on the real axis, hence perpendicular to the real axis.

The quantity $D(z,z')=\frac{1}{2}\log(\lambda_{a^-(z,z'),a^+(z,z'),0,1})$ defines a metric on $\mathbb{C}_+$, which is ${\rm PSL}(2,\mathbb{R})$-invariant. The curves $L_{z,z'}$ are length minimizing, hence geodesics.
More precisely, the expansion of $D(z,z+u)^2$ at the point $z$ gives
the leading second order term $g_z(u,u)=\frac{u\bar{u}}{{\rm Im}(z)^2}$ and defines a Riemannian metric. From this, we can  recover the hyperbolic metric on $\mathbb{C}_+$, see \cite[\S 8.3]{ACP2012}.\qed
\end{proof}

In conclusion, from the formulae $\omega(u,v)=g(u,Jv)$ and $\omega(u,Jv)=g(u,v)$, each of the volume form $\omega$ and the metric $g$ is determined from the other one and from the almost complex structure $J$. The same formulae determine $J$ from $\omega$ and $g$ because the 2-forms $\omega$ and $g$ are non-degenerate.

In fact, more generally, the motto is that in the triple of structures on surfaces (volume form $\omega$, Riemannian metric $g$, almost complex structure $J$) on $\mathbb{R}^{2n}$, two elements determine the third one. The result is that there are fruitful interactions between symplectic geometry, Riemannian geometry and complex geometry.  This is expressed in a spectacular manner in Gromov's 1985 paper in which he introduced pseudo-holomorphic curves \cite{Gromov}. This introduced also notions of ``positivity" and of ``compactness" in the three geometrical settings. One result is that the stabilizers of $w$ and $g$ in $\mathrm{GL}(2n,\mathbb{R})$, if they are related by a $J$ as above, coincide and form a maximal compact subgroup.

\subsection{Spherical geometry}\label{ss:spherical}

The differential sphere is present as the Riemann sphere $R$. Spherical geometry\index{spherical geometry} is still missing. The group ${\rm PGL}(2,\mathbb{C})$ acts
transitively on $(R,J_R)$. The stabilizer ${\rm Stab}(\infty)$
is the group $z\mapsto \lambda z+ t, \lambda\in \mathbb{C}^*,\,t\in \mathbb{C}$. Again by Lie theory, each maximal compact subgroup $U$
of  ${\rm Aut}(R,J_R)$ defines a spherical metric $g_U$ on $R$ that is conformal with $J_R$.

A perhaps more elementary approach is the following:
 Let $I_R$ be the space of fixed-point free involutions that preserve the shadow function\index{shadow function} $\lambda:R^4\setminus {\rm Diag}\to \mathbb{C}$. For two distinct involutions $A,B$,
the composition $C=A\circ B$ has two fixed points $p,q$. Let $C_p,C_q$ be the determinants of the differentials $D_pC,D_qC$ at $p,q$. Define the distance 
$D(A,B)$ by 
\[D(A,B)=\frac{1}{2}\log((C_p+C_q)/2).\]
The space $(I_R,D)$ is a model for hyperbolic $3$-space.\index{hyperbolic 3-space!model}\index{model!hyperbolic 3-space} The infinitesimal version $g_{I_R}$ of the metric $D$ is a Riemannian metric on $I_R$. For each $A\in I_R$, the half-rays from $A$ define a diffeomorphism
$\phi_A$ from the infinitesimal sphere $S_A$ with center $A$ to $R$. The map $\phi_A$ is conformal and carries the spherical metric of $S_A$ to a metric
$g_A$ on $R$ which is conformal with\index{J@$J$-conformal metric}\index{J@$J$-compatible metric} $J_R$. Think of $S_A$ as the unit sphere in $(T_AI_R,g_{I_R,A})$.
The group ${\rm Iso}(R,g_A)$ is a maximal compact subgroup  in ${\rm Aut}(R,J_R)={\rm PGL}(2,\mathbb{C})$. Moreover the equality ${\rm Sim}(R,g_A)={\rm Iso}(R,g_A)$ holds.

We think of the Riemann sphere\index{Riemann sphere!model}\index{model!Riemann sphere} $R$ as the space of $1$-dimensional vector subspaces in $V=\mathbb{C}^2$ (\S \ref{s:Riemann-shadow}). A positive Hermitian form\index{positive Hermitian form}\index{Hermitian form!positive} on
$V$ is a real bilinear map
$h:V\times V\to \mathbb{C}$ satisfying 
for $u,v\in V,\lambda \in \mathbb{C}$

\begin{itemize}

\item $h(\lambda u,v)=\lambda h(u,v)$,

\item $h(u, v)=\bar{h(v,u)}$,

\item $h(u,u)>0$ for $u\not= 0$.

\end{itemize}

  A positive Hermitian form on
$V$ defines by $L\in R \mapsto L^\perp\in R$
a fixed-point free involution $A_h$ of the Riemann sphere  $R$ that preserves the shadow function\index{shadow function} $\lambda$. Here,  associated with the complex line
$L=tu,t\in \mathbb{C}, u\in V,u\not= 0,$ is the complex line $L^\perp=\{v\in V \mid h(v,u)=0\}$. Two positive Hermitian forms that differ by a positive factor give the same involution. 

Moreover the stabilizer in ${\rm Aut}(R)={\rm PGL}(2,\mathbb{C})$ is a maximal compact subgroup in ${\rm Aut}(R)$. Similar forms have the same stabilizers and all maximal compact subgroups correspond to a unique form.

If two lines $L,L'$ in $R$ are neither perpendicular nor equal, they 
give by $L,L',L^\perp,L'^\perp$ a quadruple of complex lines. The 
expression
$D(L,L')=\lambda_{L,L',L'^\perp,L^\perp}$ defines a function  
on $R\times R$ with values in $\mathbb{R}_+\cup \{+\infty \}$. The 
preimage of $0$ is the diagonal, the set of pairs $(L,L')$ with $L'=L$, and 
the 
preimage of $+\infty$ the set  of pairs with $L'=L^\perp$. Its infinitesimal 
version along the diagonal, i.e., at $L\in R$ the 
Hessian of $L'\mapsto D(L,L')$ at $L'=L$, defines a Riemannian metric on 
$R$ which  is similar, even isometric, to the 
spherical geometry of Gaussian curvature $+1$ in dimension 2.

\subsection{Models for hyperbolic 3-space and metrics of constant positive curvature on the sphere} \label{s:Model-spaces}

Even though the 3-dimensional hyperbolic space\index{hyperbolic 3-space!model}\index{model!hyperbolic 3-space} $\mathbb{H}^3$ does not admit a complex structure, its automorphism group is that of a complex manifold
and is itself a complex Lie group, since we have
$$\mathrm{Iso}^+(\mathbb{H}^3)
= \mathrm{Aut}_{\mathrm{hol}}(\mathbb{P}^1(\mathbb{C}))={\rm PGL}(2,\mathbb{C}).
$$

We have the following four models of the hyperbolic 3-space\index{hyperbolic 3-space!model}\index{model!hyperbolic 3-space}
$\mathbb{H}^3$  that arise from the complex geometry of surfaces; 
the first two models are geometric (they are defined in terms of group actions) whereas the other two use algebra:

\begin{enumerate}
\item
The space of fixed-point free shadow-preserving involutions of $R$.
\item  The space of Riemannian metrics on $R$ that are conformal with\index{J@$J$-conformal metric}\index{J@$J$-compatible metric} $J_R$ and similar to the spherical metric.
\item The space of similarity classes of positive Hermitian forms on $\mathbb{C}^2$.
\item
The space of maximal compact subgroups in ${\rm PGL}(2,\mathbb{C})$ (which is the automorphism group of the oriented hyperbolic 3-space). 
\end{enumerate}

The first model already appeared in \S \ref{ss:spherical}, where we constructed the spherical geometry of the Riemann sphere. This is the sphere we called $I_R$ there. To get another point of view on this model,\index{hyperbolic 3-space!model}\index{model!hyperbolic 3-space} consider first the well-known Poincar\'e model of the hyperbolic space $\mathbb{H}^3$ as a unit ball sitting in $3$-space, with boundary the Riemann sphere $R$. For each point $p$ in $\mathbb{H}^3$, an involution of $R$ is defined by assigning to each point in $R$ the intersection with the sphere of the line passing through this point and $p$. To see that we get a model of  $\mathbb{H}^3$, use the Hilbert metric model of this space.

For the second model, we also consider the Poincar\'e unit ball model of the hyperbolic space, with boundary the Riemann sphere $R$.  For each point of $\mathbb{H}^3$, we take the diffeomorphism which sends the infinitesimal round sphere (or a sphere in the tangent space) at that point to the boundary at infinity of the space, using the geodesic rays that start at this point. This is a conformal mapping. (One may prove this by trigonometry.) The mapping sends the conformal metric of the infinitesimal sphere to a metric on the sphere sitting at the boundary of the hyperbolic space. This construction commutes with the isometries of $\mathbb{H}^3$. It gives a 3-dimensional space of metrics in the same conformal class on $\mathbb{P}^1(\mathbb{C})$.  Thus, the hyperbolic $3$-space $\mathbb{H}^3$ appears as a space of metrics of curvature $+1$ on $\mathbb{P}^1(\mathbb{C})$.  In other words,  $\mathbb{H}^3$ appears as a space of special metrics, i.e., as a moduli space, namely, the space of Riemannian metrics $g$ of Gaussian curvature $+1$ on $R$ that are moreover in the conformal class of $J_R$. Alternatively, the hyperbolic $3$-space  $\mathbb{H}^3$ appears as the space of volume forms
$\omega$ on $R$ such that the Riemannian metric defined by $g(u,v)=\omega(u,J_Rv)$ is a 
metric of Gaussian curvature $+1$.

For the third model, recall that a positive Hermitian form on $\mathbb{C}^2$ is a Riemannian metric on $\mathbb{R}^4$ such that one can measure the angle between two lines in this space. This model was explained in \S \ref{ss:Hermitian} above.

 The fourth model is equivalent to the 3rd because the group $\mathrm{PGL}(2,\mathbb{C})$ acts on the space in question with stabilizer the automorphism group of the oriented hyperbolic 3-space, that is, the group $\mathbb{PGL}(2,\mathbb{C})$ quotiented by its maximal compact subgroup.

An explicit way of interconnecting the above models for the hyperbolic three-space $\mathbb{H}^3$ is as follows.

Think of $S^2$ as the unit sphere in $\mathbb{R}^3$ with its induced spherical metric together with the corresponding conformal structure, the space $\mathbb{R}\oplus\mathbb{C}$, for the Euclidean norm
$||(t,z)||^2=t^2+z\bar{z}$.

The stereographic projection with pole $(0,-1)$
maps by $$(0,z)\mapsto (\frac{1-z\bar{z}}{1+z\bar{z}},\frac{2z}{1+z\bar{z}})$$ the factor $\{0\}\times \mathbb{C}$ conformally to $S^2$. This projection extends to an (oriented) diffeomorphism 
$$St:R=\mathbb{C}\cup \{\infty\} \to S^2$$
 and equips by pullback the Riemann sphere with a Riemannian metric of Gaussian curvature $+1$. 

Given an ordered triple of distinct points $(a,b,c)$ in $R$, the shadow function gives by 
$$p\in R\setminus\{c\}\mapsto Sh_{abc}(p)=(0,cr(a,p,b,c))\in 
\{0\}\times\mathbb{C}\cup\{\infty\}$$
 a mapping such that the composition $St_{abc}\circ Sh$ extends to a conformal diffeomorphism  $\mu_{abc}:R \to S^2$.

By pullback we obtain a family $g_{abc}$ of Riemannian metrics of curvature $+1$ on $R$. Two such metrics $g_{abc}$ and $g_{a'b'c'}$ are related by an oriented isometry if and only if 
the composition $\mu_{abc}\circ \mu_{a'b'c'}^{-1}$ is an oriented isometry of $S^2$.

Now remember that the group of holomorphic automorphisms  of $R$ is isomorphic to ${\rm PGL}(2,\mathbb{C})$ and acts simply transitively on triples of distinct points. So each triple $(a,b,c)$ is obtained in a unique way as $(a,b,c)=M_{abc}(a_0,b_0,c_0)$ from a chosen triple $(a_0,b_0,c_0)$ by applying a well-defined element $M_{abc}\in {\rm PGL}(2,\mathbb{C})$. The above condition that the metrics $g_{abc}$ and $g_{a'b'c'}$ are related by an oriented isometry
translates into the fact that $M_{a',b',c'}\circ M_{a,b,c}^{-1}\in {\rm PU}(2,\mathbb{C})$ and so it proves that the
hyperbolic 3-space $\mathbb{H}^3$ is parametrized by the symmetric space ${\rm PGL}(2,\mathbb{C})/{\rm PU}(2,\mathbb{C})$.

From the above discussion, some basic ingredients of the geometry of $\mathbb{H}^3$ can be seen, such as the following:

\begin{itemize}
\item Ideal points are the points of $R$.

\item Given $a\not=c $ in $R$, the points on the line $L_{ac}$ from $a$ to $c$
are the metrics $\mu_{a,b,c}$ where $b$ varies over $R\setminus \{a,c\}$. 

\item For $p\not= q \in H^3$, represented respectively by metrics $\mu_p,\mu_q$ and volume forms $\omega_p, \omega_q$ on $R$, these points lie on $L_{ac}$ where
the points $a,c$ are the extrema of $\frac{\omega_p}{\omega_q}$.

\end{itemize}

 \subsection{Models for the hyperbolic plane}

The hyperbolic plane\index{hyperbolic plane}\index{model!hyperbolic plane} $\mathbb{H}^2$, like the 3-dimensional hyperbolic space $\mathbb{H}^3$, has also several interpretations in terms of the complex geometry of surfaces. 
A standard interpretation  of $\mathbb{H}^2$ is that it is the space of marked elliptic curves.
We propose three other incarnations of this plane:

\medskip

1. The hyperbolic plane\index{model!hyperbolic plane}\index{hyperbolic plane} $\mathbb{H}^2$ is the space of ideals $\mathbb{H}_I$
with its geometry naturally given by a family of lines $L_{((X-a)^2+b^2),((X-a')^2+b'^2)}$. Indeed, the plane $\mathbb{H}_I$ comes with a natural complex coordinate $z:\mathbb{H}_I\to \mathbb{C}_+=\{a+ib, \ b>0\}$, defined by \[z((X-a)^2+b^2))=a+bi.\]
The coordinate $z$ is a bijection with
 \[Z:a+bi\mapsto ((X-a)^2+b^2)\] as inverse. Thus,  $Z(a+bi)$ is the interpretation of the number $a+bi$ as an ideal. See the details in \S \ref{s:compatible} above.

\medskip

2. The hyperbolic plane\index{model!hyperbolic plane}\index{hyperbolic plane} is the space $\mathbb{H}_J$ of\index{almost complex structure}  almost complex structures $J$ on $\mathbb{R}^2$ (that is, endomorphisms of $\mathbb{R}^2$ satisfying $J^2=-\mathrm{Id}$) equipped with coordinates $(x,y)$, and a volume form $\omega=dx\wedge dy$, and where $J$ is calibrated by the inequality $\omega(u,Ju)>0$ for all $u\in\mathbb{R}^2$.

Another way of seeing this is the following. We start by defining a map: $\mathrm{Fix}:H_J\to \mathbb{C}_+$, so that a point in the space $H_J$ has a coordinate $z(J)={\rm Fix}(J)\in \mathbb{C}_+$ where ${\rm Fix}(J)$
is the fixed point of $J$ for its homographic action on $\mathbb{C}_+$.

 More explicitly, the matrix of $J$ in the basis $(1,i)$ is of the form $(\begin{smallmatrix} a/b & *\\ 1/b &-a/b\\  \end{smallmatrix})$ where $h\in \mathbb{R}$ and $b>0$ (use the fact that the trace is zero), and $*$ is obtained from the fact that the determinant of the matrix is equal to 1. Such a matrix acts on the upper half-plane with exactly one fixed point, and we assign to $J$ this fixed point. It is possible to write explicitly the coordinates of this fixed point in terms of $a$ and $b$.

 Consider the map
\[z((\begin{smallmatrix} h & *\\ k &-h\\  \end{smallmatrix}))=\frac{h}{k}+\frac{i}{k}\]
where $k>0$ and $*=\frac{1+h^2}{-k}$ is a complex coordinate. This is again a bijection, with inverse 
\[Z:a+bi\mapsto (\begin{smallmatrix} a/b & *\\ 1/b &-a/b\\  \end{smallmatrix}), \, b>0,\] and there is an interpretation of the number $a+bi,\, b>0$ as a linear complex structure.

\medskip

The incarnation $\mathbb{H}_J$ is very helpful for the construction of hyperbolic trigonometry. Only High-school/Freshman knowledge is required in this construction.

This incarnation $\mathbb{H}_J$ is also very helpful for the study of the space of complex structures\index{space of complex structures}\index{complex structure!space of} $\mathbb{J}(TS)$ on a given surface. 
As a first example, the surface $\mathbb{H}_J$ itself
has a complex structure. A tangent vector at $J\in \mathbb{H}_J$ is an endomorphism $K$ of $\mathbb{R}^2$ such that the equation
$$(J+K)^2=J^2+J\circ K+ K\circ J+ K^2=-{\rm Id}_{\mathbb{R}^2}$$ holds at first order in $K$. Therefore the tangent space $T_J\mathbb{H}_J$ at $J$ of $\mathbb{H}_J$ is the vector space of endomorphisms $K$ that anti-commute with $J$. The map $K\in T_J\mathbb{H}_J	\mapsto 
J\circ K \in T_J\mathbb{H}_J $ is a canonical complex structure $J_{\mathbb{H}_J}$ on $\mathbb{H}_J$ which induces a complex structure on the infinite-dimensional manifold $\mathbb{J}(TS)$.
This space $\mathbb{J}(TS)$ is canonically path-connected. Indeed two structures $J_0,J_1$ are connected by the path that for $p\in S$ is a geodesic in the space of linear complex structures on the oriented real vector space $T_pS$. The fact that the space $\mathbb{J}(TS)$ is contractible now follows, since these canonical paths depend continuously upon endpoints.

\medskip

3. The bijective coordinates $z$  on 
$\mathbb{H}_I,\mathbb{H}_J$ transport the geometries to a common geometry on 
$\mathbb{C}_+$. The coordinate $z:\mathbb{H}_J\to \mathbb{C}_+$ is $(J_{\mathbb{H}_J},m_i)$-holomorphic. The model 
$\mathbb{C}_+$ with its conformal structure and hyperbolic metric, equipped with its action by the modular group 
${\rm PSL}(2,\mathbb{R})$, is
a very appreciated model of the hyperbolic plane.

\section{Uniformisation}\label{s:Uni}
\subsection{Riemann's uniformisation of simply connected domains}

\begin{theorem}[The Riemann Mapping Theorem \cite{Riemann1851}]

  Let $\Omega$ be a non-empty open connected and simply connected subset of the complex plane which is not the entire plane.   Let $z_0$ be a point in $\Omega$ and $v$ a nonzero vector at $z_0$.  Then, there is a unique holomorphic bijection $f$ from $\Omega$ to the unit disc $\mathbb{D}$ such that $f(z_0)=0$ and $df(v)$ is a complex number which is real and positive.

\end{theorem}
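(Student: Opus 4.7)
The plan is to treat uniqueness via Schwarz's lemma and to establish existence by the classical Koebe--Carathéodory extremal argument, maximizing the derivative at $z_0$ over a suitable family of holomorphic injections into $\mathbb{D}$.

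For uniqueness, suppose $f,g$ are two such maps. Then $\phi:=g\circ f^{-1}$ is a holomorphic automorphism of $\mathbb{D}$ fixing $0$, hence by Schwarz's lemma applied to $\phi$ and $\phi^{-1}$ it is a rotation $z\mapsto e^{i\theta}z$. The condition that $df(v)$ and $dg(v)$ are both real positive forces $e^{i\theta}=1$, so $f=g$.

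For existence I would work with
\[\mathcal{F}=\{f\colon\Omega\to\mathbb{D}\mid f\text{ holomorphic and injective},\ f(z_0)=0\}\]
and the quantity $M:=\sup_{f\in\mathcal{F}}|f'(z_0)|$. Nonemptiness of $\mathcal{F}$ uses both hypotheses on $\Omega$: pick $a\in\mathbb{C}\setminus\Omega$, then simple connectedness and nonvanishing of $z-a$ on $\Omega$ allow a holomorphic branch $h$ of $\sqrt{z-a}$ to be defined there. This $h$ is injective, and its image omits an open disc since $h(\Omega)$ and $-h(\Omega)$ are disjoint nonempty open sets; a Möbius transformation of $\mathbb{C}\cup\{\infty\}$ then normalizes $h$ into a member of $\mathcal{F}$. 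The Cauchy estimate on a closed disc around $z_0$ inside $\Omega$ gives $M<\infty$. Local uniform boundedness makes $\mathcal{F}$ a normal family by Montel's theorem, so a maximizing sequence has a locally uniform limit $F\colon\Omega\to\overline{\mathbb{D}}$ with $|F'(z_0)|=M>0$; the open mapping theorem yields $F(\Omega)\subset\mathbb{D}$ and Hurwitz's theorem yields injectivity, so $F\in\mathcal{F}$ realizes the supremum.

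The crucial and technically hardest step is showing $F$ is surjective. Assuming some $w_0\in\mathbb{D}\setminus F(\Omega)$, the Möbius map $\psi_{w_0}(z)=(z-w_0)/(1-\overline{w_0}z)$ makes $\psi_{w_0}\circ F$ a nonvanishing map $\Omega\to\mathbb{D}$; simple connectedness provides a holomorphic square root $g$, and one more Möbius normalization produces $\tilde F\in\mathcal{F}$. One then writes $F$ as the composition of $\tilde F$ with a self-map $\chi$ of $\mathbb{D}$ that fixes $0$ and is not injective, because it involves squaring; Schwarz's lemma gives $|\chi'(0)|<1$ and hence $|\tilde F'(z_0)|>M$, contradicting maximality. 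This square-root inflation is the main obstacle, and the delicate point is tracking the Möbius transformations carefully so that non-injectivity of squaring produces a strict inequality. Finally, post-composing $F$ by a rotation $z\mapsto e^{i\alpha}z$ preserves $\mathcal{F}$ and $|F'(z_0)|$, and a unique $\alpha\in[0,2\pi)$ makes $df(v)$ a positive real number, completing the construction.
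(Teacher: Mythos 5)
Your proposal is correct, but it is not the route the paper takes. The paper's proof follows Riemann's original argument (after Julia): assuming $\Omega$ is bounded by a Jordan curve, it writes a hypothetical solution as $f(z)=ze^{H(z)}$, observes that $\log|f|$ is harmonic and vanishes on $\partial\Omega$, and thereby reduces the whole theorem to solving a Dirichlet boundary-value problem — finding a harmonic function $P$ on $\Omega$ with boundary values $-\log r$ — which is attacked by minimizing the Dirichlet energy functional $I(P)=\int_\Omega \bigl( (\partial P/\partial x)^2+(\partial P/\partial y)^2 \bigr)\,dx\,dy$. Your argument is instead the classical Koebe--Carath\'eodory extremal method: uniqueness via Schwarz's lemma, nonemptiness of the competing family by the square-root trick, compactness via Montel and Hurwitz, and surjectivity of the extremal map by the square-root inflation contradiction. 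The paper is aware of this second route: its Remark 2 following the proof sketches exactly your argument (maximizing $|f'(z_0)|$ over injective maps into $\mathbb{D}$, with Montel's theorem supplying the limit), citing Thurston's notes, so your proposal fleshes out the paper's remark rather than its proof. The trade-off is real: Riemann's approach as presented is only an outline, is restricted to Jordan-bounded domains, and is conditional on the Dirichlet principle (whose justification was historically the weak point, and whose variational formulation the paper leaves unverified), but it connects directly to potential theory, boundary behavior, and the physical/electrostatic interpretation the paper develops later in its discussion of the type problem. Your extremal argument is self-contained and fully rigorous for an arbitrary proper simply connected subdomain of $\mathbb{C}$, at the cost of being less constructive and shedding no light on boundary correspondence, which the paper must then recover separately via Carath\'eodory's theory.
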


Note that the inverse of a holomorphic bijection is also holomorphic (think of a holomorphic function as an angle-preserving map). Therefore the map $f$ is biholomorphic.

\begin{proof}[Outline, after Riemann's proof, see \cite{Julia}] This proof works in the case where $\Omega$ 
  is an open subset of the plane bounded by a Jordan curve (a simple closed curve). We outline this proof.

We wish to find a biholomorphic mapping $f:\Omega\to \mathbb{D}$ which sends $\gamma=\partial \Omega$ to $\mathbb{S}^1=\partial \mathbb{D}$.

Suppose that such a mapping $f$ exists. Separating the real and imaginary parts, we set

\[f(z)=u(x,y)+iv(x,y).\]
 Applying a translation, we may assume that $z_0=0$, therefore,   $f(0)=0$. Applying  a rotation, we may also assume that the vector of coordinate $(0,1)$ at $0$ is sent by $f$ to a vector pointing towards the positive real numbers.
 
 Such a map $f$, it is exists, is unique, since any holomorphic automorphism of the disc which fixes the origin is a rotation, and a rotation that fixes a direction is the identity.

Since $f$ is a bijection between $\Omega$ and the unit disc, the point $z=0$ is the unique solution of the equation $f(z)=0$. Furthermore, the differential (or the complex derivative) of $f$ at $0$ is nonzero, otherwise $f$ would be non-injective in the neighborhood of zero. Thus, the function $f(z)/z$ is holomorphic on $\Omega$. Since it does not take the value $0$, a branch of its complex logarithm can be defined. Hence, there exists a holomorphic function $H(z)$ on $\Omega$ such that 
\[\frac{f(z)}{z}=e^{H(z)}.\]
In polar coordinates we can write $z=r(x,y)e^{i\theta (x,y)}$, which gives

\[f(z)=ze^{H(z)}= r(x,y)e^{i\theta(x,y)} e^{H(z)}.\]

Separating the real and imaginary parts of $H$ as $H(z)=P(x,y)+iQ(x,y)$, we get

\[\log f(z) = \log r(x,y)+i\theta (x,y) +P(x,y)+iQ(x,y),\]
which we can write as

\[\log f(z) = \log r(x,y) +P(x,y)+i(\theta (x,y) + Q(x,y)).\]

Notice that the function $\log \vert f\vert$ is identically zero on $\partial \Omega$, since points $z$ on this curve satisfy $\vert f(z)\vert =1$.

Furthermore, the function $\log \vert f\vert$ is the real part of the holomorphic function $\log f $, therefore it is harmonic\index{harmonic function} at every point of $\Omega$.

Now given a harmonic function\index{harmonic function} defined on the simply connected subset $\Omega$ of the plane, there exists a unique holomorphic function which has this function as a real part.  (The usual way to find this function is to use Cauchy's theory of path integrals.)

Reasoning backwards, the problem of mapping $\Omega$ to the unit disc can be solved if we can find a function $P$ which is harmonic\index{harmonic function} on $\Omega$ and which takes the values $-\log r$ on $\gamma=\partial \Omega$.

In this way, Riemann reduced the problem of finding a holomorphic mapping from $\Omega$ to $\mathbb{D}$ to a problem of finding a harmonic function\index{harmonic function} on $\Omega$ with a prescribed value on $\partial \Omega$. To find such a function, he appealed to the so-called Dirichlet method,\index{Dirichlet method} which consists of considering the functional $I$ defined on the space of differentiable functions $P$ on $\Omega$ by:
\[I(P)= \int_{\Omega}
(\frac{\partial P}{\partial x}^2 + \frac{\partial P}{\partial y}^2
)dxdy
.\]
A function which realizes this infimum is harmonic.\qed

\end{proof}

 \begin{remarks}

    1.--- Riemann also used the Dirichlet principle\index{Dirichlet principle} in his existence proof of meromorphic functions on general Riemann surfaces, obtaining them from their real parts (harmonic functions). It was realized later that the Riemann Mapping Theorem and the existence of meromorphic functions are equivalent.

 \medskip
 
2.--- A modern proof of the Riemann Mapping Theorem  goes as follows (see  Thurston's notes\cite{Thurston83}): Given a simply connected open strict subset $\Omega$ of the complex plane and a point $z_0$ in $\Omega$, consider the set of functions 
\[\mathcal{F}=\{f:\Omega\to \mathbb{D},   \, \mathrm{  holomorphic, injective, }\  f(z_0)=0 \ \mathrm{ and } \ 
f'(z_0)>0\},\] equipped with the topology of uniform convergence on compact sets. The desired conformal mapping is the mapping $f$ in this family that maximizes the modulus of the derivative $\vert f'(z_0)\vert$.  The existence of such a mapping uses the fact that for any family of holomorphic functions which is uniformly bounded on compact sets, every sequence has a subsequence which converges uniformly on compact sets to a holomorphic function  (this is Montel's theorem).

 \end{remarks}
 
We shall call a \emph{Riemann mapping}\index{Riemann mapping} a biholomorphic mapping that sends  the unit disc to a simply-connected subset of the plane (which we sometimes call a domain). This is the inverse of the mapping $f:\Omega\to\mathbb{D}$ that we just considered.

For a general domain $\Omega$, the Riemann mapping\index{Riemann mapping} is not explicit. In some special cases, there are explicit formulae, e.g., when $\Omega$ is the upper half-plane, or a band in the plane (a region bounded by two parallel lines), or an angular sector (a connected component of the union of two intersecting Euclidean lines in the plane), or a domain bounded by two arcs of circle, or a sector of a disc, or a rectangle in the Euclidean plane, or a regular convex polygon, or a regular star polygon, or the interior of an ellipse or a parabola, and a few other cases. These examples and others are studied in the book by G. Julia \cite{Julia}.

There are also results that concern the boundary behavior of the Riemann mapping.\index{Riemann mapping!boundary behavior} The first name associated with such a boundary theory is Carath\'eodory. He  proved that if the boundary of the domain $\Omega$  is a Jordan curve, then the Riemann mapping\index{Riemann mapping!extension} extends continuously to a homeomorphism from the unit circle to the boundary of the domain. He also proved that in the general case the Riemann mapping extends continuously to the unit circle if and only if  the boundary of the domain is locally connected (but this extension is generally not a homeomorphism). There are also measure-theoretic results and questions. For instance, in the case where the domain is bounded by a rectifiable curve $\gamma$,  an extension of the Riemann mapping\index{Riemann mapping!extension} exists and sends any subset of measure zero (resp. of positive measure) of $\gamma$  to a subset of measure zero (resp. of positive measure) of the circle. This was proved by Luzin and Privaloff \cite{LP} and by  F. and M. Riesz \cite{Riesz}.  One can also mention here the following result of Fatou \cite{Fatou1906}: If a function $f$  defined on the unit disc is holomorphic and bounded, then, at almost every point $t$ on the unit circle $\vert z\vert =1$, the value of $f$ at a sequence of points converging to that point on a path which is not tangential to the unit circle has a limit. For an exposition of  related results, see Chapter II of Lavrentieff's book \cite{Lavrentieff-Sur}. These results are useful in the modern developments of complex dynamics, in particular in the study of the conformal representation of the complement of the Julia set of a polynomial.

\subsection{Uniformization of multiply connected domains}

It is natural to search for\index{uniformization!multiply connected domains} generalizations of  the Riemann Mapping Theorem for multiply connected open subsets of the plane. The first natural question is whether there exist canonical domains onto which they can be mapped biholomorphically, in analogy with the fact that the disc and the plane are canonical domains for simply connected domains. It is known that two open subsets of the complex plane that have the same connectivity are not necessarily conformally equivalent. For instance, two circular annuli (that is, annuli bounded by two concentric circles) are conformally equivalent if and only if they have the same modulus, that is, if and only if the ratio of their outer radius to their inner radius is the same. Thus, there are no canonical domains for multiply connected open  subsets of the plane. However, there are figures which may be considered as ``standard" domains for such subsets, 
and we now discuss some of them.

One of the oldest works on this question was done by Koebe, who studied in his papers  \cite{Kobe1918}, \cite{ Kobe1918a} and others conformal mappings of multiply connected open subsets of the plane onto \emph{circular domains},\index{circular domain} that is, multiply connected domains whose boundary components are all circles (which may be reduced to points). He proved that every finitely connected domain in the plane is conformally equivalent to a circular domain. The question of whether there is an analogous result for open subsets of the plane with infinitely many boundary components is still open. This question is known under the name  \emph{Kreisnormierungsproblem}, or the \emph{Koebe uniformization conjecture}. Koebe\index{Koebe uniformization conjecture}\index{uniformization conjecture!Koebe}  formulated it in his paper \cite{Kobe1908}. The reader interested in this question may refer to the paper \cite{Bowers} by Ph. Bowers in which the author surveys several developments of this conjecture, including related works on circle packings by Koebe, Thurston and others. See also \ref{Br1992}.

 H.  Gr\"otzsch, in his paper \cite{Groetzsch1928},  works with two classes of domains which were considered by Koebe:
\begin{enumerate}
 \item  \emph{Annuli with circular slits}.\index{annulus with circular slits} These are circular annuli from which a certain number ($\geq 0$) of circular arcs (which may be reduced to points) centered at the center of the annulus, have been deleted. 
 \item \emph{Annuli with radial slits}.\index{annulus with radial slits}  These are circular annuli from which a certain number ($\geq 0$) of radial arcs (which may be reduced to points) have been removed. Here, a radial arc is a Euclidean segment in the annulus which, when extended, passes through the center of the annulus. 
 \end{enumerate}

%
%

The unit disc slit along an interval of the form $[0,r]$ ($r<1$)  was employed by Gr\"otzsch in  \cite{Groetzsch1928} as a standard domain and is known  under the name \emph{Gr\"otzsch domain}.\index{Gr\"otzsch domain}
    Teichm\"uller, in his paper  \cite{T13}, calls a \emph{Gr\"otzsch extremal region}\index{Gr\"otzsch extremal region} the complement of the unit disc in the Riemann sphere $\mathbb{C}\cup\{\infty\}$  cut along a segment of the real axis joining a point $P>0$ to the point $\infty$.
    
     Another ``standard model" for doubly connected domains is the Riemann sphere slit along  two intervals of the form $[-r_1,0]$ and $[r_2,\infty)$ where $r_1$ and $r_2$ are positive numbers. Such a domain is called  a \emph{Teichm\"uller extremal domain} by Lehto and Virtanen \cite[p. 52]{LV73}. In his paper \cite{T13},  Teichm\"uller uses these domains in the study of extremal properties of conformal annuli. In the same paper, 
he  works with other ``standard" domains, e.g.,  the circular annulus $1<\vert z\vert <P_2$ cut along the segment joining $z=P_1$ to $z=P_2$, where $P_1$ and $P_2$ are points on the real axis satisfying $1<P_1<P_2$ \cite[\S 2.4]{T13}; see the discussion in Ahlfors' book \cite[p. 76]{AhC}.

\subsection{Uniformization of simply connected Riemann surfaces}\label{s:unif-simply}
It is natural to consider conformal representations of surfaces that are not subsets of the complex plane.
A wide generalization of the Riemann Mapping Theorem is the following, proved independently by Poincar\'e and Koebe:  
\begin{theorem}[Uniformization theorem] \label{t:uniformization}
Every simply connected Riemann surface can be mapped\index{uniformization theorem} biholomorphically either to 
 
 \begin{enumerate}
 
\item the Riemann sphere (elliptic case);\index{elliptic type}\index{type!elliptic}
 
\item the complex plane (parabolic case);\index{parabolic type}\index{type!parabolic}
 
\item the unit disc (hyperbolic case).\index{hyperbolic type}\index{type!hyperbolic}
\end{enumerate}
  
  \end{theorem}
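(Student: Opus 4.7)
The plan is to separate the compact and non-compact cases, reduce the compact case to the non-compact one by puncturing, and then handle non-compact simply connected Riemann surfaces via an exhaustion argument anchored on the Riemann Mapping Theorem applied to compact bordered subsurfaces.

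First I would dispose of the compact case. A compact simply connected Riemann surface $S$ is topologically $S^2$; choosing $p\in S$, the punctured surface $S\setminus\{p\}$ is non-compact and simply connected. If the non-compact result below yields a biholomorphism $\psi\colon \mathbb{C}\to S\setminus\{p\}$ (the parabolic alternative; the hyperbolic alternative is excluded because composing with a local chart around $p$ would produce a bounded holomorphic function on a punctured disc, extending analytically and contradicting that $\psi$ is a biholomorphism onto $S\setminus\{p\}$), then $\psi^{-1}$ extends by sending $p$ to $\infty$, giving the desired biholomorphism $S\to \mathbb{P}^1(\mathbb{C})$.

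Next, for non-compact $S$, I would exhaust $S$ by relatively compact simply connected open subsets $S_1\subset S_2\subset\cdots$ with $\bigcup_n S_n = S$, each $\overline{S_n}$ a compact bordered Riemann surface of genus zero with a single boundary circle. Such an exhaustion exists because $S$ is simply connected: starting from any exhaustion by relatively compact open subsurfaces, one fills in interior boundary components, which bound discs in $S$ by simple connectivity. Fix a basepoint $p\in S_1$ and a nonzero tangent vector $v\in T_pS$. Invoking the classical fact that every compact bordered Riemann surface of genus zero is biholomorphic to the closed unit disc --- a consequence of the solvability of the Dirichlet problem on regular bounded domains, to be used here as an autonomous building block --- one obtains a unique biholomorphism $f_n\colon S_n\to\mathbb{D}$ with $f_n(p)=0$ and $df_n(v)=c_n>0$. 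Applying the Schwarz Lemma to $f_n\circ f_{n+1}^{-1}\colon f_{n+1}(S_n)\to\mathbb{D}$ shows $c_n\leq c_{n+1}$, so the sequence $c_n$ increases to some $c_\infty\in(0,+\infty]$.

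Two cases emerge. If $c_\infty<\infty$, the family $(f_n)$ is uniformly bounded and hence normal by Montel's theorem; a subsequential limit $f\colon S\to\overline{\mathbb{D}}$ with $f(p)=0$ and $df(v)=c_\infty$ is holomorphic, and by Hurwitz's theorem it is either constant or injective, hence injective with image in $\mathbb{D}$ by the open mapping theorem --- this is the hyperbolic case. If $c_\infty=\infty$, consider the rescaled inverses $\tilde g_n(w)=f_n^{-1}(w/c_n)$, defined on the disc of radius $c_n$, normalised by $\tilde g_n(0)=p$ and $d\tilde g_n(0)\cdot 1=v$; a Montel argument in local charts produces a limit $\tilde g\colon\mathbb{C}\to S$, which is the parabolic case. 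The hardest part will be establishing that these limit maps are onto their targets: surjectivity of $f$ onto $\mathbb{D}$ in the hyperbolic case and of $\tilde g$ onto $S$ in the parabolic case. The key input is the extremal characterization of $f_n$ (among univalent maps $S_n\to\mathbb{D}$ sending $p$ to $0$ it maximises the derivative along $v$): any omitted value in the target would, via a suitable M\"obius-and-square-root construction, yield a competitor with larger derivative, contradicting the extremality in the limit. A secondary delicate point is guaranteeing that the Dirichlet-type input used to uniformise each $S_n$ is genuinely independent of Theorem~\ref{t:uniformization}, so that the exhaustion argument is not circular.
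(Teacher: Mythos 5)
First, a point of comparison: the paper does not prove Theorem \ref{t:uniformization} at all --- it records it as the Poincar\'e--Koebe theorem and refers the reader to \cite{StGervais} and to \cite[Chapter 10]{A2021} --- so your attempt must be judged on its own merits. Your architecture is the standard exhaustion proof and is essentially sound: the reduction of the compact case by puncturing (removable singularity, respectively a simple pole, for the injective inverse) is correct; the building block that a relatively compact, simply connected, regularly bordered piece of a Riemann surface is conformally a disc via the Green's function is legitimate and, as you rightly insist, independent of the theorem (Perron's method plus the argument principle, no uniformization needed), although you assert rather than prove it, and most of the analytic content of the theorem is hidden there.

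There is, however, a genuine error at the pivot of the argument. You apply the Schwarz Lemma to $f_n\circ f_{n+1}^{-1}\colon f_{n+1}(S_n)\to\mathbb{D}$, but this map is defined only on a \emph{proper subdomain} of $\mathbb{D}$, where Schwarz gives no bound: the map $z\mapsto 2z$ on the disc of radius $1/2$ fixes $0$, maps into $\mathbb{D}$, and has derivative $2$. The correct application is to $f_{n+1}\circ f_n^{-1}\colon \mathbb{D}\to\mathbb{D}$ (whose domain is the full disc, since $f_n$ is onto), and it yields $c_{n+1}\leq c_n$: the sequence \emph{decreases}. Test cases confirm this: for $S=\mathbb{C}$, $S_n=D(0,n)$, one has $f_n(z)=z/n$ and $c_n=1/n\downarrow 0$; for $S=\mathbb{D}$, $S_n=D(0,r_n)$, one has $c_n=1/r_n\downarrow 1$. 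Consequently your dichotomy is miscalibrated: the parabolic case is $c_\infty=0$, not $c_\infty=\infty$, and the hyperbolic case is $c_\infty>0$. As written, $S=\mathbb{C}$ falls into your ``$c_\infty<\infty$, hence hyperbolic'' branch with $c_\infty=0$, where the Hurwitz alternative for the limit $f$ with $df(v)=0$ selects the \emph{constant} map and the argument collapses. The same inversion infects your rescaling: with $\tilde g_n(w)=f_n^{-1}(w/c_n)$ one computes $d\tilde g_n(0)\cdot 1=v/c_n^2$, not $v$; the correct normalization is $\tilde g_n(w)=f_n^{-1}(c_n w)$ on the disc of radius $1/c_n$, and these radii increase precisely when $c_n\to 0$. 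The repair is routine: set $R_n=1/c_n\uparrow R_\infty\in(0,\infty]$, with $R_\infty<\infty$ hyperbolic and $R_\infty=\infty$ parabolic. Your extremal Koebe square-root argument for surjectivity then survives the flip --- if the limit omits $a\in\mathbb{D}$, the competitor has derivative $\kappa c_\infty$ with $\kappa>1$, and since $c_n\downarrow c_\infty$ one eventually has $c_n<\kappa c_\infty$, contradicting the extremality of $f_n$ on $S_n$ --- but in the parabolic case the convergence and surjectivity of the inverse maps should be extracted from the Koebe distortion theorem applied to the transition maps $h_m\circ h_n^{-1}$ (univalent, fixing $0$ with derivative $1$), rather than from a bare Montel argument in charts, since the target $S$ carries no a priori bounds.
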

  
 In each of these cases, the conformal biholomorphism from the model space onto the open Riemann surface is usually called a \emph{uniformizing map}.\index{uniformizing map} Such a map associated to a Riemann surface is well defined up to composition by a conformal homeomorphism of the source. This implies that the mapping is determined in the first case by the images of three distinct points, in the second case by the image of two distinct points, and in the third case by the image of one point and a direction at that point.
 
 If the Riemann surface is a simply connected open subset $\Omega$ of the complex plane which is not the entire plane, then it is necessarily of hyperbolic type (this is the Riemann Mapping Theorem). Uniformizing maps for such surfaces have been particularly studied. 
 
 For a proof of the uniformization theorem\index{uniformization theorem} based on a historical point of view, see the collective book \cite{StGervais}. For another point of view on uniformization (``uniformization by energy"), see \cite[Chapter 10]{A2021}.
  
   There are several open questions on the behavior of the uniformizing maps of simply connected surfaces equipped with Riemannian metrics.  Lavrentieff, in his paper \cite{Lavrentieff1935}, adresses the question of the Lipschitz behavior of  a uniformizing map $f:\mathbb{D}\to \Omega$. More precisely, he asks for conditions on the surface $\Omega$ so that for any pair of points $x,x' \in \mathbb{D}$, the ratios  $\frac{d_\mathbb{D}(x,x')}{d_S(f(x),f(x'))}$ and $\frac{d_S(f(x),f(x'))}{d_\mathbb{D}(x,x')}$ are bounded or unbounded (the distance in the disc being the Euclidean distance).
To the best of our knowledge, this problem is still not settled.

\section{Branched coverings}\label{s:Branched}
\subsection{The Riemann--Hurwitz formula}\label{s:Hurwitz}
The notion of branched covering\index{branched covering} between surfaces was conceived by Riemann.\index{Riemann--Hurwitz formula} It will be used at several places in the rest of this chapter. We shall review in particular a theorem of Thurston on branched coverings of the sphere in relation with the question of the topological characterization of rational maps\index{rational map!topological characterization} (\S \ref{s:Thurston}). Branched coverings also appear in the study of the type problem\index{type problem}\index{problem!type} (\S \ref{s:Nevanlinna}) and in the theory of dessins d'enfants (\S \ref{s:dessins}).

  A map $f: S_1\to S_2$ between two topological oriented surfaces is said to be a \emph{branched covering}\index{branched covering} if it satisfies the following:
  \begin{enumerate}
  \item  for some discrete subset $\{a_i\}_{i\in I}\subset S_2$,  the map $f$ restricted to  $f^{-1}\setminus \{a_i\}$  is a covering map in the usual sense;
  \item for each point  $x\in S_1$ which is the inverse image by $f$ of a point $a_i\in S_2$ for some $i\in I$, there exists an orientation-preserving homeomorphism $\phi: U\to D$  between an open  neighborhood $U$ of $x$ and an open neighborhood $D$ of $0$ in $\mathbb{C}$ satisfying  $\phi (x)= 0$, and an orientation-preserving homeomorphism  $\psi: V\to D$  between an open neighborhood $V$ of $a_i$ and $D$ satisfying  $\phi (a_i)= 0$, such that the mapping $\psi\circ f\circ\psi^{-1}$ defined on $D$ coincides with the restriction of the map $z\mapsto z^k$ from the complex plane to itself, for some integer $k\geq 1$.
  \end{enumerate}
  
 The integer $k$ in the above statement depends only on the point $x$ and is called the \emph{branching order} of $f$ at $x$.  
 If $k=1$, then $f$ is a local homeomorphism at $x$. If $k>1$, then we say that $x$ is a \emph{critical point}\index{critical point} of the covering and that $f$ is \emph{branched} at $x$. A point $a_i$ which is the image of a critical point  is called a \emph{critical value}.\index{critical value} The \emph{degree} of the branched covering\index{branched covering} $f$ is the usual degree of the covering induced by $f$ on the surface $f^{-1}\setminus \{a_i\}$, that is, the cardinality of the pre-image by $f$ of an arbitrary point in $S_2\setminus\{a_i\}$. This degree is also equal to the sum of the branching orders of the points in the pre-image of an arbitrary point $a_i$. The degree of a branched covering\index{branched covering} is 1 if and only if this covering is a homeomorphism.

 A meromorphic function on a Riemann surface defines a branched covering from this surface to the Riemann sphere. (Here, the adjective ``meromorphic" can be replaced by ``holomorphic" since the value $\infty$ is, from the complex structure, a point like any other point on the Riemann sphere.) Every connected compact Riemann surface $S$ admits a branched covering\index{branched covering} $f:S\to R$ over the Riemann sphere $R$, with the  conformal structure on $S$ induced by lifting by $f$ the conformal structure of the sphere. (This is one form of the so-called Riemann's Existence Theorem.)\index{Riemann existence theorem}
 Special cases of branched coverings that we shall discuss below are the Belyi maps\ref{Bel1980}:\index{Belyi map} holomorphic maps from a compact Riemann surface to the Riemann sphere with at most $3$ critical values (see \S \ref{s:dessins}).

 The question of the topological characterization of analytic functions among the maps from a surface onto the Riemann sphere was known under the name \emph{Brouwer problem} and it has many aspects, see the comments in \cite[p. xv]{Stoilov}. A theorem of Sto\"\i lov says that if $S$ is a topological surface, then a continuous mapping from $S$ to the Riemann sphere is a branched covering\index{branched covering} if and only if $f$ is open (i.e., images of open sets are open) and discrete (i.e., inverse images of points are discrete subsets of $S$, that is, each point is isolated in $S$). Furthermore, for each continuous mapping satisfying this property, there exists a unique complex structure on $S$ such that this mapping  is conformal \cite[Chap. V]{Stoilov}.

 Ren\'e Thom was interested in such questions. In his paper \cite{Thom}, he solves the following problem:
 
  \emph{Given $n$ complex numbers (which are not necessarily distinct), does there exist a polynomial of degree $n+1$ with these numbers as critical values?}
  
  Thom also considers the real analogue.
  At the end of his paper, he formulates the general problem:
  
 \emph{Given a $C^{\infty}$ function $f:\mathbb{R}^2\to\mathbb{R}^2$ with only isolated critical points, can we transform it, by diffeomorphisms of the source and of the target, into a holomorphic function?}
 
 Thurston\index{branched covering!postcritically finite!Thurston theorem} proved a fundamental result on the topological characterization of certain rational maps of the sphere, more precisely, he gave a characterization of postcritically finite branched coverings of the sphere by itself (branched coverings in which the forward orbits of the critical points are eventually periodic) that are topologically conjugate to rational maps. See \cite{Thurston83} and the exposition in \cite{BCL}.
   
If $S_1$ and $S_2$ are compact surfaces, there is a classical combinatorial relation that is satisfied by any branched covering $S_1\to S_2$, namely, the following:\index{Riemann--Hurwitz formula}

\begin{proposition}[Riemann--Hurwitz formula] 
For any branched covering $S_1\to S_2$  between compact surfaces, we have 
\[\chi(S_1)=d\cdot \chi(S_2)-\sum (k(p)-1)
,\] 
where $\chi$ denotes Euler characteristic, and where the sum is taken over the critical points in $S_1$, $k(p)$ being the branching order at $p$, for each critical point $p$.
\end{proposition}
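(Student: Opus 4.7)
The plan is to prove the formula by comparing Euler characteristics via a compatible CW decomposition, a strategy that exploits the multiplicative behaviour of the covering away from the critical points and carefully tracks the defect at critical points.

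First I would choose a triangulation (or more generally a CW decomposition) $T$ of $S_2$ with the property that every critical value of $f$ is a vertex of $T$. Such a triangulation exists because the critical values form a finite set (the branched covering has finite degree and $S_2$ is compact, so there are only finitely many critical points in $S_1$ and hence finitely many critical values in $S_2$). By refining if necessary, I may also assume each closed $2$-cell of $T$ is contained in the interior of an evenly covered open set, so that its preimage in $S_1$ is a disjoint union of $d$ homeomorphic copies.

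Next I would lift $T$ to a CW decomposition $\tilde T$ of $S_1$ by pulling back every cell of $T$ under $f$. Denote the numbers of vertices, edges, and faces of $T$ by $V_2, E_2, F_2$ and the corresponding numbers for $\tilde T$ by $V_1, E_1, F_1$. The interiors of edges and faces of $T$ contain no critical values, so $f$ is a genuine $d$-sheeted covering over each of them; thus $E_1 = d\, E_2$ and $F_1 = d\, F_2$. For a non-critical vertex $v$ of $T$ the preimage has exactly $d$ points, while for a critical vertex $a_i$ with preimages $p_1,\dots,p_m$ of branching orders $k(p_1),\dots,k(p_m)$, the local normal form $z\mapsto z^k$ gives $\sum_{j=1}^m k(p_j)=d$, so the number of preimages equals $d-\sum_{j=1}^m(k(p_j)-1)$. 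Summing over all vertices of $T$ yields
\[
V_1 = d\, V_2 - \sum_{p}\bigl(k(p)-1\bigr),
\]
where the sum runs over all critical points $p\in S_1$ (non-critical points contribute $0$).

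Finally, combining these counts gives
\[
\chi(S_1) = V_1 - E_1 + F_1 = d(V_2 - E_2 + F_2) - \sum_{p}\bigl(k(p)-1\bigr) = d\,\chi(S_2) - \sum_{p}\bigl(k(p)-1\bigr),
\]
which is the desired formula. The only delicate step is justifying that a triangulation of $S_2$ can be chosen with the critical values as vertices and with the preimage structure described; this is the main obstacle but is standard, relying on compactness of $S_2$, finiteness of the critical set, and the local model of a branched covering. Everything else is a straightforward bookkeeping of cells.
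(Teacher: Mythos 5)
Your proof is correct and takes essentially the same approach as the paper, whose proof is exactly this sketch: triangulate $S_2$ with the critical values among the vertices, lift the triangulation to $S_1$, and do the Euler characteristic count. You have merely supplied the bookkeeping the paper leaves implicit, in particular the vertex count $V_1 = d\,V_2 - \sum_p \bigl(k(p)-1\bigr)$ coming from $\sum_j k(p_j) = d$ in each fibre.
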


\begin{proof}[Sketch of proof] Consider a triangulation of $S_2$ whose set of vertices contains the set of critical values, and lift it  to a triangulation of $S_1$. The formula follows from an Euler characteristic count.
\end{proof}

A famous problem, called the Hurwitz problem,\index{Hurwitz problem} asks for a characterization of pairs of compact surfaces equipped with data satisfying  the Riemann--Hurwitz formula that can be realized by branched coverings, see e.g., \cite{PP1, PP2}.  

A different kind of realization problem for branched coverings was formulated and solved by Thurston, see \S \ref{s:Thurston} below. It uses an object we call a \emph{Speiser curve}\index{Speiser curve} which we introduce in the next subsection.

       \subsection{Speiser curves, Speiser graphs and line complexes}\label{s:Speiser}
 
  Let $S$ be a Riemann surface which is a branched covering $\psi:   S\to R$ of the Riemann sphere $R$  with finitely many critical values $a_1,\ldots,a_q\in R$, $q\geq 2$.   It is understood in such a setting that the complex structure of $S$ is induced from that of $R$, that is, the map $\psi$ is holomorphic.      
    We draw on $R$ a Jordan curve $\gamma$ passing through the points $a_1,\ldots,a_q$ in some cyclic order chosen arbitrarily and we rename these points accordingly. We equip this curve with the natural induced orientation. We call this curve a \emph{Speiser curve}.\index{Speiser curve} (About the name, see the historical note below on Andreas Speiser.) The curve $\gamma$ decomposes the sphere into two simply connected regions $U_1$ and $U_2$, each having a natural polygonal structure with vertices $a_1,\ldots,a_q$  and sides $(a_1a_2), (a_2a_3),\ldots (a_qa_1)$. The pre-image of $\gamma$ by the covering map $\psi$ is a graph $\Gamma$ which we call a \emph{Speiser graph}.\footnote{\label{n:Speiser}In the literature, sometimes the name ``Speiser graph"\index{Speiser graph} is used for what we call here ``line complex" (which is the name Nevanlinna uses). We reserve the name ``Speiser graph" for this lift of the Speiser curve,\index{Speiser graph} that we shall use below.} It divides the surface $S$ into cells that are also equipped with natural polygonal structures, each one projecting either to $U_1$ or to $U_2$.      A point in the pre-image of some point $a_i\in R$, $i\in I$, may be unbranched; at such a point, there are only two polygons glued. At a branch point of order $m-1$ in the pre-image of some $a_i$ (with $m\geq 2$), there are  $2m$ polygons glued.    
                     
Now, we construct a graph in $R$ which is dual to the Speiser curve $\gamma$. We start by choosing two points $P_1$ and $P_2$ in the interior of $U_1$ and $U_2$ repectively. Then, for each edge $(a_ia_{i+1})$ of $\gamma$, we join $P_1$ to $P_2$ by a simple arc that crosses $\gamma$  at a single point in the interior of this edge, in such a way that any two such simple arcs intersect only at their extremities. The union of these arcs  constitutes a graph dual to $\gamma$. Its lift by $\psi^{-1}$ is a graph $G$ embedded in $S$, called the \emph{line complex}\footnote{See Footnote \ref{n:Speiser}.} of the covering.

 The line complex $G$ is a homogeneous graph of degree $q$: each vertex is adjacent to $q$ edges. Choosing a black/white coloring for the two points $P_1$ or $P_2$, every vertex of $G$ is naturally colored using these two colors (depending on whether it is a lift of $P_1$ or of $P_2$). Thus, the line complex is bicolored,\index{bicolored graph} that is, its vertices can be colored using only two colors and no edge joins two vertices of the same color. A common notation for black and white vertices, for line complexes, is $\times, \circ$ (see Figures \ref{N3} and \ref{N4}, borrowed from \cite{N1970}).

 The connected components of the complement in $S$ of the line complex $G$ are called the \emph{faces} of $G$.  Around each vertex of the line complex $G$ there is always the same number of faces, appearing with their indices, in the cyclic order, $a_1,\ldots, a_q$. The image of each face of $G$ by the covering map $\psi$ contains a unique branch value $a_i$ in its interior. Such a face has a natural polygonal structure which it inherits from its image in $R$. This polygon may  be of three types:

    \begin{enumerate}
   \item \label{f1} A polygon with an even number $2m$ of sides: such a polygon contains a critical point of finite order equal to $m-1$  (the example in Figure \ref{N4} contains polygons with 2 and 4 sides);
        \item  \label{f2} A polygon with an infinite number of sides: such a polygon is unbounded in $S$  (the three examples in Figure \ref{N3} contain such polygons);   
                   \item a bigon: such a polygon contains an unbranch point over an $a_i$ (the example in Figure \ref{N4} contains bigons).
                        \end{enumerate}
        
         In case \ref{f1}, the face is said to be algebraic, and in case \ref{f2} it is said to be logarithmic (the terminology comes from the theory of Riemann surfaces associated with holomorphic functions).
         
Topologically, the branched covering $S$ of the sphere is uniquely determined by the points $a_1,\ldots,a_q$, the Jordan curve $\gamma$ joining them and the line complex $G$.   The line complex in the universal cover of the punctured surface encodes the way the various lifts of the polygons $P_1$ and $P_2$ (the complementary components of the Speiser curve) fit together.

The three line complexes represented in Figure \ref{N3}, with their coloring, correspond to the universal coverings of the sphere with 2, 3 and 4 punctures.  In these cases, the surface $S$ is simply connected and the associated line complex is regular. More generally, the line complex associated with the universal covering of the sphere punctured at $n\geq 2$ points is an $n$-valent regular graph. In terms of holomorphic functions, the line complex on the left hand side of the figure is associated with the Riemann surface of the exponential function. Figure \ref{N4} represents the line complex of the Riemann surface of the function $f(z)=ze^{-z}$, which is branched over three points of the sphere: the points $0$ and $\infty$, each of infinite order, and the point $1/e$, of order one.

\begin{figure}[h]
\centering
\includegraphics[scale=1.3]{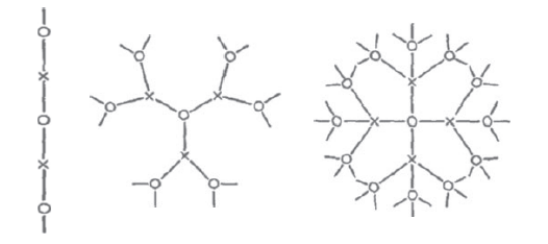}
\caption{\small Three examples of regular line complexes with their coloring. The associated branched coverings, restricted to the complement of the branch points, are universal coverings of the sphere with 2, 3 and 4 punctures respectively}\label{N3}
\end{figure}

\begin{figure}[h]
\centering
\includegraphics[scale=1.3]{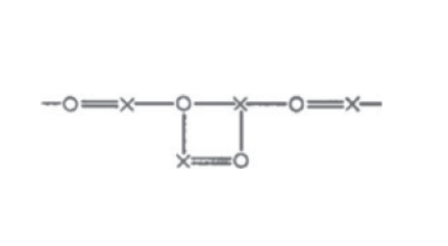}
\caption{\small The line complex of the Riemann surface of the function $f(z)=ze^{-z}$}\label{N4}
\end{figure}

There are sections on line complexes in Nevanlinna's book \cite{N1970}, in Sario and Nakai's book \cite{SN} and in L. I. Volkovyskii's comprehensive survey on the type problem \cite{Volkovyskii}. 
We shall return to line complexes in \S \ref{s:Nevanlinna}.
   \medskip
 
\noindent {\bf Historical note}  Andreas Speiser (1885--1970)\index{Speiser, Andreas} was a Swiss mathematician who studied in G\"ottingen, first with Minkowski and then with Hilbert. The latter became his doctoral advisor for a few months, after the death of Minkowski in 1909 (at age 44). Speiser defended his PhD thesis, in G\"ottingen, in 1909 and his habilitation in 1911, in Strasbourg. He worked in group theory, number theory and Riemann surfaces, and was the main editor of Euler's \emph{Opera Omnia}. He edited 11 volumes and collaborated to 26 others of this huge collection.  Ahlfors  writes in his comments to his \emph{Collected works} \cite[vol. 1, p. 84]{Ahlfors-Collected}, that the type\index{type problem}\index{problem!type} problem, which we shall review in \S \ref{s:Type}, was formulated for the first time by Andreas Speiser.\index{Speiser, Andreas} The latter introduced line\index{line complex} complexes (in a version slightly different from the one we use here) in his papers \cite{Speiser1929} and \cite{Speiser1930}.
 In his paper \cite{Ahlfors1982}, Ahlfors writes: ``Around 1930 Speiser had devised
a scheme to describe some fairly simple Riemann surfaces by means of a graph
and had written about it in his semiphilosophical style".  Ahlfors adjective is related to the fact that Speiser's articles do not contain  formulae.

\subsection{Thurston's realization theorem}\label{s:Thurston}

A branched covering from the sphere to itself can be iterated (composed with itself). The \emph{postcritical set}\index{postcritical set} of such a branched covering is the union of the set of critical points with its forward images by the mapping. A branched covering is said to be \emph{postcritically finite}\index{postcritically finite} if its postcritical set is finite.
                           In 1983, Thurston obtained a characterization of branched coverings  $f:S^2\to S^2$ of a topological 2-sphere that are topologically equivalent to postcritically finite rational maps of the Riemann sphere, that is, quotients of two polynomials (see \cite{Thurston83} and the exposition in \cite{BCL}). In 2010, while he was considering again the question of understanding holomorphic mappings from the topological point of view, Thurston obtained a realization theorem\index{branched covering!realization theorem} for branched coverings of the sphere which we present now. The result involves the
      Speiser graph\index{Speiser graph} associated with a branched covering of the sphere that we defined in \S \ref{s:Speiser}. We now state this result.

Let $f:S^2\to S^2$ be a generic degree $d$ branched covering of the topological  sphere $S^2$. Here, the word generic means that the cardinality of the set of critical values of $f$ is $2d-2$ (which is the largest possible). This implies that the cardinality of the set of critical points is also $2d-2$. Every critical point of such a map is simple (of order 1).
 
Let $\gamma$ be a Speiser curve  associated with this covering, equipped with its orientation,  and consider its lift $\Gamma= f^{-1} (\gamma)$. This is a graph we called Speiser graph.\index{Speiser graph} Its vertices are all of valence 4 (we do not take into account lifts of the critical values that are not critical points).

The Speiser curve\index{Speiser curve} $\gamma$ divides the sphere into two connected components which are  naturally equipped with polygonal structures. We color these components in blue and white. Each connected component of the complement of   $\Gamma$ is sent by $f$ to a connected component of $\gamma$, and therefore it carries naturally a color and a polygonal structure. We call such a component a \emph{face} of $S^2\setminus\Gamma$. Thurston addressed the following question:

 \emph{To characterize the oriented 4-valent graphs in $S^2$ that can be obtained by the above construction, that is, as a Speiser graph\index{Speiser graph} associated with some branched covering $f:S^2\to S^2$.} 
 
 He proved the following:

\begin{theorem}[Thurston, see \cite{Koch-Lei}]\label{th:Thurston}
Let $\Gamma$ be an oriented graph on $S^2$, with $2d-2$ vertices, all of valence 4. Then $\Gamma$  is a Speiser graph\index{Speiser graph} of a degree-$d$ branched covering of the sphere onto itself if and only if:
\begin{enumerate}
\item each face  of $S^2\setminus\Gamma$ is simply connected;
\item \label{balance1}  for any alternating blue-white coloring of the faces of $S^2\setminus\Gamma$, there are $d$ white faces and $d$ blue faces;
\item \label{balance2}  for every oriented Jordan curve embedded in $\Gamma$ and bordered by only blue faces on the left and only white faces on the right (excluding the corners), the Jordan disc bounded on the left by this curve contains strictly more blue faces than white faces.
\end{enumerate}
\end{theorem}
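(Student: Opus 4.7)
The plan is to prove the two directions separately. The necessity of Conditions 1--3 is straightforward from the definition of the Speiser graph, while the construction of a realizing branched cover in the sufficiency direction contains the substantive content.

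\textbf{Necessity ($\Rightarrow$).} Suppose $\Gamma=f^{-1}(\gamma)$ for some generic degree-$d$ branched cover $f\colon S^2\to S^2$. Each face of $S^2\setminus\Gamma$ is a connected component of $f^{-1}(U_j)$ for some $j\in\{1,2\}$; because $U_j$ is a simply connected disc and $f$ is unbranched over $U_j$ (the critical values all lying on $\gamma$), each such component is a disc homeomorphic to $U_j$ under $f$. This immediately yields Condition~1 and, by counting sheets, Condition~2: there are exactly $d$ components above $U_1$ and $d$ above $U_2$. For Condition~3, take an oriented Jordan curve $C\subset\Gamma$ with blue on its left and let $D$ be the disc it encloses. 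Since $f$ is orientation-preserving and sends blue faces into $U_1$, the map $f|_C\colon C\to\gamma$ has positive degree $k\ge 1$, and a standard mapping-degree computation applied to $f|_D\colon D\to S^2$ yields $b-w=k\ge 1$, where $b$ and $w$ are the numbers of blue and white faces contained in $D$.

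\textbf{Sufficiency ($\Leftarrow$).} Now assume Conditions 1--3. Fix in the target $S^2$ a Jordan curve $\gamma$ through $2d-2$ ordered marked points $a_1,\dots,a_{2d-2}$, with complementary discs $U_1,U_2$ colored blue and white. By Conditions~1 and~2, $\Gamma$ has $d$ blue and $d$ white faces, each a disc. The goal is to define a continuous map $f\colon S^2\to S^2$ that restricts to a homeomorphism on each face (blue to $U_1$, white to $U_2$) and has the local form $z\mapsto z^2$ at every vertex. I would begin by orienting each edge of $\Gamma$ so that the blue face lies on the left; this produces at every vertex an alternating in/out/in/out pattern of the four incident edges, so that opposite edges are naturally paired.

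The combinatorial heart of the construction is to specify, for each vertex $v$, a critical value $f(v)=a_{i(v)}$, together with, for each edge of $\Gamma$, an ordered sequence of $\gamma$-arcs that it traverses, in such a way that (i) at $v$ the two pairs of opposite edges map to the two $\gamma$-arcs incident to $a_{i(v)}$, and (ii) the boundary cycle of each blue (resp.\ white) face traverses $\gamma$ exactly once in the positive (resp.\ negative) direction. I would phrase this as a flow-feasibility problem on the oriented planar graph $\Gamma$, with the $d$ blue faces as sources and the $d$ white faces as sinks, so that Condition~3 becomes precisely the Hall-type (or max-flow--min-cut) cut condition that guarantees existence of a compatible labeling. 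Once the labeling is obtained, $f$ is built face by face: each face is mapped by a chosen homeomorphism onto its target disc, the prescribed boundary labels ensure continuity across each edge, and the valence-four opposite-pair structure at each vertex delivers the local model $z\mapsto z^2$ automatically.

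\textbf{Main obstacle.} The hard step is verifying that Condition~3 really is strong enough, i.e.\ that every obstruction to a compatible labeling can be traced back to a Jordan cycle violating it. I would argue by contradiction, assuming that no consistent labeling exists and unwinding the local failure along $\Gamma$ to extract a blue-on-left Jordan subcurve whose enclosed disc contains at most as many blue as white faces. Managing the global topology of $\Gamma$'s complement while performing this extraction, and ensuring that the resulting cycle is genuinely a \emph{simple} Jordan curve in $\Gamma$ (rather than a more general closed walk), is the delicate part of the argument and the place where the full strength of the planarity of $\Gamma$ in $S^2$ has to be used.
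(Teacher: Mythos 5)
Your necessity direction is essentially correct: faces of $S^2\setminus\Gamma$ are components of $f^{-1}(U_1)$ or $f^{-1}(U_2)$, mapped homeomorphically since all critical values lie on $\gamma$, which gives Conditions 1 and 2, and your degree computation for Condition 3 is right --- in relative homology, $f_*[D,\partial D]=b\,[\bar U_1]+w\,[\bar U_2]$, whose boundary gives $b-w=\deg(f|_C)\geq 1$, the positivity of $\deg(f|_C)$ coming from the orientation of the edges of $\Gamma$ and the blue-on-left hypothesis.

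The genuine gap is in the sufficiency direction, which is where the entire content of the theorem sits. You reformulate the construction as a labeling/flow-feasibility problem and then \emph{assert} that Condition 3 ``becomes precisely'' the Hall-type cut condition; this equivalence is never established, and your final paragraph openly concedes the decisive step: showing that any obstruction to a compatible labeling can be converted into a genuinely \emph{embedded} Jordan curve in $\Gamma$, with blue faces only on its left and white only on its right (excluding corners), whose disc violates Condition 3. That conversion is the hard part of the theorem: a violated cut in a flow network is a priori just an edge set, and closed walks in a $4$-valent planar graph need not be simple, so producing a simple curve with the required two-sided coloring demands a real planarity argument (e.g.\ an innermost/minimal-counterexample analysis), which you do not supply. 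As it stands, your text is a correct statement of a strategy, not a proof. For orientation: the proof the paper points to (Koch and Tan, following Thurston) runs the combinatorics through Hall's marriage theorem, with Condition 3 furnishing the Hall inequality, so your max-flow formulation is the right family of ideas --- max-flow/min-cut and Hall are interchangeable here --- but in either formulation the verification you postpone is the substance. Two smaller omissions: the assignment $v\mapsto a_{i(v)}$ must be a \emph{bijection} onto the $2d-2$ critical values (genericity forces exactly one critical point over each critical value), and the boundary walk of a single face can pass through the same vertex twice (via opposite corners), so ``continuity across each edge'' is not enough --- you must check cyclic compatibility of the labels along the entire boundary walk of each face, not merely edge by edge.
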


Conditions \ref{balance1} and \ref{balance2} are considered as \emph{balance conditions.}

Thurston's proof of Theorem \ref{th:Thurston} given in \cite{Koch-Lei} is based on the so-called ``marriage theorem"\index{marriage theorem} (due to Philip Hall) from combinatorics, a theorem dealing with finite subsets of a given set; it gives a necessary and sufficient condition on such a family that guarantees the choice of one element from each set such that these elements are all distinct (monogamy is the rule). The authors in \cite{Koch-Lei} ask for a better proof which would give an algorithm for constructing arbitrary rational functions whose critical values are on the unit circle
\cite[p. 255]{Koch-Lei}.

The Riemann--Hurwitz formula,\index{Riemann--Hurwitz formula} which we recalled in \S \ref{s:Hurwitz}, becomes in the case at hand, i.e., the case where $f$ is a degree-$d$ branched cover of the sphere by itself:
\[2d-2= \sum(k(p)-1).\]
 In this formula, the sum is taken as before over the critical points $p$, and $k(p)$ is the branching order at $p$. (This formula follows from the fact that the Euler characteristic of both surfaces is 1.) Written in this form, the Riemann--Hurwitz formula\index{Riemann--Hurwitz formula} says that the branching information at the various points forms a partition of the integer $2d-2$. The question of what are the partitions that are realized by a branched covering of the sphere to itself is still open. (Cases where the map does not exist are known to occur.)

 Thurston's result is part of his project of understanding the shapes of rational functions, where by ``shape" he means the evolution of the critical levels of such a function (see his comments on this problem in a MathOverflow thread he started in 2010, titled \emph{What are the shapes of rational functions?}). 
 
 The paper \cite{A2017} which contains a stratification of spaces of monic polynomials of a give degree is inspired by Thurtson's question. See also \ref{BE2009}

\section{The type problem}\label{s:Type}

   The Uniformization\index{type problem} Theorem\index{uniformization theorem} (Theorem \ref{t:uniformization}) says that every simply connected Riemann surface $S$ is either conformally equivalent to   the Riemann sphere, or to the complex plane, or to the unit disc. In the first (respectively, second, third) case, $S$ is said to be of elliptic (respectively, parabolic, hyperbolic) type.\index{parabolic type}\index{type!parabolic}\index{elliptic type}\index{type!elliptic}\index{hyperbolic type}\index{type!hyperbolic} The names  stem from the fact that the Riemann sphere, the complex plane and the unit disc are ground spaces for elliptic, parabolic and hyperbolic geometry respectively (elliptic and parabolic geometries are  alternative names for spherical and Euclidean geometries). The first case is distinguished from the other two by topology, since in this case $S$ is compact, whereas in the two other cases it is not. 
   
   The \emph{type problem} asks for a practical way to decide whether a given simply connected non compact  Riemann surface is of parabolic or elliptic type. 
   
   The answer  depends on the context in which the surface is defined. For instance, the surface may be embedded in Euclidean 3-space and equipped with the induced metric, or it may be equipped with some abstract Riemannian metric, or it may be obtained by pasting smaller surfaces (Euclidean or non-Euclidean polygons, etc.). It may also be given as an infinite branched covering of the sphere with some data at each branching point, or it may be  obtained by analytic continuation, like the universal cover of the Riemann surface of some holomorphic function. One can also imagine other situations. The multiplicity of these cases explains the variety of ways and techniques in which the type problem\index{type problem}\index{problem!type} has been addressed; we shall see some examples below.
   
   \subsection{Ahlfors on the type problem}
       Ahlfors\index{type problem!Ahlfors}\index{problem!type} emphasized at several occasions the importance of the type problem, which was one of his main research topics during the period 1929--1941. In his paper \cite{Ahlfors1932}, talking more precisely about the question of deducing the type of a Riemann surface associated with a univalent function, from the distribution of its singularities, he writes: ``This problem is, or ought to be, the central problem in the theory of functions. It is evident that its complete solution would give us, at the same time, all the theorems which have a purely qualitative character on meromorphic functions.''       
           Nevanlinna,  in the introduction of his book \emph{Analytic functions} (1953), ), writes (p. 1 of the English edition \cite{N1970}): ``[\ldots] Value distribution theory is thus integrated into the general theory of conformal mappings. From this point of view the central problem of the former theory is the \emph{type problem},\index{type problem}\index{problem!type} an interesting and complicated question, left open by the classical uniformization theory.''\index{uniformization}   
           
                 One of Ahlfors' earliest results on the type problem is a theorem contained in his paper 
  \cite{Ahlfors1931}. It gives the following condition for a simply connected surface $S$ which is a branched covering of the sphere whose branch values are all of finite degree, to be parabolic.\index{type!parabolic}\index{parabolic type} See also \ref{Ahlforsthesis}.
  
  \begin{theorem}[Ahlfors, \cite{Ahlfors1931}] \label{th:Ahlfors} Let $S$ be a simply connected surface which is a branched covering of the sphere with all branch values of finite degree, let $p$ be an arbitrary point in $S$ and let $n(t)$ $t\geq 0$ be a function defined on $S$ which associates with $t$ the number of branch points, counted with multiplicity, at distance $\leq t$ from $p$. If
  \[\int_a^\infty\frac{dt}{tn(t)}=\infty\] for some (or equivalently for all) $a>0$, then $S$ is of parabolic type.
  \end{theorem}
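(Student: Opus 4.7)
I would argue by contradiction: suppose $S$ is of hyperbolic type. By the Uniformization Theorem (Theorem \ref{t:uniformization}) there is then a biholomorphism $\psi: \mathbb{D} \to S$ with $\psi(0) = p$. Setting $f = \pi \circ \psi : \mathbb{D} \to \hat{\mathbb{C}}$, where $\pi: S \to \hat{\mathbb{C}}$ is the given branched covering, we get a holomorphic map whose critical points in $\mathbb{D}$, counted with multiplicity, correspond exactly to the branch points of the covering. The strategy is to derive, from the hyperbolic assumption, a quantitative bound which makes $\int_a^\infty dt/(tn(t))$ finite, contradicting the hypothesis.

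The engine is a length-area comparison. Let $N(r)$ be the number of critical points of $f$ in the Euclidean sub-disc $\{|z| < r\} \subset \mathbb{D}$. The distance $t$ on $S$ that enters the theorem is a natural one induced by the covering (essentially a pulled-back spherical or hyperbolic-metric distance); via the biholomorphism $\psi$, it determines a monotone correspondence $r \leftrightarrow t(r)$ such that $N(r) \leq n(t(r))$. The crux of the proof is then to combine two facts: (i) a holomorphic map of the disc can cover only a limited amount of area, quantified by the hyperbolic modulus of $\{|z| < r\}$, which grows like $\log 1/(1-r)$; and (ii) a critical point of multiplicity $k$ inside $\{|z|<r\}$ forces at least $k+1$ sheets over a neighborhood of its image, contributing rigidly to the total area. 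The ratio of these two facts gives a differential inequality for $N(r)$, from which one extracts a convergence statement for an integral of the form $\int_0^1 dr/(r N(r))$ against the radial variable.

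To convert this into a statement about $t$, I would set up the right change of variables $r \mapsto t(r)$. Under this change the extremal-length (or modulus) comparison between nested annuli in $\mathbb{D}$ and their images in $S$ is conformally invariant, so the finiteness of $\int_0^1 dr/(rN(r))$ translates into the finiteness of $\int_a^\infty dt/(tn(t))$, producing the contradiction. Equivalently, one can phrase the argument directly on $S$ via extremal lengths: the family of curves separating the point $p$ from the level set $\{d_S(\cdot,p) = t\}$ has extremal length bounded above (since $S = \mathbb{D}$ would make it comparable to $\log 1/r$), while the presence of $n(t)$ branch points inside the ball of radius $t$ forces the extremal length to grow essentially like $\int^t ds/(sn(s))$; divergence of this integral is the desired contradiction.

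The main obstacle is (i) identifying precisely the metric used to define the distance $t$ so that the length-area comparison has the right form, and (ii) establishing the exact inequality relating branch-point counts to conformal modulus. The natural tool for the latter is the nascent Ahlfors theory of covering surfaces: one introduces a characteristic-type function whose derivative measures average covering, bounds it from above by the hyperbolic area element $(1-r^2)^{-2}r\,dr$ on $\mathbb{D}$, and bounds it from below by a count involving $N(r)$. Once these bounds are in hand, integrating by parts against $dr/r$ (the conformal weight on concentric annuli) yields a finite upper bound for $\int^1 dr/(rN(r))$, and the change of variables closes the argument.
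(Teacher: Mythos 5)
The paper states this theorem without proof, merely citing Ahlfors' 1931 article, so your proposal has to stand on its own. Its outer skeleton is the classically correct one: an extremal-length (length--area) comparison together with the Gr\"otzsch-type dichotomy that in $\mathbb{D}$ the modulus of the annulus between a fixed circle and the ideal boundary is \emph{finite}, whereas a divergent sum of moduli forces parabolic type; your closing sentence, phrased directly on $S$, is essentially Ahlfors' argument. But the engine you build in the middle to justify the key estimate fails at two points. First, your fact (i) is false: there is no a priori bound on the spherical area covered by a holomorphic map $f:\mathbb{D}\to\mathbb{P}^1(\mathbb{C})$. The Ahlfors--Shimizu (Nevanlinna) characteristic of a meromorphic function on the disc can grow arbitrarily fast (think of an infinite Blaschke product, or the universal cover of the thrice-punctured sphere), so nothing like ``covered area $\lesssim \log 1/(1-r)$'' holds, and no contradiction can be extracted from an area bound; what separates the disc from the plane is a modulus estimate, not an area estimate. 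Second, your fact (ii) points the wrong way: a critical point forcing extra sheets over its image is a \emph{lower} bound on sheet number, whereas the proof needs an \emph{upper} bound, and in any case an upper bound on $N(r)$ works against you, since the hyperbolic assumption must force $n(t)$ to be \emph{large} (fast branching) for $\int_a^\infty dt/(t\,n(t))$ to converge. Relatedly, your phrase ``the presence of $n(t)$ branch points\dots forces the extremal length to grow'' inverts the monotonicity: branch points lengthen the level curves and hence \emph{decrease} the modulus lower bound; it is the scarcity of branching that makes the extremal distance infinite.

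The missing idea, which nothing in your sketch supplies, is the Riemann--Hurwitz/Euler-characteristic control coming from simple connectivity of $S$ (in your proposal simple connectivity enters only through uniformization). Concretely: the metric ball $B(p,t)$ in the pulled-back spherical metric is an open connected subsurface of the simply connected $S$, hence $\chi(B(p,t))\le 1$, and Gauss--Bonnet with cone points (equivalently Riemann--Hurwitz) then bounds the covering over $B(p,t)$ by roughly $n(t)+1$ sheets and yields the circumference bound $L(t)\le 2\pi t\,(n(t)+1)$ for the level curve $\{d(\cdot,p)=t\}$. Feeding the admissible metric $\rho = |dt|/L(t)$ into the modulus gives $\mathrm{mod}\,A(t_1,t_2)\ \ge\ c\int_{t_1}^{t_2} \frac{dt}{t\,(n(t)+1)}$, whose divergence (equivalent to that of your hypothesis integral once $n\ge 1$) makes the extremal distance from a compact disc to the ideal boundary infinite --- impossible if $S\cong\mathbb{D}$, where that distance is $\frac{1}{2\pi}\log\frac{1}{r_0}<\infty$. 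With this replacement for your steps (i)--(ii), the change of variables $r\leftrightarrow t(r)$ and the characteristic-function machinery become unnecessary, and your contradiction framing closes correctly.
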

  
  Thus, the theorem says that if the amount of branching is relatively small, the surface is parabolic.   
  We shall see below other results on the type of branched coverings of the sphere that are also expressed in terms of the growth of the amount of branching. Heuristically, one can consider that the degree of branching measures a certain degree of negative curvature concentrated at the branch points, and  consequently, the larger the amount of branching is, the more negative curvature is concentrated at that point; this explains the fact that a surface with a large amount of branching is hyperbolic.
  
   In the setting of function theory, one also admits branch values of infinite degree, modeled on the ones that occur in the theory of Riemann surfaces of multi-valued analytic functions. The complex logarithm function  is an example of a multivalued function which gives rise to a branch value of infinite degree.

  In the next 3 subsections, we shall describe different approaches used by various authors to the study of the type problem.\index{type problem}\index{problem!type} They involve combinatorial, analytical and geometrical methods.

\subsection{Nevanlinna on the type problem}\label{s:Nevanlinna}
 
 Nevanlinna,\index{type problem!Nevanlinna}\index{problem!type} in the papers \cite{N1932a, N1932, N1970} considered in detail the type problem for simply connected surfaces that are branched coverings of the sphere with a finite number of critical values. Soon after, Teichm\"uller dealt with the same problem in his papers \cite{T9} and \cite{T13}, disproving a conjecture made by Nevanlinna. The fundamental tool that is used in these works is the line complex\index{line complex} that we recalled in \S \ref{s:Speiser}.    
 
  Nevanlinna formulated a principle which amounts to formalizing the fact that the type of a simply connected Riemann surface $S$ which is a branched covering of the Riemann sphere may be deduced from information on the associated  line complex.\index{line complex} The faces of this graph (i.e., the connected components of its complement) are in one-to-one correspondence with the branch points of the covering, if one discards bigons. A polygon with $2m$ sides is associated with a branch point of order $m-1$. Roughly speaking, Nevanlinna's claim is that if the amount of branching of the covering is small, the surface is of parabolic type, and if the amount of branching  is large, the surface is of hyperbolic type. He showed how the amount of branching can be deduced from the properties of the line complex.\index{line complex}  Nevanlinna writes, in  \cite[p. 308]{N1970}: ``It is thus natural to imagine the existence of a critical degree of branching that separates the more weakly branched parabolic surfaces from the more strongly branched hyperbolic surfaces." To make this precise, he introduced the notions  of mean branching and mean excess relative to the line complex.\index{line complex} These are analogues of the notion of global mean curvature of a surface. We shall recall his definition.
    
    To motivate the above principle, Nevanlinna first considered the case of a surface $S$ which, instead of being an infinite-sheeted  covering of the Riemann sphere, is finite-sheeted. In this case, the \emph{mean branching} is simply the sum of the orders of the  branch points divided by the degree of the covering.   
         
     In this case, the mean branching can also be computed using the line complex $G$.\index{line complex} 
 The number of vertices of the line complex is equal to twice the number of sheets. Indeed, if $d$ is the number of sheets in the covering,   then, using the notation in \S \ref{s:Speiser}, there are $d$ vertices which are lifts of  the point $P_1$ and $d$ others which are lifts of the point $P_2$. Each branch point of order $m-1$ is in the interior of a polygonal component of $S\setminus G$ having $2m$ sides and $2m$  vertices. Considering that twice the order of this branch point, that is, $2m-2$, is evenly distributed among these $2m$ vertices, each vertex of this polygon receives from the polygon itself a branching contribution equal to $\frac{2m-2}{2m}=1-\frac{1}{m}$. Taking the sum over the contributions coming from all the adjacent polygons, each vertex of the line complex\index{line complex} receives a total branching contribution equal to $\sum (1-\frac{1}{m})$, where the sum is over all the polygons adjacent to this vertex. Dividing by the total number of vertices, which is twice the number of sheets,  gives the  mean branching. Nevanlinna uses this second way of counting the mean branching for the definition of the mean branching in the case of an infinite covering. 
   
 Nevanlinna considered first the case of a \emph{regularly} ramified surface. This is a surface whose group of covering transformations acts transitively on the set of branch points (therefore it acts transitively on the line complex).\index{line complex} In this case, the mean ramification is equal to the ramification of an arbitrary polygonal component in the complement of the line complex. 
  He proved:
     
     \begin{theorem}[Nevanlinna \cite{N1970}, p. 311]
     For a regularly ramified surface, the following holds:
     \begin{enumerate}
   
     \item the surface is parabolic if the mean ramification is $2$;
    \item he surface is hyperbolic if the mean ramification is $>2$;
      \item The surface is elliptic if the mean ramification is $<2$;
        
     \end{enumerate}
     
     \end{theorem}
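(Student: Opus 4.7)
The plan is to realize $S$ geometrically as a tiling by regular polygons of constant curvature, and then read off the conformal type from the angle-sum condition at each vertex of the line complex. Since the ramification is regular, the deck group of the covering $S \to R$ acts transitively on the vertices of the line complex, so every vertex has the same cyclic sequence of adjacent faces $2m_1, 2m_2, \ldots, 2m_q$, and the mean ramification at every vertex is the same number $R = \sum_{i=1}^q (1-1/m_i)$.

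The key geometric input is the following classical fact: for each integer $m \geq 2$ and each $\alpha \in (0, \pi)$, there exists, in exactly one of the three model geometries, a regular $2m$-gon with interior angle $\alpha$, namely spherical if $\alpha > \pi(1-1/m)$, Euclidean if $\alpha = \pi(1-1/m)$, and hyperbolic if $\alpha < \pi(1-1/m)$. First I would choose a common curvature $\kappa \in \{+1, 0, -1\}$ and angles $\alpha_1, \ldots, \alpha_q$, one for each face-type, so that $\sum_i \alpha_i = 2\pi$. When $R = 2$ the Euclidean choice $\alpha_i = \pi(1-1/m_i)$ works directly; when $R > 2$ one takes $\kappa = -1$ and shrinks the angles uniformly in the allowed hyperbolic range until the sum is exactly $2\pi$; when $R < 2$ one takes $\kappa = +1$ and enlarges them. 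In each case, I would then glue the resulting constant-curvature polygons along the combinatorics of the line complex by edge-isometries. Because the interior angles sum to $2\pi$ around every vertex, no conical singularity is created, and the result is a genuine Riemannian metric $g$ on $S$ of constant curvature $\kappa$, automatically in the same conformal class as the almost complex structure $J$ of $S$ (the gluing is conformal on the boundary since it is an isometry).

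The final step is to invoke the Uniformization Theorem (Theorem \ref{t:uniformization}) for the simply-connected Riemann surface $(S,J)$. Since $g$ is a complete metric of constant curvature $\kappa$ compatible with $J$, the surface $(S,J)$ must be conformally equivalent to the corresponding model of curvature $\kappa$: the Riemann sphere in the elliptic case ($R < 2$), the complex plane in the parabolic case ($R = 2$), and the unit disc in the hyperbolic case ($R > 2$).

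The main obstacle is verifying that the glued metric $g$ is \emph{complete}, which is what actually distinguishes the parabolic and hyperbolic cases (a non-complete flat metric on $\mathbb{D}$ would not force $S$ to be parabolic). When all $m_i$ are finite, completeness follows from the properly discontinuous cocompact action of the deck group on $(S, g)$ together with compactness of the fundamental polygon. The delicate point is the presence of logarithmic ends ($m_i = \infty$), for which the corresponding faces become ideal polygons with cusps or funnels; one must check that the contribution $1 - 1/m_i \to 1$ is interpreted correctly in the angle balance and that the resulting cusps still yield a complete metric. This is where a careful case analysis is needed, but in the strictly regular setting considered by Nevanlinna the symmetry of the tiling makes completeness automatic from the existence of an isometric action with compact (or finite-area) quotient.
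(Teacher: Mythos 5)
Your proposal is correct in outline and rests on the same core idea as the paper's (Nevanlinna's) argument---realize the combinatorics by a tiling of constant-curvature regular polygons, let the sign of the angle excess (equivalently, mean ramification versus $2$) dictate spherical, Euclidean or hyperbolic geometry, and conclude by uniformization---but you run it on the dual decomposition. You tile by the faces of the line complex, i.e.\ the $2m_i$-gons, and tune their interior angles $\alpha_i$ so that $\sum_{i=1}^{q}\alpha_i=2\pi$ at each vertex of the line complex; the paper instead metrizes the $q$-gonal cells of the Speiser decomposition, fixing the angle at a vertex lying over a branch point of order $m-1$ to be exactly $\pi/m$, so that the $2m$ cells meeting there close up to $2\pi$ automatically, and the type is read off from the excess $\pi(\sum 1/m_i - q + 2)=\pi(2-R)$ of a single polygon. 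The paper's normalization buys something your version must still work for: in the regularly ramified case all its $q$-gons are congruent, so glued edges match with no further condition, whereas your prescription to ``shrink the angles uniformly'' does not by itself make the side lengths of the different regular $2m_i$-gons equal, and an edge-isometry gluing requires exactly that; the fix is to parametrize by a common side length $\ell$ and apply the intermediate value theorem to $\ell\mapsto\sum_i\alpha_i(\ell)$, which decreases continuously from $\pi R>2\pi$ to $0$ in the hyperbolic case (and increases from $\pi R<2\pi$ in the spherical one). On the credit side, your explicit attention to completeness and to the ideal vertices arising from logarithmic branch points ($m_i=\infty$) addresses a point the paper's sketch passes over in silence. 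Note, finally, that both your argument and the paper's leave implicit why the glued constant-curvature surface is conformally equivalent to the original covering surface $(S,J)$: one either maps each tile conformally onto its image (a hemisphere bounded by the Speiser curve, in the paper's version) with vertices corresponding, and invokes Schwarz reflection across the edges, or one appeals to quasiconformal invariance of type as in \S\ref{Teich}.
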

     \begin{proof} In his proof, Nevanlinna introduces a metric on each polygonal component of the complement of the line complex in which the measure of the angle at each vertex of such a polygon corresponding to a branch point of order $m-1$ is equal to $\pi/m$. 
     
     The excess of a polygon is equal to $\sum\frac{1}{m}-q+2$ where $q$ is as before the cardinality of the set of critical values, which is also the number of sides of the polygon. Since we are in the case of a regularly  ramified surface, the mean ramification is equal to the order of ramification at each branched point.

     In case 1, the angle excess of each polygon is zero and we can realize this polygon as a regular polygon in the Euclidean plane. 
     In case 2, the angle excess of each polygon is negative and we can realize it as a regular polygon in the hyperbolic plane. Case 3 corresponds to spherical geometry.   
     \qed
     \end{proof}
     
     In the case of an infinitely-branched covering, the average of the branching over all the vertices of $G$ is obtained by taking an exhaustion of this graph by an infinite number of finite sub-graphs, and taking the lower limit of a mean of the sum of the total branchings of the vertices \cite[p. 309ff]{N1970}.
     The conjecture is formulated by Nevanlinna as a question in  \cite[p. 312]{N1970} in the following terms: 
   \emph{Is the surface parabolic or hyperbolic according as the mean excess is zero or negative?} After formulating this question,     Nevanlinna notes that  Teichm\"uller in his paper \cite{T13}, disproved the conjecture by exhibiting a hyperbolic simply connected Riemann surface branched over the sphere with a line complex\index{line complex!mean excess} 
 whose mean excess\index{mean excess} is zero.

\subsection{Quasiconformal mappings and Teichm\"uller's work on the type problem}\label{Teich}
 
 In the\index{type problem!Teichm\"uller}\index{problem!type} course of proving the above result, Teichm\"uller used quasiconformal mappings, and we next recall this notion.
 
 To  a differentiable complex-valued function $f(z)=u(x,y)+iv(x,y)$ defined on an open subset $\Omega$ of the complex plane (or on a Riemann surface), one asssociates its \emph{complex dilatation} by taking first the partial derivatives 
 $f_x=u_x+iv_x$ and $f_y=u_y+iv_y$, setting 
 \[f_z=\frac{1}{2}(f_x-if_y)\]
 and 
 \[f_{\overline{z}}=\frac{1}{2}(f_x+if_y)\] and defining the \emph{complex dilatation} of $f$
 as
 \[\mu(z)=\frac{f_{\overline{z}}(z)}{f_z(z)}
.\]

 The (real) \emph{dilatation} of $f$ is then given by the real function
 \[K(z)=\frac{1+\vert \mu(z)\vert}{1-\vert \mu(z)\vert}
 \]
and $f$ is said to be  \emph{$K$-quasiconformal}, for some $K>0$, if this function satisfies
\[K(z)<K\]
for all $z$ in $\Omega$.

  In practice, one allows $f$ to be only  absolutely continuous on almost every line which is   parallel to the real or imaginary axis. This is sufficient for  the partial derivatives $\frac{\partial f}{\partial x}$ and  $\frac{\partial f}{\partial y}$ to exist almost everywhere.
  
  The dilatation is then defined a.e., and the last inequality needs only to be satisfied a.e.
  
A computation shows that the \emph{dilatation} of $f$ at an arbitrary point is equal to the ratio of the major axis to the minor axis of the  ellipses that are images by the differential of the map at that point, of circles in the tangent space centered at the origin. Therefore, a diffeomorphism $f$ between two Riemann surfaces is $K$-quasiconformal if it takes an infinitesimal circle to an infinitesimal ellipse of eccentricity uniformly bounded by $K$.

  The map $f$ is quasiconformal if it is $K$-quasiconformal   for some finite $K$.

  This leads us to a notion of quasiconformally equivalent Riemann surfaces and \emph{quasiconformal structure},\index{quasiconformal structure} a weakening of the notion of conformal structure: A quasiconformal structure on a 2-dimensional manifold is a family of conformal structures such that the identity map between any two of them is quasiconformal. We say that two  conformal structures in such a family are quasiconformally equivalent.

 Teichm\"uller wrote a paper dedicated to applications of quasiconformal mappings the type problem, see \cite{T9}, titled \emph{An application of quasiconformal mappings to the type problem}. In the introduction to this paper, he writes:
 ``Recently, the problem of determining the properties of the schlicht mapping of
[a simply connected Riemann surface] $\mathfrak{M}$ from its line complex has been frequently studied. So far in the foreground
has been the type problem: How can one determine, from the given line complex,
if the corresponding surface $\mathfrak{M}$  can be mapped one-to-one and conformally onto
the whole plane, the punctured plane, or the unit disk? One is still very far from
finding sufficient and necessary conditions."

In this work on the type problem,\index{type problem}\index{problem!type}  Teichm\"uller started by showing that two branched coverings of the sphere whose line complexes are equal are quasiconformally equivalent. He gave a criterion for hyperbolicity\index{type!hyperbolic}  and he proved that
 the type of a simply connected open Riemann surface is a quasiconformal invariant \cite{T9}. The latter result is a direct consequence of the fact that the complex plane and the unit disc cannot be quasiconformally mapped onto each other, a result which he proves in the same paper. In principle, this result reduces the type problem\index{type problem!Teichm\"uller}\index{problem!type} to a simpler problem, since quasiconformal mappings\index{quasiconformal mapping}\index{mapping!quasiconformal} exist much more abundantly than conformal mappings. To show that a surface $S$ is parabolic (respectively hyperbolic), it suffices to exhibit a quasiconformal homeomorphism between $S$ and the complex plane (respectively the unit disc).

To end this subsection on Teichm\"uller's work on the type problem, let us mention the survey \cite{ABP} titled \emph{Teichm\"uller’s work on the type problem}.

                       \subsection{Lavrentieff on the type problem}\label{s:almost}

Lavrentieff's\index{type problem!Lavrentieff}\index{problem!type} work on the type problem is based on  the notion of ``almost analytic function"\index{almost analytic function}\index{function!almost analytic} which he introduced, another  weakening of the notion of conformal mapping which is close to (but slightly different from) the notion of quasiconformal mapping.\index{quasiconformal mapping}\index{mapping!quasiconformal}

To state this property, let us first recall that there exist several equivalent conditions for a differentiable function $f$ from the complex plane to itself (or, more generally, between surfaces) to be holomorphic, and weakening each of them gives a weakening of a function to be holomorphic. Among these conditions, we mention the following:
\begin{enumerate}  
\item   $f$ is angle-preserving.
 \item the real and imaginary parts of $f$ satisfy the Cauchy--Riemann equations.
 \item $f$ satisfies $\frac{\partial f}{\partial \overline{z}}=0$.
 \item The Taylor series expansion of $f$ around every point is convergent in some open disc around this point;

\item The following notion of holomorphicity is stated in  terms of almost complex structures:\index{almost complex structure}
 Given a surface $S$ equipped with a $J$-holomorphic function,\index{J@$J$-holomorphic} a differentiable function $f:(S,J)\to \mathbb{C}$ is said to be \emph{$J$-holomorphic}\index{J@$J$-holomorphic} if  for every point $p$ on $S$ and for every tangent  vector $u$ at $p$, we have 
\[(df)_p(J_pu)=i(df)_p(u).\]

\item \label{prop:circles} The function $f$  possesses a complex derivative, that is, infinitesimally, it acts by multiplication by a complex number. 
 
\end{enumerate}

   Lavrentieff's  almost analyticity\index{almost analytic function}\index{function!almost analytic} property is a weakening of Property \ref{prop:circles}, a property equivalent to the fact that at each point the Jacobian matrix of $f$ acts on the tangent space as a rotation followed by a homothety. This is also expressed by the fact that the map sends infinitesimal circles to infinitesimal circles. (Thus, this property is close to the notion of quasiconformal mapping which we recalled in \S \ref{s:Nevanlinna}.)

 In fact, in the most general case, the Jacobian matrix of a differentiable map between tangent spaces, since it acts linearly, takes circles centered at the origin to ellipses centered at the origin, and Lavrentieff's property is formulated in terms of  the boundedness of the eccentricty and the directions of these ellipses. We now state this condition in precise terms.
 
\begin{definition} {\bf (Lavrentieff \cite{Lavrentieff1935})}\label{def:AA} A function $f:\Omega\to \mathbb{C}$ defined on a domain $\Omega\subset \mathbb{C}$ is said to be \emph{almost analytic}\index{almost analytic function}\index{function!almost analytic} if the following three properties hold:

 \begin{enumerate}
\item $f$ is continuous.

\item $f$ is orientation-preserving and a local homeomorphism on the complement of a countable closed subset of  $\Omega$.

\item There exist  two real-valued functions $p:\Omega\to [1,\infty[$ and $\theta: \Omega\to[0,2\pi]$, called the \emph{characteristics of $f$}, defined as follows:
\begin{itemize}
\item There is subset $E$ of $\Omega$ which consists of finitely many analytic arcs such that $p$ is continuous on $\Omega\setminus E$ and $\theta$ is continuous at each point $z$ satisfying $p(z)\not=1$. ($E$ might be empty.)

\item On every domain $\Delta\subset \Omega\setminus E$ whose frontier is a simple analytic curve, $p$ is uniformly continuous. Furthermore, if such a domain $\Delta$ and its frontier do not contain any point satisfying $p(z)=1$, then $\theta$ is also uniformly continuous on $\Delta$.

\item For an arbitrary point $z_0$ in $\Omega\setminus E$, let $\mathcal{E}$ be an ellipse  centered at $z_0$, let $\theta(z)$ be the  angle between its major axis and the real axis of the complex plane, and let $p(z_0)=\frac{a}{b}\geq 1$ be the ratio of the major axis $a$ to its minor axis $b$. Let $z_1$ and $z_2$ be two points on the ellipse $\mathcal{E}$ at which the expression $\vert f(z)-f(z_0)\vert$ attains its maximum and minimum respectively. Then,
\[\lim_{a\to 0}\left| \frac{f(z_1)-f(z_0)}{f(z_2)-f(z_0)}\right| =1.
 \]
\end{itemize}
\end{enumerate}
\end{definition}

 One goal of  Lavrentieff's article \cite{Lavrentieff1935} is to show that almost analytic\index{almost analytic function}\index{function!almost analytic} functions share several properties of analytic functions. This includes a compactness result for families of  almost analytic functions with uniformly bounded characteristics, which is a generalization of a known compactness property that holds for families of analytic functions. It also includes a generalization of Picard's big Theorem \cite[\S 3]{Lavrentieff1935}. The generalization of  the same theorem for quasiconformal mappings was already obtained by Gr\"otzsch, see \cite{Groetzsch1928a}.
 
 The question of finding an almost analytic\index{almost analytic function}\index{function!almost analytic} function from its characteristics, which Lavrentieff solves in the same paper and which we state now, is a form of what was later called the ``Measurable Riemann Mapping Theorem". This  is a wide generalization of the Riemann Mapping Theorem:

\begin{theorem}[Lavrentieff \cite{Lavrentieff1935}] \label{th:MRMT}
Given any two functions $p(z)\geq 1$ and $\theta(z)$ defined on the closed unit disc $\vert z\vert \leq 1$ such that $p$     and $\theta$ satisfy the properties in the above definition and such that $p(z)<M$ for some constant $M$, there exists an almost analytic function\index{almost analytic function}\index{function!almost analytic} $f$ realizing a self-homeomorphism of the closed disc $\vert z\vert \leq 1$ and having the characteristics $p$ and $\theta$.
\end{theorem}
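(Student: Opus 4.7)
My plan is to translate the geometric data $(p,\theta)$ into an infinitesimal field of ellipses and then construct $f$ as a limit of explicit piecewise-affine approximations whose characteristics converge to $(p,\theta)$. At each $z\in\mathbb{D}$ where $p$ and $\theta$ are continuous, the pair $(p(z),\theta(z))$ prescribes an ellipse $\mathcal E_z$ in $T_z\mathbb{D}$ of eccentricity $p(z)$ whose major axis makes angle $\theta(z)$ with the real axis. The hypothesis $p(z)<M$ means these ellipses have uniformly bounded eccentricity, which is the analogue at this level of generality of the bound $\|\mu\|_\infty\le (M-1)/(M+1)<1$ on a Beltrami coefficient. The exceptional set $E$, where $\theta$ is allowed to be discontinuous but $p$ remains well-behaved, will be negligible for our approximation argument because it is a finite union of analytic arcs.

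First, I approximate $(p,\theta)$ by step data. Triangulate $\overline{\mathbb{D}}$ by a fine mesh $\mathcal T_n$ of small curvilinear triangles chosen so that $E$ is contained in the $1$-skeleton of $\mathcal T_n$. On each triangle $T\in\mathcal T_n$ pick a sample point $z_T\in T\setminus E$ and declare $(p_n,\theta_n)\equiv(p(z_T),\theta(z_T))$ on the interior of $T$. On each triangle I then write down the unique $\mathbb R$-affine orientation-preserving map $A_T$ that sends the unit circle to an ellipse of ratio $p(z_T)$ and axis direction $\theta(z_T)$; since adjacent affine maps do not in general agree on shared edges, I correct them on a thin collar neighborhood of the $1$-skeleton by piecewise-affine interpolation. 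This produces a homeomorphism $f_n\colon\overline{\mathbb{D}}\to\overline{\mathbb{D}}$, normalised (e.g.\ by fixing $0$ and a boundary point and a direction), which is almost analytic in the sense of Definition \ref{def:AA} and whose characteristics $(p_{f_n},\theta_{f_n})$ agree with $(p_n,\theta_n)$ off the collars, themselves of measure tending to $0$. The bound $p_{f_n}\le M$ holds uniformly in $n$.

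Next I pass to the limit. Because the family $\{f_n\}$ consists of almost analytic self-homeomorphisms of $\overline{\mathbb{D}}$ with uniformly bounded characteristics and a common normalisation, the compactness theorem for such families, already established earlier in the same paper \cite{Lavrentieff1935}, produces a subsequence converging uniformly on $\overline{\mathbb{D}}$ to a homeomorphism $f$ which is itself almost analytic with $p_f\le M$. It remains to identify the characteristics of $f$ with the prescribed $(p,\theta)$. On the complement of $E$, $p$ is continuous and $\theta$ is continuous wherever $p\ne 1$; by construction $(p_n,\theta_n)\to(p,\theta)$ locally uniformly on the open set where $p$ is continuous, so the infinitesimal ellipse field of $f_n$ converges to the prescribed ellipse field on this set, and the conclusion about the ratio $|f(z_1)-f(z_0)|/|f(z_2)-f(z_0)|\to 1$ in Definition \ref{def:AA} can be read off from the corresponding statement for $f_n$ after a careful extraction of limits.

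I expect the hard step to be the last one: showing that the limit map genuinely has the pointwise characteristics $(p(z),\theta(z))$ at each $z\notin E$, rather than merely having these as characteristics almost everywhere. The difficulty is that the infinitesimal ellipse at a point is a property of the derivative of $f$ at $z$, and convergence of $f_n\to f$ is only uniform. Handling this requires controlling the derivatives uniformly on compact subsets where $p$ is continuous and $p\ne 1$, which in turn requires that the corrections on the collars do not blow up the dilatation; the uniform bound $p_n\le M$ plus the H\"older-type regularity enjoyed by almost analytic maps with bounded characteristics (another ingredient Lavrentieff establishes earlier) is what ultimately tames this. Points where $p(z)=1$ are benign, since the ellipse degenerates to a circle and only the ratio $p$, not the angle $\theta$, needs to be matched there.
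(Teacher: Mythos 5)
First, a point of reference: the paper does not prove Theorem \ref{th:MRMT} at all --- it states the result and refers to Lavrentieff's original article \cite{Lavrentieff1935} --- so your proposal must stand on its own, and while its overall scheme (step-function approximation of the characteristics plus the compactness theorem for families with uniformly bounded characteristics) is a reasonable reading of the classical strategy, the construction of the approximants $f_n$ fails as written. The data $(p_n,\theta_n)$ prescribes only the conformal distortion of the differential, i.e.\ the infinitesimal ellipse field, not the differential itself: your triangle-by-triangle affine maps $A_T$ (which, incidentally, are not unique --- they are free up to post-composition by a similarity and a translation) have completely unconstrained similarity and translation parts, and nothing in the construction makes the maps on adjacent triangles close to each other along shared edges. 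Your ``piecewise-affine interpolation on a thin collar'' must therefore bridge a discrepancy of definite size $\delta$ across a collar of width $\varepsilon$, producing transverse derivatives of order $\delta/\varepsilon$; as the mesh is refined this destroys precisely the uniform bound $p_{f_n}\le M$ on which the whole compactness argument rests, and it can also destroy injectivity (folds). In effect, solving the problem for piecewise-constant characteristics is already the theorem itself for step data; classically one obtains it not by interpolation but by conformal gluing --- straighten one piece by an affine map, remove the residual distortion of the others by composition, invoking the Riemann Mapping Theorem and the composition rule for characteristics --- or by singular-integral methods. (The further issue that a piecewise-affine map does not send $\overline{\mathbb{D}}$ onto $\overline{\mathbb{D}}$ is repairable: post-compose with a Riemann map of the image, which leaves the characteristics unchanged and extends to the boundary by Carath\'eodory's theorem.)

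The second gap is the one you flag yourself, and flagging it does not close it: Definition \ref{def:AA} is a pointwise requirement at \emph{every} $z_0\in\Omega\setminus E$, expressed through the ellipse-ratio limit, i.e.\ through the infinitesimal behavior of $f$ at each individual point. Uniform convergence $f_n\to f$ together with uniformly bounded characteristics yields, via the compactness theorem, only that $f$ is almost analytic with \emph{some} characteristics bounded by $M$; it does not transfer the pointwise ellipse data, because the distortion is severely discontinuous under locally uniform convergence --- already for quasiconformal mappings, the Beltrami coefficients of a uniformly convergent sequence need not converge to the coefficient of the limit in any pointwise sense. The H\"older-type regularity you invoke bounds the modulus of continuity of the maps, not their derivatives; what the argument actually needs is an interior first-derivative estimate (local $C^1$-equicontinuity of $\{f_n\}$ at points where $p$ is continuous and, for the angle, where $p\ne 1$), and neither your construction nor the ingredients you cite from \cite{Lavrentieff1935} supply it. So both the production of homeomorphisms with step characteristics and the identification of the limit's pointwise characteristics remain open, and these are exactly the substantive content of Lavrentieff's theorem.
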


 In the same paper, Lavrentieff proves the following result on the type\index{type problem!Lavrentieff}\index{problem!type} problem. Like Ahlfors' theorem (Theorem \ref{th:Ahlfors}) and like the theorem of Milnor that we shall quote below (Theorem \ref{t:Milnor}), Lavrentieff's theorem reduces the question of proving parabolicity to that of  proving that a certain integral diverges.

\begin{theorem}[Lavrentieff \cite{Lavrentieff1935}]Let $S$ be a surface in the 3-dimensional Euclidean space which is the graph of a differentiable function $$
t = f(x,y), \quad x^2 + y^2 <\infty, 
$$
whose partial derivatives $\frac{\partial f}{\partial x}$ and $\frac{\partial f}{\partial y}$ are continuous. For each $r\geq 0$, let 
\[M(r)=\max_{x^2+y^2=r^2}1+\left| \mathrm{grad} \,f(x,y) \right|.\] If $\int_{1}^{\infty}\frac{dr}{rM(r)}=\infty$,  then $S$ is of parabolic type.

\end{theorem}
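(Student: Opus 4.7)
The plan is to transfer the question to the plane via the projection $\pi\colon S\to\mathbb{C}$, $\pi(x,y,f(x,y))=x+iy$, and to show parabolicity by proving that a neighbourhood of the ideal boundary has infinite conformal modulus. The map $\pi$ is a diffeomorphism, so $S$ is conformally the Riemann surface $(\mathbb{C},[g])$ where $g$ is the pull-back Riemannian metric, given in coordinates by $g_{ij}=\delta_{ij}+f_if_j$. Two pointwise inequalities drive everything: the $g$-norm dominates the Euclidean norm (so $g$-lengths exceed Euclidean lengths), and $\sqrt{\det g}=\sqrt{1+|\nabla f|^2}\le M(r)$ at Euclidean radius $r$.

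The heart of the argument is the estimate
\[
\mathrm{mod}\!\bigl(\{R_1\le|z|\le R_2\}\bigr)\;\ge\;\frac{1}{2\pi}\int_{R_1}^{R_2}\frac{dr}{r\,M(r)}
\]
in the conformal class $[g]$. I would prove this via the extremal-length formula $\mathrm{mod}=\sup_\sigma L(\sigma)^2/A(\sigma)$, where $\sigma$ ranges over conformal factors relative to $g$, $L(\sigma)$ is the infimum $\sigma$-length of curves joining the two boundary circles, and $A(\sigma)=\int \sigma^2\,dA_g$. The radially symmetric test density $\sigma(z)=1/(|z|\,M(|z|))$ does the job. For any crossing curve $\gamma$,
\[
\int_\gamma \sigma\,ds_g\;\ge\;\int_\gamma \sigma\,ds_{\mathrm{Eucl}}\;\ge\;\int_{R_1}^{R_2}\sigma(r)\,dr\;=\;\int_{R_1}^{R_2}\frac{dr}{r\,M(r)},
\]
where the first inequality is the dominance of $g$ over the Euclidean metric and the second uses $ds_{\mathrm{Eucl}}\ge|dr|$ together with the fact that $r$ sweeps $[R_1,R_2]$ along $\gamma$. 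On the other hand,
\[
A(\sigma)=\int\sigma^2\sqrt{\det g}\,dx\,dy\;\le\;\int_0^{2\pi}\!\int_{R_1}^{R_2}\frac{M(r)}{r^2 M(r)^2}\,r\,dr\,d\theta\;=\;2\pi\int_{R_1}^{R_2}\frac{dr}{r\,M(r)}.
\]
Dividing $L^2$ by $A$ yields the claimed lower bound.

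Letting $R_2\to\infty$ with $R_1=1$ fixed, the hypothesis forces $\mathrm{mod}\!\bigl(\pi^{-1}(\{|z|\ge 1\})\bigr)=\infty$ in the conformal structure of $S$. Since a simply connected non-compact Riemann surface is of hyperbolic type precisely when every doubly connected neighbourhood of its ideal boundary has finite modulus (the Ahlfors--Gr\"otzsch criterion), $S$ must be of parabolic type.

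The main technical obstacle is keeping careful track of the two simultaneous roles of $g$ in the extremal-length computation (as the pull-back Riemannian metric on $\mathbb{C}$ and as the reference against which the conformal factor $\sigma$ is measured), and the clean justification of $\int_\gamma\sigma(r)|dr|\ge\int_{R_1}^{R_2}\sigma(r)\,dr$ for crossing curves whose radial component is not monotonic (which follows by decomposing $\gamma$ into monotonic pieces, or via a layer-cake argument on the level sets of $r$). A secondary point is invoking the precise version of the parabolicity criterion; any treatment based on the extremal length of curves tending to the ideal boundary---which is infinite exactly for parabolic surfaces---will serve.
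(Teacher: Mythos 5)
Your proof is sound, but it takes a genuinely different route from the one the paper records. The paper in fact contains no self-contained proof of this theorem: it only notes that Lavrentieff reduced the type question to Theorem \ref{th:MRMT}, i.e.\ to constructing an almost analytic homeomorphism with prescribed characteristics $p(z)=1/\cos\alpha(z)=\sqrt{1+|\mathrm{grad}\,f|^2}$ and $\theta(z)$ the direction of $\mathrm{grad}\,f$, the type then being extracted through his theory of almost analytic mappings (compactness of families with bounded characteristics and invariance of type, in the spirit of \S \ref{Teich}). You instead give a direct length--area estimate: you pull back the induced metric $g=\mathrm{Id}+\nabla f\,(\nabla f)^{T}$, whose eigenvalues $1$ and $1+|\nabla f|^2$ yield exactly your two pointwise inequalities ($g$-length dominates Euclidean length, and $\sqrt{\det g}=\sqrt{1+|\nabla f|^2}\le 1+|\nabla f|\le M(r)$), and you test extremal length with $\sigma(z)=1/(|z|\,M(|z|))$ to obtain $\mathrm{mod}(\{R_1\le|z|\le R_2\})\ge\frac{1}{2\pi}\int_{R_1}^{R_2}\frac{dr}{r\,M(r)}$, so divergence of the integral makes the modulus of the end infinite and the surface parabolic. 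This is the same mechanism that underlies Ahlfors' criterion (Theorem \ref{th:Ahlfors}) and pairs naturally with Milnor's computation (Theorem \ref{t:Milnor}); it is more elementary and self-contained than Lavrentieff's reduction, whereas Lavrentieff's route buys general machinery (Theorem \ref{th:MRMT} itself, a form of the measurable Riemann mapping theorem, plus compactness and Picard-type theorems for almost analytic functions) rather than an ad hoc test density. One dependency you should acknowledge: the assertion that $S$ ``is conformally the Riemann surface $(\mathbb{C},[g])$'' presupposes the existence of isothermal coordinates for the merely continuous metric $g$ (here $f$ is only $C^1$), and that existence statement is precisely the local, bounded-dilatation case of the circle of ideas in Theorem \ref{th:MRMT}; so your argument does not entirely escape Lavrentieff's tool, it just confines it to a local normalization, after which the global type is decided by your conformally invariant modulus estimate. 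The two technical points you flag are indeed the right ones and close in the standard way: the crossing inequality $\int_\gamma\sigma(r)\,|dr|\ge\int_{R_1}^{R_2}\sigma(r)\,dr$ (with $r=|z|$) via the Banach indicatrix or decomposition into radially monotone pieces, and the passage $R_2\to\infty$ by monotonicity of the extremal distance from $\{|z|=R_1\}$ to the ideal boundary, since every curve reaching the ideal boundary overflows each compact annulus.
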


 Lavrentieff, in his paper, reduced the type\index{type problem!Lavrentieff}\index{problem!type} problem to Theorem \ref{th:MRMT}, that is, to the problem of finding an  almost analytic function\index{almost analytic function}\index{function!almost analytic} having  given characteristics $p$ and $\theta$, such that for $z=(x,y)$, 
 $$
p(z) = \frac{1}{\cos\alpha(z)},
$$
where 
 $\alpha(z)$ is the angle formed by the $(x,y)$-plane and the tangent plane of $S$ at $(x,y,t)$ and where $\theta(z)$ is the angle formed by $\mathrm{grad} \,f(x,y)$ and the $x$-axis.  
\subsection{Milnor on the type problem}

Milnor,\index{type problem!Milnor}\index{problem!type} in his paper \cite{Milnor1997}, studied the type problem for the case of a surface $S$ embedded in Euclidean 3-space which is complete and whose Gaussian curvature depends only on the distance to some fixed point $p$. The Riemannian metric induced on this surface can be written in the polar coordinates $(r,\theta)$ as $dr^2+g(r)^2d\theta^2$ where $2\pi g(r)$ is a smooth positive function that denotes the length of the geodesic circle of radius $r$ centered at $p$. The Gaussian curvature is then given by $K(r)=-(d^2g/dr^2)/g$. 

Milnor proves the following:
 
 \begin{theorem}\label{t:Milnor}
 For a surface $S$ as above we have:
\begin{enumerate}
\item \label{M1}  $S$ is parabolic if and only if for some (or equivalently for any) $a>0$, the integral $\int_a^\infty dr/g(r)=\infty$.

\item  \label{M2} If for an arbitrary $\epsilon >0$ we have $K(r)\geq -(r^2\log r)$ for $r$ large enough, then $S$ is parabolic.
 
\item  \label{M3} For any $\epsilon >0$, we have $K\leq -(1+\epsilon)/r^2\log r)$ for large enough $r$,  and if the function $g(r)$ is unbounded, then $S$ is hyperbolic.
 \end{enumerate}
 \end{theorem}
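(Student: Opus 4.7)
The plan is to reduce part (1) to the Uniformization Theorem \ref{t:uniformization} via explicit conformal coordinates adapted to the rotational symmetry, and then to extract parts (2) and (3) by comparing $g$ with the solutions of a model linear ODE whose divergence/convergence behaviour for $\int dr/g$ can be read off.

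First I would introduce the conformal radial coordinate. Fixing $r_0>0$ and setting $\rho(r)=\exp\bigl(\int_{r_0}^{r} ds/g(s)\bigr)$, so that $\rho'/\rho = 1/g$, the map $z = \rho(r) e^{i\theta}$ from $S\setminus\{p\}$ into $\mathbb{C}$ satisfies
\[
|dz|^2 \;=\; \frac{\rho(r)^2}{g(r)^2}\bigl(dr^2 + g(r)^2 d\theta^2\bigr),
\]
so it is conformal. Its image is the punctured disk $\{0<|z|<e^T\}$ where $T=\int_{r_0}^\infty ds/g(s)\in(0,\infty]$, and since $g(r)\sim r$ near $r=0$ this chart extends conformally across $p$ by the Riemann removable singularity theorem. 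As $S$ is simply connected, Theorem \ref{t:uniformization} then identifies $S$ with the open disk $\{|z|<e^T\}$; with the convention $e^\infty=\infty$, $S$ is parabolic iff $T=\infty$, which is precisely condition (1).

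For parts (2) and (3), I would use the Gauss formula $K(r)=-g''(r)/g(r)$ to rewrite the hypothesis on $K$ as the differential inequality $g''(r)\leq g(r)/(r^2\log r)$ in case (2) and $g''(r)\geq (1+\epsilon)g(r)/(r^2\log r)$ in case (3), valid for large $r$. I would then run a Sturm-type comparison against the explicit model equation $h'' = c h/(r^2\log r)$: with $f=c/(r^2\log r)$, the Wronskian identity $(g'h-gh')' = (g''-fg)h$ shows that $g'h-gh'$ is monotone, hence $(g/h)' = O(1/h^2)$, and integrability of $1/h^2$ transfers the asymptotic rate of $h$ to $g$. A WKB-style analysis of the model equation produces two linearly independent solutions, the dominant one behaving like $r\log r$ when $c=1$ and like $r(\log r)^{1+\delta(\epsilon)}$ for some $\delta>0$ when $c=1+\epsilon$. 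Thus $g(r)=O(r\log r)$ in case (2), forcing $\int^{\infty} dr/g(r)=\infty$ and parabolicity via part (1); and $g(r)\geq c\,r(\log r)^{1+\delta}$ in case (3) provided $g$ lies on the dominant branch, whence $\int^{\infty} dr/g(r)<\infty$ and hyperbolicity via part (1).

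The main obstacle will be the asymptotic analysis of the model ODE $h''=c h/(r^2\log r)$ together with making the Sturm comparison quantitative enough to transfer not merely the ordering but the sharp asymptotic growth rate from $h$ to $g$. A related subtlety in case (3) is that the inequality $g''\geq fg$ does not by itself force $g$ onto the dominant solution branch; this is precisely where the hypothesis that $g$ is unbounded enters, ruling out $g$ being asymptotic to the (necessarily bounded) subdominant solution of the model equation.
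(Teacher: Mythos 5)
Your proposal is correct and takes essentially the paper's route: your coordinate $z=\rho(r)e^{i\theta}$ with $\rho(r)=\exp\bigl(\int_{r_0}^{r} ds/g(s)\bigr)$ is exactly the paper's isothermal coordinate $\rho=\int_a^r ds/g(s)$ followed by the exponential map $(\rho,\theta)\mapsto e^{\rho+i\theta}$, with the same removable-singularity step at $p$ (note that this chart already identifies $S$ conformally with the disc of radius $e^{T}$, so the appeal to Theorem \ref{t:uniformization} is superfluous). For items (2) and (3) the paper only asserts that they follow from item (1), and your Sturm--Wronskian comparison is the standard way (Milnor's) to supply the missing details --- indeed $h=r\log r$ solves $h''=h/(r^2\log r)$ exactly, so no WKB analysis is needed in case (2), and in case (3) the subtlety you flag is resolved just as you indicate: unboundedness together with convexity ($g''=-Kg>0$) forces $g'>0$ eventually, which makes the Wronskian of $g$ against the bounded (in fact decaying) principal solution eventually positive and pumps $g$ up to the dominant growth rate $r(\log r)^{1+\epsilon}$, whence $\int^\infty dr/g<\infty$.
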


Items \ref{M2} and \ref{M3} are not surprising if we consider them again from the point of view that enough negative curvature corresponds to a hyperbolic type and enough positive curvature corresponds to a parabolic type. An interesting fact here is that the difference between the two cases is made by an $\epsilon$ which can be chosen to be arbitrarily small, and in this sense the result is the best possible in the given situation.

 \begin{proof}[Milnor's proof] The proofs of Items \ref{M2} and \ref{M3} follow from Item \ref{M1} which, like the results of Ahlfors and Lavrentieff that  we mentioned above, characterizes parabolicity in terms of the divergence of a certain integral.  

 For the proof of \ref{M1}, Milnor introduces a new coordinate $\rho=\int_a^r ds/g(s)$. In the coordinates $(\rho,\theta)$, the metric becomes
 \[g^2(d\rho^2+d\theta^2),\] 
and is thus conformally equivalent to the Euclidean metric. (Coordinates in which the metric is conformal to the Euclidean metric are called isothermal, another term which comes from the theory of heat transfer.) The exponential map $(\rho,\theta)\mapsto e^{\rho +i\theta}$ maps $S$ conformally onto $\mathbb{C}$. (The mapping is a priori defined in the complement of the point $p$, but it extends to $p$ because it is bounded, differentiable and conformal in a neighborhood of $p$, therefore this singularity is removable.) 

\end{proof}\qed

Milnor, at the end of his paper, adresses the question of finding an effective criterion for deciding, for a simply connected open surface embedded in 3-space, whether it is parabolic or hyperbolic.

\subsection{Probablistic approaches} 
There are other approaches to the type problem\index{type problem}\index{problem!type} that we shall not touch upon here. In particular, there is a probabilistic point of view. A classical result in this direction says that a surface equipped with a Riemannian metric is parabolic if and only if the associated Brownian\index{Brownian motion}\index{type problem!Brownian motion}\index{problem!type} motion is recurrent. There is a large literature on this question, see e.g., Kakutani's paper \cite{Kakutani}, which appeared the same year as Teichm\"uller's paper \cite{T9} on the type problem. Kakutani gives the following criterion: 
  Let $S$ be a simply-connected open Riemann surface which is an
infinite cover of the sphere and let 
$D$ be a simply connected subset  
of $S$ bounded by a Jordan curve $\Gamma$. For a point $\zeta$ in $S \setminus D$, let $u(\zeta)$ denote the probability that the Brownian motion on $S$ starting at 
$\zeta$ enters into $\Gamma$ without getting out of $S\setminus D$ before. Then, one of the
following two cases holds:
\begin{enumerate}
\item  $u(\zeta) < 1$ everywhere in $S\setminus D$ and in this case $S$  is hyperbolic;

\item  $u(\zeta)$ is identically equal to 1   $S\setminus D$ , and in this case $S$ is
parabolic.
\end{enumerate}

Kakutani wrote  later several papers related to the type problem, see, e.g.,
\cite{Kakutani1937, Kakutani1944, Kakutani1945}.

Let us also mention Z. Kobayashi's papers on Riemann surfaces in which he discusses the type problem; see in particular his paper \cite{Kobayashi1} in which he gives sufficient conditions under which a surface is of parabolic type, and his paper \cite{Kobayashi2} in which he presents a theory based on Kakutani's paper \cite{Kakutani} and Teichm\"uller's paper \cite{T13}, generalizing both works. 
Figure \ref{fig:Kobayashi} is extracted from Kobayashi's paper \cite{Kobayashi1}. It is reproduced here for the purpose of showing the nobility of this kind of drawing compared to computer-drawing everybody uses today in mathematical papers.

\subsection{Electricity}

P. Doyle and J. L. Snell expanded on the relation between the type problem, Brownian motion and the propagation of an electric current on the surface, see the paper \cite{DS} where the main underlying idea is that the unit disc has finite electrical resistance while the plane has infinite resistance. 
See also Doyle's review in the Bulletin of the AMS, \cite{Doyle}, in which the author reports on the type problem for a covering  of the Riemann sphere
with $n$ punctures using the properties of the Speiser graph of the covering, surveying the important  work of Lyons--McKean--Sullivan \cite{LMS} and of McKean--Sullivan \cite{MS} on this topic. Incidentally, the author seems not to be aware of the work of Teichm\"uller on these questions.

Introducing electricity in the type problem is completely in the tradition of Riemann who, at several places in his work on Riemann surfaces uses arguments from electricity. This occurs in particular in  the two main papers where he deals with Riemann surfaces, namely, his doctoral dissertation \cite{Riemann1851} and his paper on Abelian functions \cite{Riemann1862}.  For instance, in the Dirichlet principle\index{Dirichlet principle} that he used in his proof of the Riemann Mapping Theorem\index{Riemann Mapping Theorem} (see \S \ref{s:unif-simply} above), Riemann considered  the function $f$ defined on the boundary of the domain $\Omega$ as a time-dependent electrical potential, and his conclusion in the solution to the Dirichlet problem\index{Dirichlet problem} consisted in letting the system evolve and reach an equilibrium state. The function obtained necessarily satisfies a mean value property and therefore is harmonic.\index{harmonic function} 

\medskip

\noindent {\bf Historical note} In the summer of 1858, Riemann gave a course titled \emph{The mathematical theory of gravitation, electricity and magnetism} \cite{Sellien}. Besides his mathematical papers in which he used electricity, Riemann also wrote papers on electricity. See his unfinished paper \emph{Gleichgewicht der Electricität auf Cylindern
mit kreisförmigen Querschnitt und parallelen Axen} (On the equilibrium of electricity
on cylinders with circular transverse section and whose axes are parallel) \cite{Riemann1857} (1857)  published posthumously in the second edition of his \emph{Collected
works} which concerns the distribution of
electricity or temperature on infinite cylindrical conductors with parallel generatrices.
The reader interested  in more details may refer to the survey titled  \emph{Physics in Riemann's mathematical papers} \cite{P2017}. 

\medskip

   \begin{figure}
\begin{center}
\includegraphics[width=13 cm]{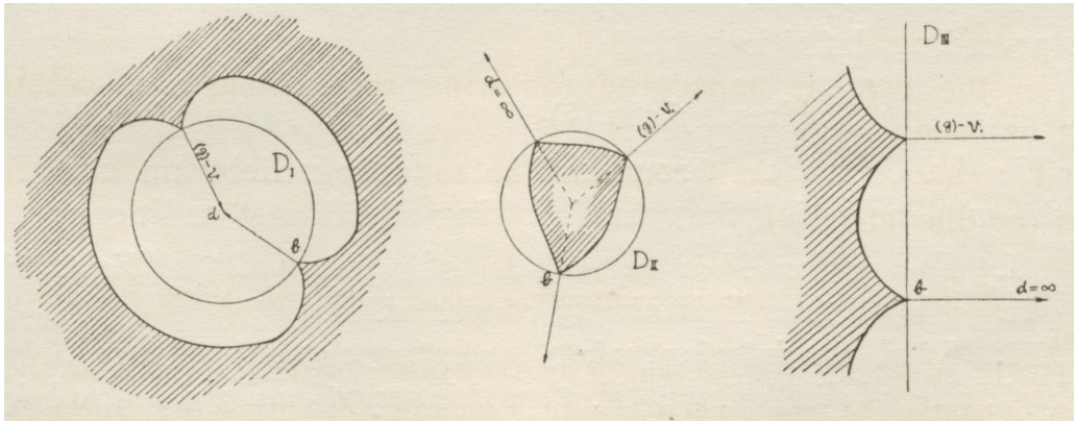} 
\caption{From Kobayashi's paper \cite{Kobayashi1} on the type problem}
\label{fig:Kobayashi}
\end{center}
\end{figure}

\begin{remark} ({\bf Higher-dimensional generalizations}) One advantage of introducing random walks and probabilistic methods in the theory of parabolicity/hyperbolicity of Riemann surfaces is that it allows this theory to be generalized to graphs, discrete groups and higher-dimensional manifolds. There are several works in this direction. I mention Marc Troyanov's  paper \cite{Troyanov-Type} in which he discusses parabolicity for $n$-dimensional manifolds. More precisely, in this paper, Troyanov surveys an invariant of Riemannian manifolds which is related to the non-linear potential theory of the $p$-Laplacian  and which determines a property which generalizes parabolicity or hyperbolicity of surfaces. The key notion that this leads to is that of $p$-parabolicity and $p$-hyperbolicity. 
In the generalized setting of the paper \cite{Troyanov-Type}, 
the classical dichotomy parabolic/hyperbolic becomes $2$-parabolic/$2$-hyperbolic, i.e., $p=2$.
The author focuses in particular on the relationship between the asymptotic geometry of a manifold and its parabolicity.

\end{remark}

\section{Uniformization: Geometry and combinatorics}\label{s:combinatorics}

\subsection{Dessins d'enfants}\label{s:dessins}

A {\it map}\index{dessin d'enfants} on a compact oriented surface $S$ is a bicolored finite graph $\Gamma$ embedded in $S$ such that each connected component of $S\setminus \Gamma$ is contractible. The vertices of the graph are colored by $0,1$ and each edge joins vertices of different colors. A connected component of $S\setminus \Gamma$ is called a \emph{cell} of the map.  A map $(S,\Gamma)$ can be upgraded to a triangulation $(S,\Delta_\Gamma)$ by choosing a point in each cell and connecting the chosen point by a star-shaped system of curves to the vertices of $\Gamma$ that are on the boundary of the cell. The triangulation $\Delta_\Gamma$ is special in that it admits a checkerboard\index{checkerboard coloring} black-white coloring (that is, triangles meeting from opposite sides along an edge have opposite colors). To see this, we use the symbols $0,1,\infty$ to color the vertices of the triangulation $\Delta_\Gamma$ in such a way that the chosen points in the cell gets the color $\infty$. Each triangle has vertices of different colors, hence becomes oriented
by imposing the cyclic order $0,1,\infty$.
Finally we color white those triangles for which the induced orientation from $S$ coincides with the cyclic orientation of the vertices.

The minimal example of a map is $M=(\mathbb{C}\cup \{\infty\},[0,1])$. Here, the graph is $\Gamma=[0,1]$ with two vertices and one edge embedded in the Riemann sphere $\mathbb{C}\cup \{\infty\}$. This map has only one cell. Now choose the point $\infty$ connected by $[1,\infty]$ and $[\infty=-\infty_\mathbb{R},0]$ as an upgrading to a triangulation with two triangles, the white one being the so-called upper half-plane $\mathbb{C}_+$. We denote by $M_\infty$ this minimal map together with the above upgrading to a triangulation.

 Speiser graphs discussed in \S \ref{s:Speiser} are maps in the above sense, except that the surfaces on which they are defined are not necessarily compact.

Let it be given the combinatorial data consisting of a triple $D=(S,\Gamma,\Delta_\Gamma)$ of a map $\Gamma$ with its $0,1$ bicoloring  on a compact surface $S$, together with the upgrading to a checkerboard colored triangulation $\Delta_\Gamma$. 

A smooth mapping $f_\Gamma:(S,\Gamma,\Delta_\Gamma)\to  M_\infty$ is well defined up to isotopy. It sends vertices to vertices respecting colors, edges to edges, and its restriction
to the complement of the vertex set of the triangulation is a submersion.

Let $J_\Gamma$ be the complex structure on $S$ obtained by pulling back the complex structure of $\mathbb{P}^1(\mathbb{C})\setminus \{0,1,\infty\}$ and
 by extending this structure using the removable singularity theorem.
 
The result is again an upgrading
from the combinatorial data 
$\Gamma \subset S$ to a Riemann surface 
$(S,J_\Gamma)$ 
together with
a holomorphic function $f_D:(S,J_\Gamma)\to \mathbb{P}^1(\mathbb{C})$ having its critical values in the set $\{0,1,\infty\}$.

The set of possible combinatorial data up to isotopy is countable and the set of Riemann surfaces up to bi-holomorphic equivalence is uncountable. Thus, a natural question arises, namely, what Riemann surfaces appear as an upgrading of a map.
The answer is given by Belyi's Theorem:
\begin{theorem}[Belyi \cite{Bel1979}] For a compact Riemann surface $S$ the following are equivalent:

\begin{enumerate}
\item $S$ carries a holomorphic map $f:S\to \mathbb{P}^1(\mathbb{C})$ with at most $3$ critical values (which can be taken, without loss of generality, to be $\{0,1,\infty\}$);

\item $S$ is bi-holomorphically equivalent to an upgraded map surface;

\item $S$ is bi-holomorphically equivalent to a complex curve, that is, a Riemann surface embedded in the projective space $P^n=\mathbb{P}^n(\mathbb{C})$ defined by a set of polynomials with coefficients in a number field, that is, a finite field extension of the field of rational numbers.
\end{enumerate}

\end{theorem}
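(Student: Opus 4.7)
The plan is to prove the three implications $(1) \Leftrightarrow (2)$, $(3) \Rightarrow (1)$, and $(1) \Rightarrow (3)$, with the last being the serious direction.

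For $(1) \Leftrightarrow (2)$, the paper has already done most of the work. Given a Belyi map $f\colon S\to\mathbb{P}^1(\mathbb{C})$ ramified only over $\{0,1,\infty\}$, I would set $\Gamma = f^{-1}([0,1])$, color the vertices lying over $0$ and $1$ with the corresponding colors, and take as the points of color $\infty$ (for the upgrade $\Delta_\Gamma$) one preimage of $\infty$ in each face. The restriction $f\colon S\setminus f^{-1}\{0,1,\infty\}\to \mathbb{P}^1(\mathbb{C})\setminus\{0,1,\infty\}$ is an unramified covering, so each cell is a disc; this yields a dessin whose associated holomorphic upgrade recovers $(S,f)$ by construction. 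Conversely, from the combinatorial data $(S,\Gamma,\Delta_\Gamma)$, the paper already produces $f_\Gamma$ as a smooth ramified cover of $M_\infty$ and defines $J_\Gamma$ so that $f_\Gamma$ is $J_\Gamma$-holomorphic, with the isolated non-submersion points at the vertex set being removable.

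For $(3) \Rightarrow (1)$, which is Belyi's elementary trick, I would first pick any non-constant meromorphic function $f_0\colon S\to\mathbb{P}^1(\mathbb{C})$ defined over $\overline{\mathbb{Q}}$ (e.g., a coordinate projection after embedding). Its set of critical values $T_0\subset\overline{\mathbb{Q}}\cup\{\infty\}$ is finite. The first reduction is to make $T_0$ rational: if $m(x)\in\mathbb{Q}[x]$ is the product of the minimal polynomials of the finite elements of $T_0$, then $m\circ f_0$ sends those elements to $0$, while the new critical values coming from roots of $m'$ are algebraic of strictly lower total degree over $\mathbb{Q}$; iterate. Second, with all critical values in $\mathbb{Q}\cup\{\infty\}$, apply a M\"obius transformation to send three of them to $\{0,1,\infty\}$, and for every remaining rational critical value $\lambda=m/(m+n)\in(0,1)$ compose with the Belyi polynomial
\[
B_{m,n}(x)=\frac{(m+n)^{m+n}}{m^m n^n}\,x^m(1-x)^n,
\]
whose only critical values are $0$ and $1$ and which sends $\{0,\lambda,1\}\to\{0,1\}$. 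Each such step decreases the number of finite critical values by one, so finitely many compositions yield a Belyi map.

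The main obstacle is the converse $(1) \Rightarrow (3)$, which requires arithmetic descent. My plan is to use the rigidity of finite \'etale covers of $\mathbb{P}^1_{\mathbb{Q}}\setminus\{0,1,\infty\}$. A Belyi map $f\colon S\to\mathbb{P}^1(\mathbb{C})$ of degree $d$ corresponds (by the Riemann existence theorem) to a conjugacy class of transitive homomorphisms $\pi_1(\mathbb{P}^1(\mathbb{C})\setminus\{0,1,\infty\})\to \mathfrak{S}_d$; this datum is purely combinatorial (equivalent to the dessin), hence there are only finitely many such covers of degree $d$ up to isomorphism. Now $\mathrm{Aut}(\mathbb{C}/\mathbb{Q})$ acts on the pair $(S,f)$ by transport of structure, since the target $\mathbb{P}^1$ together with the marked points $\{0,1,\infty\}$ is defined over $\mathbb{Q}$; the degree $d$ and the combinatorial invariants of the dessin are preserved, so the orbit is finite. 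Thus the stabilizer of $(S,f)$ in $\mathrm{Aut}(\mathbb{C}/\mathbb{Q})$ has finite index, and by Weil's descent criterion $(S,f)$ admits a model over a number field, which means $S$ is bi-holomorphic to a curve defined over $\overline{\mathbb{Q}}$. The delicate technical point I would need to verify carefully is that the Weil cocycle condition is automatically satisfied because of the rigidity (no non-trivial automorphisms of the covering fixing the three branch points can deform $f$), so a finite Galois-invariant descent datum actually descends; this is where one could alternatively invoke the moduli-theoretic description via Hurwitz spaces, which are themselves defined over $\mathbb{Q}$ and have $\overline{\mathbb{Q}}$-points parametrising isomorphism classes of covers with prescribed ramification.
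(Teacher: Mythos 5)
Your overall architecture is sound, and it supplies considerably more than the paper does: the paper gives no proof of the hard implication $(1)\Rightarrow(3)$ at all (the theorem is cited from Belyi's article), it establishes $(2)\Rightarrow(1)$ implicitly through the construction of $J_\Gamma$ and the holomorphic map $f_D$ in the paragraphs preceding the statement, and it devotes only two sentences to $(3)\Rightarrow(1)$ --- project the embedded curve to obtain a meromorphic function defined over a number field, then ``reduce the number of critical values to at most three.'' Your $(1)\Leftrightarrow(2)$ coincides with the paper's construction, and your $(3)\Rightarrow(1)$ is precisely the elementary Belyi trick that the paper's sentence gestures at. That part is correct up to one small omission: after the minimal-polynomial iterations, the remaining rational critical values need not lie in $(0,1)$, so before invoking $B_{m,n}$ you must first move each of them into $(0,1)$ using the $S_3$ of M\"obius transformations preserving $\{0,1,\infty\}$ (generated by $x\mapsto 1-x$ and $x\mapsto 1/x$); this is harmless since these maps are unramified.

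The genuinely shaky point is where you anticipated it, in $(1)\Rightarrow(3)$, and two corrections are needed. First, your parenthetical claim that no non-trivial automorphisms of the covering exist is false in general: a Belyi pair $(S,f)$ can have a large automorphism group (every Galois dessin does), so the Weil cocycle condition is not automatic. The standard repair is to rigidify --- mark an edge of the dessin, equivalently a point of $S$ above a fixed base point, which kills all automorphisms of the pair --- or to invoke K\"ock's refinement that the field of moduli of a suitably marked Belyi pair is a field of definition. Second, Weil's descent criterion applies to finite Galois extensions, whereas $\mathbb{C}/\overline{\mathbb{Q}}$ is not algebraic; passing from ``finite orbit under $\mathrm{Aut}(\mathbb{C}/\mathbb{Q})$'' to ``defined over $\overline{\mathbb{Q}}$'' requires a separate specialization argument (spread the defining equations out over a finitely generated $\mathbb{Q}$-algebra and specialize at a $\overline{\mathbb{Q}}$-point, using that only finitely many isomorphism classes occur), after which Weil descent from $\overline{\mathbb{Q}}$ to a number field handles the rest. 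Your fallback via Hurwitz spaces defined over $\mathbb{Q}$ circumvents both difficulties at once, so the proposal is rescuable exactly along the line you indicated.
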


 The implication $3\Rightarrow1$ is the ``easy part"; indeed, the inclusion $S\subset P^n$ as an algebraic curve defined by a set of polynomials with coefficients in a number field can be composed (with care) with projections to lower-dimensional spaces $P^n\to P^{n-1}$ until one gets a meromorphic function on $S$. Using the fact that this function is itself defined on a number field, there is a way of reducting the number of critical values to at most three.

The strength of Belyi's\index{Belyi theorem} Theorem can already be appreciated in the case of genus $0$ surfaces and maps having only one cell, or, equivalently, in the case where  the graph $\Gamma$ is a planar tree in the Gaussian plane $\mathbb{C}$. The above construction leads in this case of a map $\Gamma\subset \mathbb{C}$ to a polynomial $f_\Gamma:\mathbb{C}\to \mathbb{C}$ having at most two critical values. Such a polynomial $f_{\Gamma,e}$ becomes well defined if one marks an edge $e$ of $\Gamma$ and requires that $f_{\Gamma,e}$ sends its extremities to $0,1$. 

The study of polynomial mappings $P:\mathbb{C} \to \mathbb{C}$ with at most two critical values was introduced by G. Shabat, see  \cite{S-V1990} and \cite{S-Z1994}.  These polynomials generalize Chebyshev polynomials\index{Chebyshev polynomial} which have only two critical values and such that these critical points are all of Morse type. Equivalently, there is a maximal number of such critical points.

This fact leads to an action of the absolute Galois group ${\rm Gal}(\bar{\mathbb{Q}},\mathbb{Q})$ on the set of isotopy classes of bicolored planar trees with marked edges. This action is faithful and a window for observing and perhaps understanding the absolute Galois group. See Geothendieck's \emph{Sketch of a program} \cite{Grothendieck}. We also refer to the review made in \cite{AJP} of the relevant parts of the \emph{Esquisse}.

The inverse image of the interval $[0,1]$ by 
$f_{\Gamma,e}$ may look like a drawing of a person by a child, hence the name ``dessin d'enfants"\index{dessin d'enfants} given by Grothendieck to this mathematical object whose study he promoted in \cite{Grothendieck}. For an introduction to dessins d'enfants we refer to the original article  \cite{Grothendieck} and to the expositions in \cite{G-G2012, Guillot, HS}.
The reader interested in the relation between dessins\index{dessin d'enfants} d'enfants and the deformation theory of Riemann surfaces may refer to \cite{H2007}.

Voevodsky and Shabat in \cite{VS1989} considered surfaces equipped with Euclidean structures with cone singularities obtained by gluing Euclidean equilateral triangles along their sides. They showed that the Riemann surface structure underlying such a metric space, as an algebraic curve, is defined over a number field. Cohen, Itzykson and Wolfart in \cite{CIW} showed that the same construction works using congruent hyperbolic triangles (triangles in the Bolyai--Lobachevsky plane), instead of Euclidean.

\subsection{Slalom polynomials}\label{s:slalom}

We shall use the notions of slalom\index{slalom polynomial}\index{polynomial!slalom} polynomial and slalom curve\index{slalom curve} and we shall recall the definitions. For a more detailed exposition we refer to the original article \cite{A1998} and the recent book \cite{A2021} by the first author.

  We start with the definition of the planar tree\index{planar tree} $\Gamma_P$ associated with a generic monic polynomial\index{generic monic polynomial} $P$ of degree $n+1$. This is the closure of the union of the flowlines $\gamma: I\to \mathbb{C}$ of $-\mathrm{grad}(\log\vert P\vert)$ that satisfy the following properties:
\begin{itemize}
\item $P$ and its derivative $P'$ do not vanish on the image of $\gamma$;
\item the image of $\gamma$ is maximal with respect to inclusion (in other words, $\gamma$ is not the restriction of a flowline defined on an interval $J$ strictly containing $I$);
\item the image of $\gamma$ is bounded in $\mathbb{C}$, except when $I$ is the positive real line and where this image converges to $\infty$.
\end{itemize}

Consider now a rooted planar tree 
$\Gamma$ with $n+1$ edges in which one vertex is connected to $+\infty$ (considered as the root) by an unbounded edge.  A \emph{slalom curve}\index{slalom curve} $Sl(\Gamma)$ of $\Gamma$ is an immersed copy of the circle  $\gamma:S^1 \to \mathbb{C}$ having $n$ transversal double points at the midpoints of the bounded edges of 
$\Gamma$ (Figure \ref{fig:root}). The curve $Sl(\Gamma)$ intersects $\Gamma$ only and transversely twice at the midpoints of the bounded edges and once at the unbounded edge  such that each bounded complementary region of the curve contains exactly one vertex.

A {\it slalom polynomial $SlP_\Gamma(z)$ for $\Gamma$}\index{slalom polynomial} is a generic\index{polynomial!slalom} monic polynomial $P$ of degree $n+1$ with $|P(s)|=1$ for each zero $s$ of $P'$
such that the colored trees $\Gamma_P$ and 
$\Gamma$ are  isotopic relative to $+\infty$. Generic means in this context that $P$ has $n+1$ distinct roots, and that $P'$ has $n$ distinct roots.

A {\it slalom polynomial}\index{slalom polynomial} is\index{polynomial!slalom} a generic monic polynomial $P$ that is a slalom polynomial for some tree $\Gamma_P$. 

Slalom polynomials\index{slalom polynomial}\index{polynomial!slalom}  $P$ share the following properties with Shabat and Chebyshev polynomials: $P'$ has $n={\rm degree}(P)-1$ distinct roots and all critical values of $P$ have absolute value $1$.
Thus, the roots of $P'$ are the double points of the curve $|P(z)|=1$, which is a slalom curve\index{slalom curve} for the tree $\Gamma_P$. 

\begin{theorem}[Existence of slalom polynomials] Given a rooted planar tree 
$\Gamma$ with $n+1$ edges as above, there exists a slalom polynomial $P$ for $\Gamma$.
\end{theorem}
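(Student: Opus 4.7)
My plan follows the standard strategy for constructing polynomials with prescribed combinatorics (as in Shabat polynomial theory): build a topological model of the branched cover, pull back the complex structure, and uniformize. First I would embed the rooted planar tree $\Gamma$ as a concrete subset of $\mathbb{C}$ respecting the cyclic orders at each vertex, with the unbounded edge exiting toward $+\infty$, and then construct a topological slalom curve $C$ around $\Gamma$ exactly as in the definition of $Sl(\Gamma)$: an immersed circle in $\mathbb{C}$ with $n$ transverse double points $c_1,\ldots,c_n$ at the midpoints of the bounded edges, running close to $\Gamma$ so that $\mathbb{C}\setminus C$ has exactly $n+1$ bounded components $R_0,\ldots,R_n$ (each containing one finite vertex of $\Gamma$) and one unbounded component $R_\infty$ through which the unbounded edge exits. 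Fix distinct targets $a_1,\ldots,a_n$ on the unit circle.

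Next I would assemble a topological degree-$(n+1)$ branched cover $P_0\colon\hat{\mathbb{C}}\to\hat{\mathbb{C}}$ piece by piece. On each $R_j$ take an orientation-preserving homeomorphism $R_j\to\overline{\mathbb{D}}$ sending the finite vertex to $0$. On $R_\infty\cup\{\infty\}$ take a degree-$(n+1)$ branched covering of $\hat{\mathbb{C}}\setminus\mathbb{D}$ fully ramified at $\infty\mapsto\infty$. The decisive combinatorial point is to choose these pieces so that they glue continuously along the arcs of $C$ and so that $P_0(c_i)=a_i$: the boundary of the disc $R_\infty\cup\{\infty\}$ is traversed $n+1$ times over the unit circle, the rules for passing from one arc of $C$ to the next at each $c_i$ are dictated by the cyclic order of edges of $\Gamma$ at its vertices, and the freedom available at this purely topological stage suffices to realize the required identifications. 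The local models at the branch points are $z\mapsto z^2$ at each $c_i$ and $z\mapsto z^{n+1}$ at $\infty$.

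Then I pull back the standard complex structure on the target sphere via $P_0$. Away from the critical points this is unambiguous because $P_0$ is a local homeomorphism, and at the critical points the local models are holomorphic, so by the removable singularity theorem the pulled-back structure extends to a smooth complex structure $J$ on the source $\hat{\mathbb{C}}$. The simply connected Riemann surface $(\hat{\mathbb{C}},J)$ is biholomorphic to the standard Riemann sphere by the Uniformization Theorem (Theorem \ref{t:uniformization}); let $\phi\colon\hat{\mathbb{C}}\to(\hat{\mathbb{C}},J)$ be such a biholomorphism, normalized so that $\phi(\infty)=\infty$. Then $P:=P_0\circ\phi$ is a holomorphic self-map of $\hat{\mathbb{C}}$, hence a rational function, and since $P^{-1}(\infty)=\{\infty\}$ it is a polynomial of degree $n+1$. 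A further affine change of coordinate in the source fixing $\infty$ makes $P$ monic.

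Finally I would verify the slalom conditions. By construction the $n$ critical values of $P$ are the prescribed $a_i$ on the unit circle, so $|P(s)|=1$ at every zero $s$ of $P'$; the $n+1$ zeros of $P$ are distinct, one per region, and $P'$ has $n$ distinct zeros, giving genericity; and the combinatorial structure of the slalom curve $P^{-1}(\{|w|=1\})$ together with the unstable manifolds of the saddles of $-\log|P|$ recovers $\Gamma_P$ isotopically to $\Gamma$ rel $+\infty$, by the very way the matchings were set up. The main obstacle is the combinatorial step in the second paragraph: one must verify that the boundary identifications between $R_\infty$ and the various $R_j$ can be arranged simultaneously, consistent with total ramification of degree $n+1$ at $\infty$ and with the prescribed cyclic orders of $\Gamma$. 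Once this is settled, the remainder (pullback of structure, uniformization, polynomiality, isotopy of trees) is routine.
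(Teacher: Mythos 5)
Your proposal is correct and follows essentially the same route as the paper's own proof: the authors likewise build a continuous model map $\phi$ sending each bounded region of the slalom curve diffeomorphically to the unit disc (vertex to $0$, half-edges to radial rays) and the unbounded region to the complement of the closed unit disc by a degree-$(n+1)$ covering holomorphic near $\infty$, then pull back the complex structure, uniformize, and adjust the uniformizing map so that $P=\phi\circ U$ is monic and generic. The only cosmetic difference is that you compactify and invoke the Uniformization Theorem on the sphere whereas the paper uniformizes $(\mathbb{C},J)$ directly, and the combinatorial gluing step you flag as the main obstacle is asserted with the same brevity in the paper's sketch.
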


\begin{proof}

There exists a continuous  mapping $\phi:\mathbb{C}\to \mathbb{C}$ that is smooth except at the midpoints of $\Gamma$ and such that
\begin{itemize}

\item the restriction of $\phi$ to the unbounded component of the complement of $Sl(\Gamma)$ is a degree $n+1$ covering map of
the complement of the closed unit disc in $\mathbb{C}$;

\item the restriction of $\phi$ to a bounded region of the complement of $S(\Gamma)$ is a diffeomorphism to the open unit disc in $\mathbb{C}$ that sends the vertex to $0$ and half edges to radial rays;

\item $\phi$ is holomorphic near $\infty$.
\end{itemize}

\begin{figure}[h]
\centering
\vskip-1cm
\includegraphics[scale=0.6]{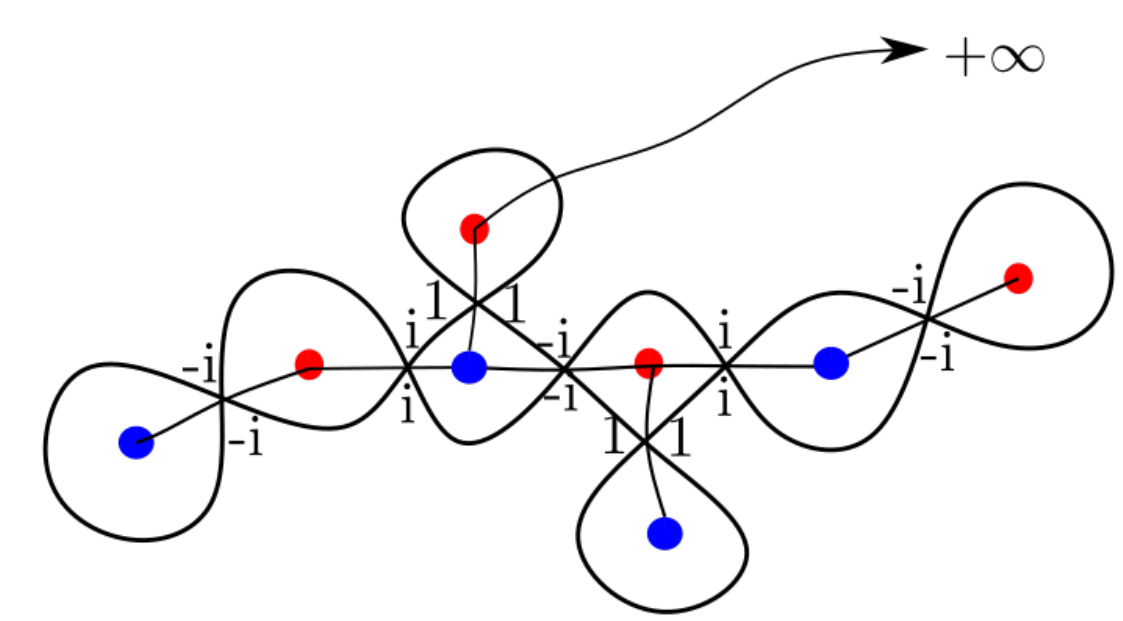}
\caption{A rooted colored tree having one edge asymptotic to $+\infty$ with a slalom curve. The double points of this curve are the critical points of the polynomial.}
\label{fig:root}
\end{figure}

Let $J$ be the conformal structure on 
$\mathbb{C}$ such that $\phi:(\mathbb{C},J)\to \mathbb{C}$ is holomorphic and let $U:\mathbb{C}\to (\mathbb{C},J)$ be a uniformisation of the structure $J$. The composition $P=\phi \circ U:\mathbb{C}\to \mathbb{C}$ is a degree $n+1$ polynomial. 
The polynomial $P$ has $n+1$ roots, the derivative has $n$ roots at the points $s$ with $U(s)$ being a midpoint of an edge. 
By adjusting the uniformisation map $U$ one achieves that $P$ is monic and generic.

The flowline from $+\infty$ ends at a root of
$P$ that we color red and which becomes the root of $E_P$. This red root can be changed by substituting $\lambda z $ to $z$. This substitution turns the picture and gives $2n$ possibilities for attaching the unbounded edge. 
Color the tree $\Gamma$ with the two colors, red and blue. Choose a disc $D_R$ that contains $\Gamma$ up to a part of the real axis.

It follows that $\Gamma$ and $\Gamma_P$ are isotopic relative to $+\infty$, showing that all the ${\rm Cat}(n)$ combinatorial data consisting of a rooted planar tree as above are realized. 
Here ${\rm Cat}(n)$ designates the $n$-th   \emph{Catalan number}, that is, the number of distinct triangulations of a convex polygon with $n+2$ sides obtained by adding diagonals.

 The polynomial $P=P_\Gamma$ is a slalom polynomial for the tree 
$\Gamma$. \qed 
\end{proof}

The slalom polynomial\index{slalom polynomial}\index{polynomial!slalom} $P_\Gamma$ is not unique up to holomorphic changes of coordinates in the range and in the target. Uniqueness can be forced  for trees with maximum valence $\leq k$ if one imposes moreover that the critical values are roots of unity of order depending on $k$. See Figure \ref{fig:root}. For $k=3$, order four works. In Figure \ref{fig:root}, the values $1, i, -i, -1$ are the values of the polynomial at the given points.

In the example of Figure \ref{fig:z4}, the polynomial $z^4+z$ is the slalom polynomial of the tree $D_4$. The Chebyshev polynomials\index{Chebyshev polynomial} are up to a real translation precisely the slalom polynomials with  critical values among $\pm 1$. See also \cite{AC2021}.

\begin{figure}[h]
\centering
\includegraphics[scale=0.25]{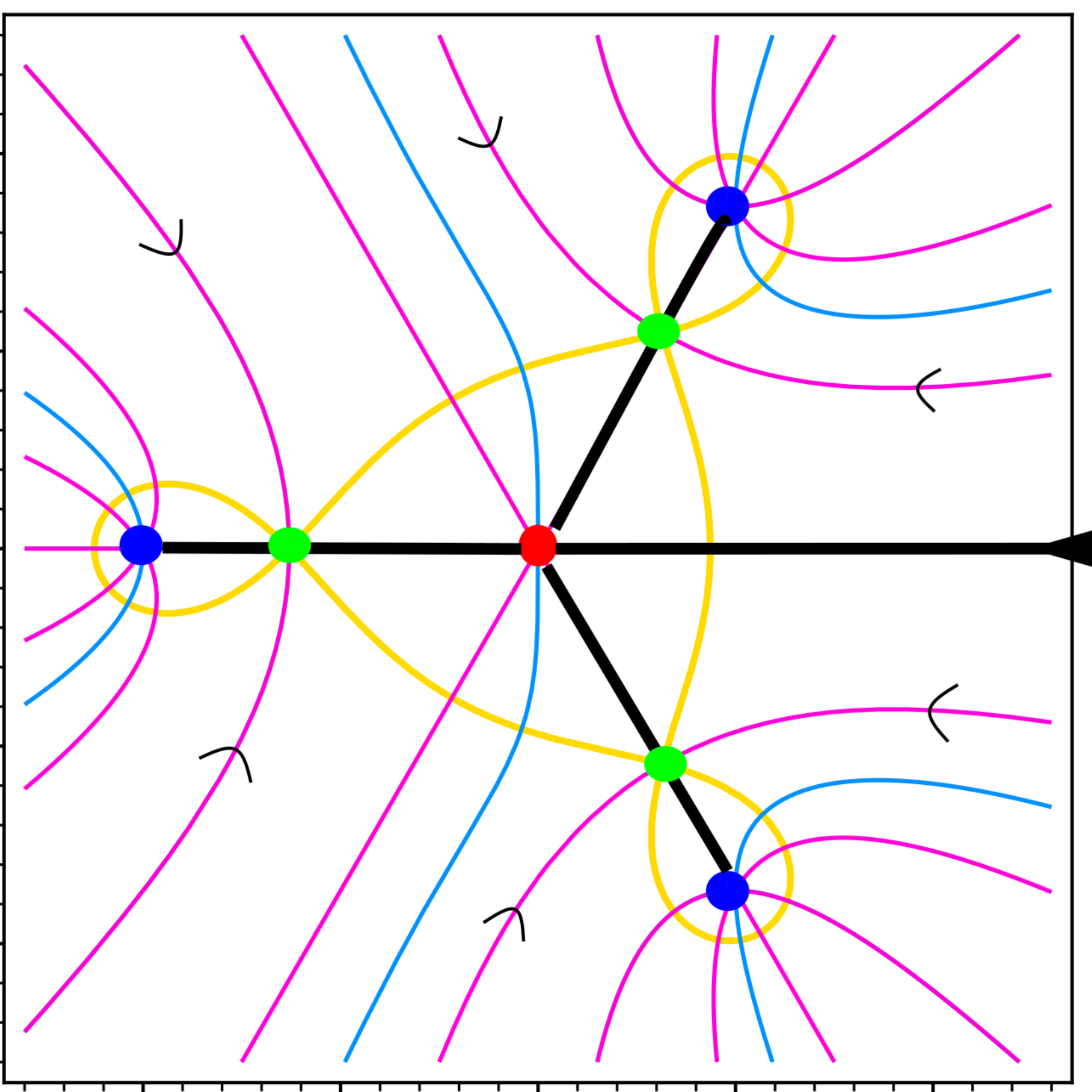}  
\caption{Flowbox decomposition for $P=z^4+z$. The green dots are the zeros of $P'$, the saddle points of the function $|P|$ and bifurcation points of the vector field $-{\rm grad}(|P|)$. The zeros of $P$, which are the red and blue dots, span a planar tree of Dynkin diagram $D_4$. The flowline from $+\infty$ (the horizontal segment on the middle right part of the figure) ends at the triple point of $D_4$. The separating flowlines are marked with arrows. The flowlines $P^{-1}(i\mathbb{R})$ are colored blue.}
\label{fig:z4}
\end{figure}

As in \cite{A2017} one obtains a cell-decomposition
of the space of monic polynomials 
${\rm Pol}_d$ of degree $d$ and of the complement ${\rm Pol}_d\setminus \Delta$ of the discriminant  $\Delta$ (the set of polynomials having at least one multiple root).

\begin{theorem}
The equivalence relation $\Gamma_P \sim \Gamma_Q$ induces a stratification with ${\rm Cat}(d-1)$ top-dimensional strata on the spaces ${\rm Pol}_d$ and ${\rm Pol}_d\setminus \Delta$. Moreover, the top-dimensional strata are cells and have a  representative by a slalom polynomial. \qed

\end{theorem}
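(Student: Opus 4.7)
The plan is to proceed in three stages: first identify the open locus of ``generic'' polynomials on which $\Gamma_P$ is continuously defined; then count top strata using the existence theorem; finally exhibit each top stratum as a cell by parametrizing it via its critical values together with a translation parameter.

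First I would let $\mathrm{Pol}_d^{\mathrm{gen}} \subset \mathrm{Pol}_d \setminus \Delta$ be the open dense subset of monic polynomials $P$ with $d-1$ distinct critical points, pairwise distinct critical values, and no separatrix of $-\mathrm{grad}\log|P|$ escaping to $+\infty$ other than along the positive real ray. On this set the rooted plane tree $\Gamma_P$ is well defined: its $d$ vertices are the roots of $P$, its $d-1$ bounded edges carry the critical points as their midpoints, and its distinguished unbounded root-edge runs out to $+\infty$. Since the flow depends continuously on $P$ off the critical locus, the isotopy class of $\Gamma_P$ relative to $+\infty$ is locally constant on $\mathrm{Pol}_d^{\mathrm{gen}}$, so the equivalence $\sim$ partitions this set into disjoint open classes; taking closures yields a stratification of both $\mathrm{Pol}_d$ and $\mathrm{Pol}_d \setminus \Delta$ whose top-dimensional strata are precisely these open classes.

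Next I would enumerate the top strata. By the existence theorem just proved, every rooted plane tree $\Gamma$ with $d$ vertices is realized as $\Gamma_P$ for some slalom polynomial $P$, so each isotopy class contributes a nonempty top stratum together with a distinguished slalom representative. The standard bijection between rooted plane trees with $d$ vertices and triangulations of a convex $(d+1)$-gon (already recalled in the statement) yields exactly $\mathrm{Cat}(d-1)$ isotopy classes, giving the claimed count.

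Finally I would show that each top stratum $S_\Gamma$ is a cell, using the flowbox decomposition illustrated in Figure \ref{fig:z4}. The combinatorial type of this decomposition is determined by $\Gamma$, and its continuous moduli are the ordered tuple of critical values $(c_1,\dots,c_{d-1}) \in (\mathbb{C}^*)^{d-1}$, labeled by the critical points of $\Gamma$, together with one complex translation parameter $a \in \mathbb{C}$. The map $P \mapsto \bigl((c_1,\dots,c_{d-1}),a\bigr)$ is a holomorphic bijection from $S_\Gamma$ onto an open subset $U_\Gamma \subset \mathbb{C}^{d}$. The hardest step is to prove that $U_\Gamma$ is in fact a cell, i.e.\ homeomorphic to an open ball: the natural approach is to fix the critical-value datum of the slalom representative as a basepoint and show that $U_\Gamma$ is star-shaped with respect to it, by verifying that radial rescaling of the critical values produces a continuous one-parameter family of flowbox decompositions of unchanged combinatorial type, so that the deformed polynomials remain in $S_\Gamma$ throughout the homotopy.
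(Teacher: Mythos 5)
The paper offers no in-text proof of this theorem: it is stated with a reference to \cite{AC2021}, and the only ingredient proved in the text is the existence theorem for slalom polynomials, which supplies the ``representative by a slalom polynomial'' clause. Your skeleton (generic locus, realization via the existence theorem, the ${\rm Cat}(d-1)$ count, parametrization by critical values plus a translation parameter) is the right one and is consistent with the flowbox strategy of \cite{AC2021}, but the two steps that carry all the weight are asserted rather than proved. The first is injectivity of $P\mapsto\bigl((c_1,\dots,c_{d-1}),a\bigr)$ on a class $S_\Gamma$. This map is the (translation-augmented) Lyashko--Looijenga map, which on all of ${\rm Pol}_d$ is a branched covering of large degree; injectivity on a fixed combinatorial class is precisely the nontrivial statement that the rooted tree together with the labeled critical values determines the polynomial, and nothing in your sketch establishes it. Without it you lose not only the ``holomorphic bijection'' but, more importantly, connectedness of each equivalence class: an equivalence class that is open need not be connected, and if some class split into several components the number of top-dimensional strata would exceed ${\rm Cat}(d-1)$. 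Your counting paragraph silently identifies ``isotopy classes of trees'' with ``top-dimensional strata,'' which is exactly what must be shown.

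The second gap is that the star-shapedness argument is circular as formulated. To ``deform the polynomials along the radial rescaling of the critical values'' you must lift a path in critical-value space through the branched covering just mentioned; the lift exists for all time only if the path stays proper over the stratum, and it stays in $S_\Gamma$ only if the combinatorial type does not jump --- which is the claim under proof. The jumping loci are real-codimension-one walls where $-\mathrm{grad}\log|P|$ acquires a saddle connection, or where the distinguished flowline from $+\infty$ runs into a zero of $P'$; note that your genericity conditions (distinct critical points, distinct critical values) do \emph{not} exclude saddle connections, so even your preliminary claim that $[\Gamma_P]$ is locally constant on the generic locus already needs this Morse--Smale condition, and these walls are precisely the codimension-one strata of the decomposition. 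Relatedly, ``taking closures yields a stratification of ${\rm Pol}_d$'' is false as stated: closures of the open classes overlap, and one must separately stratify the walls and the discriminant (in \cite{A2017} this is done via the pictures ${\rm Pic}(P)$) and verify the frontier condition. In \cite{AC2021} the cell property is obtained by controlling the flowbox decomposition along an explicit contraction; that control is the substance your outline labels ``the hardest step'' but does not supply.
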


For this theorem, see \cite{AC2021}.

\subsection{A stratification of the space of monic polynomials}\label{s:stratification}

The space of monic univariate complex polynomials ${\rm Pol}_d$ of degree $d$ is a complex affine space of dimension $d$. 
The discriminant $\Delta_d$ is an important sub-variety. A cell decomposition of ${\rm Pol}_d$ that induces a cell decomposition of 
$\Delta_d$ is of interest, for instance in the study of the braid group. The following defines an equivalence relation on ${\rm Pol}_d$, whose associated equivalence classes define a cell decomposition such that the discriminant is a union of cells, see \cite{A2017}.

Define the \emph{picture} ${\rm Pic}(P)$ of a monic polynomial to be the graph ${\rm Pic}(P)=P^{-1}(\mathbb{R}\cup i\mathbb{R})$. Call two polynomials $P,Q$ equivalent if the graphs ${\rm Pic}(P)$ and ${\rm Pic}(Q)$ are isotopic by an isotopy that preserves the asymptotes.

The picture ${\rm Pic}(P)$ of a degree $d$ monic polynomial is a planar forest without
terminal vertices, but with $4d$ edges going to $\infty$ asymptotic to the rays corresponding to $4d$ roots of unity. The precise combinatorial characterization is given in \cite{A2017}. The problem of showing that each 
possible equivalence class is realized was solved using Riemann's Uniformisation Theorem. The number of top-dimensional cells is the Fuss--Catalan number\index{Fuss--Catalan number} 
$\frac{1}{3d+1}{4d \choose d }$.

\subsection{Rational maps, Speiser colored cell decompositions, classical knots and links.}\label{ss:Rational}

Let $f:\mathbb{P}^1(\mathbb{C})\to \mathbb{P}^1(\mathbb{C})$ be a holomorphic map of degree $d>0$. Such a map is also called a rational map, since in the above coordinate $z$ its expression $f(z)=\frac{P(z)}{Q(z)}$ is a ratio of two polynomials $P(z)\not=0,Q(z)\not=0$ of degrees $d_1,d_2\geq 0$ with $d=\max\{ d_1,d_2\}$. 

Let $\Delta(f)$ be the finite set of critical values of $f$. The restriction of $f$ to the complement of the set  $f^{-1}(\Delta(f))$ is a covering map of degree $d$.
The set $f^{-1}(\Delta(f))$ contains the set of critical points of $f$. For each $a\in \mathbb{P}^1(\mathbb{C})$,
define the \emph{defect} $\delta(a)\geq 0$ by $\delta(a)=d-\#f^{-1}(a)$. Observe that $\delta(a)=0$ except for $a\in \Delta(f)$. The Riemann--Hurwitz formula becomes here $$\sum_{a\in \Delta(f)}\delta(a)=2d-2,$$ as follows from computing the Euler characteristic of the domain of $f$, $$2=\chi(\mathbb{P}^1(\mathbb{C}))=d\chi(\mathbb{P}^1(\mathbb{C}))-\sum_{a\in \Delta(f)}\delta(a).$$

Let $\beta(f)=\#\Delta(f)$ be the number of critical values of $f$. Clearly, $\beta(f)\leq 2d-2$. Recall (\S \ref{s:Speiser}) that a Speiser  curve for $f$ is an oriented simple closed smooth curve $\gamma$ in $\mathbb{P}^1(\mathbb{C})$ that passes through all the critical values $\Delta(f)$. Observe that two such curves $\gamma,\gamma'$  are smoothly isotopic, keeping $\Delta(f)$ fixed, if and only if the induced cyclic order on the elements of $\Delta(f)$ agree. So up to isotopy one has $(\#\Delta(f)-1)!$ isotopy classes of curves $\gamma$ through the points of $\Delta(f)$.
A Shabat polynomial or a Belyi map\index{Shabat polynomial}\index{polynomial!Shabat}
with three real critical values admits up to isotopy two Speiser curves, one being  $\mathbb{P}^1(\mathbb{R})=\mathbb{R}\cup\{\infty\}$.

From a Speiser curve $\gamma$ one constructs the Speiser graph as the pair $\Gamma_{f,\gamma}=(f^{-1}(\gamma), f^{-1}(\Delta(f)))$
in $\mathbb{P}^1(\mathbb{C})$. The vertices are colored by the elements of the branching set $\Delta(f)$ and the edges are oriented by lifting the orientation of $\gamma$. The Speiser curve $\gamma$ being smooth, the curve $f^{-1}(\gamma)\subset \mathbb{P}^1(\mathbb{C})$ is the image of the immersion $\iota:\dot{\cup}_k S^1 \to \mathbb{P}^1(\mathbb{C}), \, k\leq d,$ of finitely many copies of $S^1$  ($\dot{\cup}$ denotes disjoint union). The immersion  
$\iota$ has no tangencies and equi-angular multiple points at the critical points of $f$ (see Figure \ref{fig:Speiser-Belyi}). Each complementary region of $P_{f,\gamma}$ is a polygon with $\beta(f)$ vertices. We color such a polygon $R$ by $\pm$: by $+$ only if the induced orientation on $\partial R$ agrees with the orientation of $\Gamma_{f,\gamma}$, in which case the colors of the vertices of $R$ appear in the cyclic order induced by $\gamma$. The $\pm$ coloring of the polygons is a checkerboard coloring.

The Speiser graph $\Gamma_{f,\gamma}$ is in fact an immersion of circles without tangencies in $\mathbb{P}^1(\mathbb{C})=S^2$. The immersion $\iota$ is not a generic immersion, since self-intersection of $3$ or more local branches occur. But, at all self-intersections the local branches intersect pairwise transversely. 

We recall that a \emph{divide}\index{divide} in the sense of \cite{A'C, A1999} is the image of a relative generic immersion of a finite union of copies of the unit interval $(I, \partial I)$ in the unit disc $(D,\partial D)$. 
From a divide $P$, one defines a  link $L(P)$ in $S^3$ by
$$
L(P)=\{(x,u)\in T(P) \subset S^3 \hbox{ such that } \Vert (x,u)\Vert = 1\}
$$
where the 3-sphere $S^3$ is seen here as the unit sphere in the tangent bundle $T(\mathbb{R}^2)$ of $\mathbb{R}^2$.

By a result of  \cite{A'C}, if the image of the immersion defining a divide is connected, then the associated link is fibered. 
The theory of links associated with divides is closely related to singularity theory \cite{A1974, G-Z}. In the paper  \cite{A'C},  the monodromy of the fibered link associated with a connected divide is described in terms of the combinatorics of the divide.

Many constructions for divides still apply to Speiser graphs.

Let $\dot{\Gamma}_{f,\gamma}$ be the set of length $1$ tangent vectors to the Speiser graph$\Gamma_{f,\gamma}$.
Let $\dot{\Gamma}^+_{f,\gamma}$ be the set of oriented length $1$ tangent vectors to $\Gamma_{f,\gamma}$. 

Recall that the 3-sphere $S^3$ and the Lie group ${\rm SU}(2)$ are diffeomorphic and cover by a $2\to 1$ map $c$ the space of length $1$ tangent vectors to $\mathbb{P}^1(\mathbb{C})$. Hence $\dot{\Gamma}_{f,\gamma}$ and $\dot{\Gamma}^+_{f,\gamma}$ can be considered as subsets in $\mathbb{P}^3(\mathbb{R})$ and lift to
 the classical knots and links $c^{-1}(\dot{\Gamma}_{f,\gamma}), c^{-1}(\dot{\Gamma}^+_{f,\gamma})$ in $S^3$, which we call Speiser links.\index{Speiser link} See Figure \ref{fig:Speiser-Belyi} for an example of a Speiser graph and \ref{fig:Speiser-link} for its Speiser link.
 
 \begin{figure}[h] 
\centering
\includegraphics[scale=0.25]{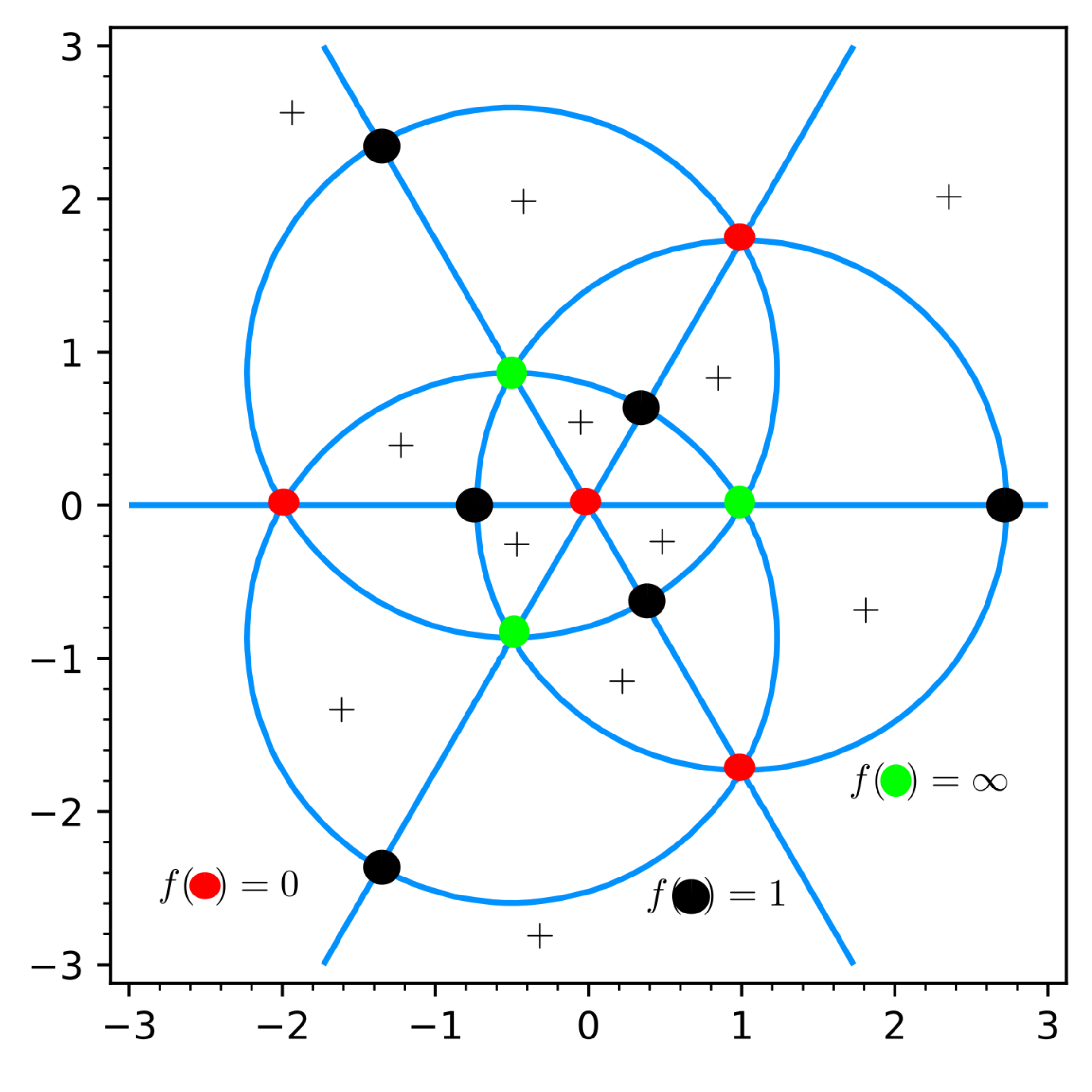}
\caption{The Speiser graph $\Gamma_{f,\gamma}$ for the Belyi map $f=\frac{z^3(z^3+8)^3}{64(z^3-1)^3}$ of degree $12$ and $\gamma=\mathbb{R}\cup\{\infty\}$. The covering has three critical values, $0, 1, \infty$, and $\gamma$ is the Speiser curve passing through these points. The twelve regions marked with $+$ map to $\mathbb{C}_+$.}
\label{fig:Speiser-Belyi}
\end{figure}

\begin{figure}[h]
\centering
\includegraphics[scale=1.1]{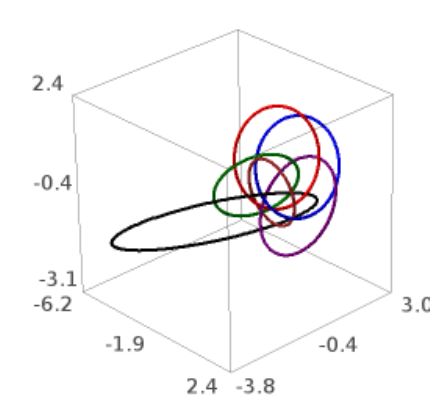}
\caption{The Speiser link $c^{-1}(\dot{\Gamma}_{f,\gamma})$ for the Belyi map\index{Belyi map} $f=\frac{z^3(z^3+8)^3}{64(z^3-1)^3}$ of degree $12$ consisting of $6$ components. Each component is a great circle on the round $S^3$.}
\label{fig:Speiser-link}
\end{figure}

\begin{theorem} The subsets $\dot{\Gamma}_{f,\gamma}$ and $\dot{\Gamma}^+_{f,\gamma}$
are links in $\mathbb{P}^3(\mathbb{R})$.  The lifts to $S^3$ are classical links. Moreover, the link $c^{-1}(\dot{\Gamma}_{f,\gamma})\subset S^3$ is fibered.
\end{theorem}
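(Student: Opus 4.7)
For parts 1 and 2, the Speiser graph $\Gamma_{f,\gamma}$ is the image of a smooth immersion $\iota\colon \dot{\cup}_k S^1 \to \mathbb{P}^1(\mathbb{C})$ whose only singularities are pairwise transverse multi-point crossings at preimages of the critical values, with no tangencies among local branches, as recalled just before the theorem. Differentiating and normalizing, I would canonically lift each immersed component to $T_1 \mathbb{P}^1(\mathbb{C}) \cong \mathbb{P}^3(\mathbb{R})$ via $t \mapsto (\iota(t),\, \iota'(t)/\|\iota'(t)\|)$. The no-tangency hypothesis is exactly what ensures that distinct local branches at a multi-point lift to distinct unit tangent vectors, so each component lifts to a smooth embedded circle and lifts of different components remain disjoint. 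Hence $\dot{\Gamma}^+_{f,\gamma}$ is a smooth disjoint union of embedded circles in $\mathbb{P}^3(\mathbb{R})$, i.e.\ a link, and $\dot{\Gamma}_{f,\gamma}$ is obtained by adjoining the fiberwise antipode, still a link. Since $c\colon S^3 \to \mathbb{P}^3(\mathbb{R})$ is a smooth local diffeomorphism, preimages of links are classical links in $S^3$.

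For the fibration assertion, my plan is to adapt A'Campo's construction for divide links \cite{A'C}. I would choose a defining function $\phi\colon \mathbb{P}^1(\mathbb{C}) \to \mathbb{R}$ for the Speiser curve $\gamma$, i.e.\ $\phi^{-1}(0) = \gamma$ with $d\phi$ non-vanishing on $\gamma$, and pull back to $\rho := \phi \circ f$. Then $\rho^{-1}(0) = \Gamma_{f,\gamma}$ and the critical set of $\rho$ coincides with the critical set of $f$; in a holomorphic chart near a critical point of branching order $m-1$, $\rho$ has the homogeneous model $\mathrm{Im}(z^m)$, whose zero set is the $m$-fold transverse cross $\{\arg z = k\pi/m\}$, exactly the local picture of $\Gamma_{f,\gamma}$ there. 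I would then define the candidate fibration map $\Phi\colon S^3 \to \mathbb{C}$ by
\[
\Phi(q) \;=\; \rho(\pi(V_q)) \;+\; i\cdot (d\rho)_{\pi(V_q)}(V_q), \qquad V_q = c(q),
\]
where $\pi$ is the base-point projection to $\mathbb{P}^1(\mathbb{C})$. Away from the fibers of $c$ over the critical points of $f$, a direct computation shows that $\Phi$ vanishes exactly on $c^{-1}(\dot{\Gamma}_{f,\gamma})$ and that $\arg\Phi$ is a submersion onto $S^1$; this would provide the ambient piece of the desired fibration.

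The main obstacle will lie at the fibers of $c$ over the critical points of $f$, where $d\rho$ vanishes and $\Phi$ drops rank along the entire $S^1$-fiber, hence on more than the link. Following A'Campo, I would remedy this by replacing $\Phi$ on a small tubular neighborhood of each such fiber by a local Milnor-type model tailored to $\mathrm{Im}(z^m)$, whose vanishing set agrees there with $c^{-1}(\dot{\Gamma}_{f,\gamma})$, and by splicing this local model with the ambient $\arg\Phi$ via a radial interpolation on the collar. The delicate step is to check that this splicing is consistent and yields a globally smooth fibration $\Psi\colon S^3 \setminus c^{-1}(\dot{\Gamma}_{f,\gamma}) \to S^1$; this should rest on the holomorphicity of the local model, which makes the relevant space of $S^1$-valued submersions on the overlap connected, so that the required homotopy exists. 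Once the fibration is in hand, a combinatorial description of the geometric monodromy by Dehn twists along vanishing cycles attached to edges and regions of the Speiser graph should follow, in the spirit of \cite{A'C}.
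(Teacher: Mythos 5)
Your handling of the first two assertions is correct and is essentially the paper's own argument: the tangent lift of the immersed circles, with the no-tangency condition guaranteeing that the $2m$ local branches at a critical point lift to distinct points of the unit tangent circle, followed by pulling back along the $2{:}1$ local diffeomorphism $c$.

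The fibration part, however, contains a genuine gap, and it sits exactly where the theorem goes beyond the literature you invoke. A'Campo's fibration theorem in \cite{A'C} is proved for \emph{divides}, i.e.\ generic immersions with only ordinary double points, and the submersion verification there depends on an adapted Morse function (one maximum in each $+$-region, one minimum in each $-$-region, quadratic saddles at the double points). The Speiser graph is \emph{not} a divide: at a critical point of branching order $m-1$ it has an equiangular $m$-fold multiple point, modelled on $\{\mathrm{Im}(a^m)=0\}$, and this is precisely the locus where your ambient map $\Phi$ degenerates along an entire circle fiber. Your proposed remedy --- splicing in a local Milnor-type model and interpolating on a collar --- is asserted, not proved: a radial interpolation of two $S^1$-valued maps is not in general a submersion, and the claim that ``the space of $S^1$-valued submersions on the overlap is connected'' is neither established nor, by itself, sufficient (you would need a homotopy \emph{through fibrations} that is standard near the binding, which is the actual content of the gluing lemmas in the papers of Ishikawa and Ishikawa--Naoe \cite{I,I-N}). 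There is also a smaller slip: since $f$ is holomorphic, $\mathrm{crit}(\rho)=\mathrm{crit}(f)\cup f^{-1}(\mathrm{crit}(\phi))$, and $\phi\colon S^2\to\mathbb{R}$ necessarily has extrema, so $\Phi$ must also be analyzed on the circle fibers over the $f$-preimages of these extrema (these play the role of A'Campo's maxima and minima, and your sketch never addresses them).

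The paper avoids all of this local analysis by a different, much shorter route: since the branches at each multiple point are pairwise transverse, a $C^1$-small perturbation of $\dot{\Gamma}_{f,\gamma}$ near the critical points replaces the Speiser graph by a genuine divide $P_{f,\gamma}$ on $S^2$ with only double points and contractible complementary regions, \emph{without changing the isotopy class of the link}; one then quotes the extension of the divide fibration theorem from the disc to the tangent circle bundle $T_{l=1}S^2=\mathbb{P}^3(\mathbb{R})$ due to Ishikawa \cite{I} and Ishikawa--Naoe \cite{I-N}, and observes that fiberedness pulls back along the double cover $c\colon S^3\to\mathbb{P}^3(\mathbb{R})$, since composing the $S^1$-valued fibration of the complement with $c$ is again a fibration. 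If you want to keep your direct construction, you must either carry out the submersion and splicing estimates at the $m$-fold points in full, or reduce to double points first --- in which case you should say so and cite the known theorems, which is what the paper does.
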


\begin{figure}[h]
\centering
\includegraphics[scale=0.60]{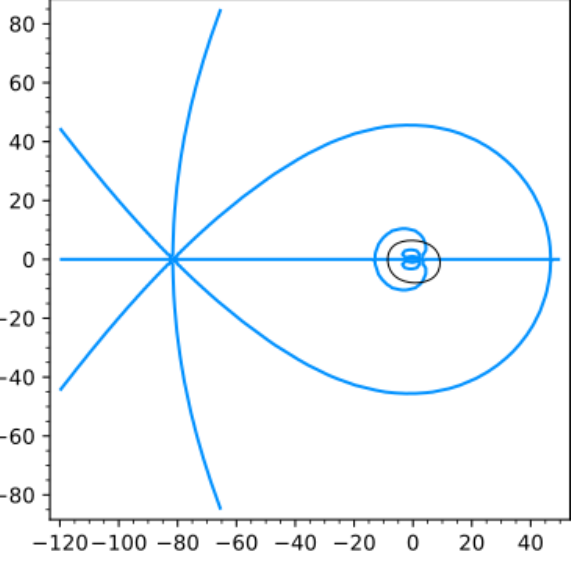}\includegraphics[scale=0.16]{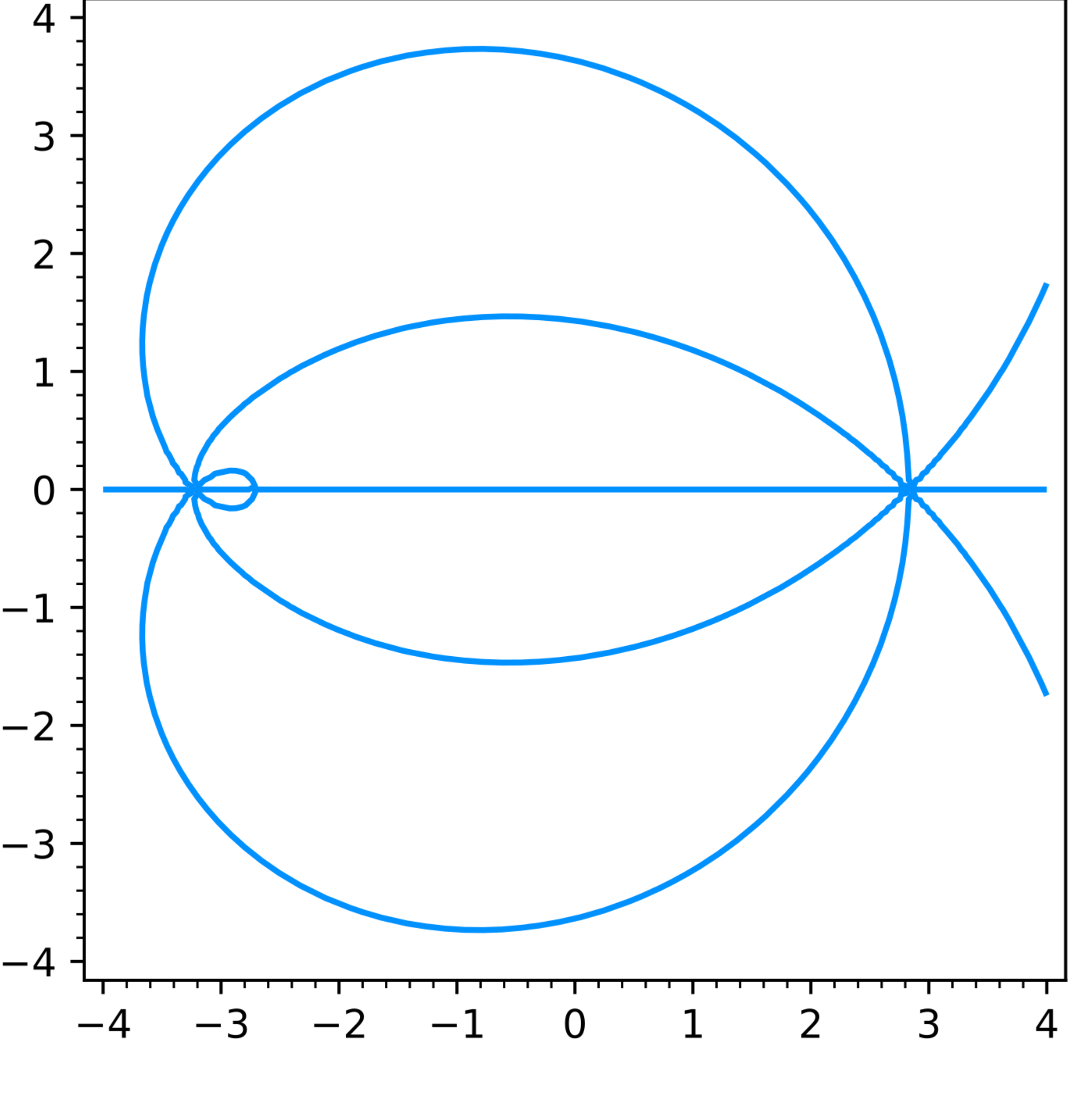}
\caption{The divide $\Gamma_{f,\gamma}$ for the Belyi map $f=\frac{(z^2+30\sqrt{8}z+264)^4}{216(z-\sqrt{8})^4(z+\sqrt{8})}$ of degree $8$ and $\gamma=\mathbb{R}\cup\{\infty\}$. The black encircled region in the picture on the left is  zoom enlarged and translated in the picture  on the right.}
\end{figure}

\begin{proof} Let $\theta:\gamma \to T\gamma\subset T_{l=1}\mathbb{P}^1(\mathbb{C})$ be a continuous oriented
vector field of length $1$ along $\gamma$. Each component of $\gamma\setminus \Delta(f)$ together with the restriction of the vector field $\theta$ lifts by $c$ to $d={\rm degree}(f)$ smooth arcs in $T_{l=1}\mathbb{P}^1(\mathbb{C})$.  The $d\beta(f)$ lifted arcs are pairwise disjoint. At a critical point $p$ of $f$,
$2k$ arcs join. The local germ of $\dot{\Gamma}^+_{f,\gamma}$ at $p$ is diffeomorphic to the germ at $0$ of the set
$\{a\in \mathbb{C}\mid {\rm Im}(a^k)=0\}$.
So, after a small $C^1$ perturbation of $\dot{\Gamma}_{f,\gamma}$ locally near the critical points of $f$, the graph  $\dot{\Gamma}_{f,\gamma}$ 
can be deformed to a divide with checkerboard coloring $P_{f,\gamma}$ on $S^2=\mathbb{P}^1(\mathbb{C})$. The connected components of the complement $P_{f,\gamma}$ are contractible and moreover $P_{f,\gamma}$ has no self-intersection other than double points. The construction in \cite{A'C}
associates the knot or link $c^{-1}(\dot{\Gamma}_{f,\gamma})$ in $T_{l=1}S^2=\mathbb{P}^3(\mathbb{R})$. 
The fibration  property as in \cite{A'C} was extended by Masaharu Ishikawa and Hironobu Naoe \cite{I,I-N} and shows that $\dot{P}_{f,\gamma}$ is a fibered knot or link in $\mathbb{P}^3(\mathbb{R})=T_{l=1}S^2$. The same holds for its lift $c^{-1}(\dot{P}_{f,\gamma})\subset S^3$ and also the link $c^{-1}(\dot{\Gamma}_{f,\gamma})\subset S^3$.\qed
\end{proof}

The construction of the fiber surface and geometric monodromy  for the link $c^{-1}(\dot{\Gamma}_{f,\gamma})\subset S^3$ as in  \cite{A'C} is as follows. 
Let $P_{f,\gamma}$   be a small $C^1$-deformation of $\dot{\Gamma}_{f,\gamma}$ with only double points.
The links $c^{-1}(\dot{P}_{f,\gamma})$ and $c^{-1}(\dot{\Gamma}_{f,\gamma})$ are isotopic, hence equivalent as links. The main step in the construction is the choice of a Morse function $h:S^2 \to \mathbb{R}$ having one maximum $+1$ in each $+$-region, one minimum $-1$ in each $-$-region, and $P_{f,\gamma}=h^{-1}(0)$ as critical level through all saddle points. Define
$F_h=\{V_p\in T_p S^2 \mid h(p)\in ]0,1[,\,
(Dh)_p(V_p)=0,\, ||V_p||_{\rm Eucl}=1 \}\subset T_{l=1}S^2=\mathbb{P}^3(\mathbb{R})$. The closure $\bar{F_h}$ of $F_h$ in $\mathbb{P}^3(\mathbb{R})$ is a surface with boundary spanning the link $\dot{P}_{f,\gamma}$. The lift $c^{-1}(\bar{F_h}) $ is a spanning surface  and its interior $F_{f,\gamma}$ a fiber surface for 
the link $c^{-1}(\dot{P}_{f,\gamma})$.

On the surface $F$ appears a system of simple closed curves: for each maximum $p$ of $h$ the circle of length $1$ tangent vectors in the kernel of $(Dh)_p$, each gradient line of $h$ that connects a maximum to a minimum through a saddle point of $h$ lifts to a simple closed curve on $F$, and to each minimum of $h$ corresponds a simple closed curve built with pieces from preceding ones, see \cite{A'C}. The monodromy is a composition of the positive Dehn twists along these curves.

The examples of Belyi maps\index{Belyi map} are taken from the papers of K. Filom \cite{Filom2018} and K. Filom and A. Kamalinejad \cite{F-K2016}
in which dessins\index{dessin d'enfants} on modular curves are computed, studied and used. In the first example the divide is the union of  Moebius circles, which is   not always the case. More complex components can appear as the second example shows.

The link $L_{f,\gamma}$ of a rational Belyi map\index{Belyi map} $f:\mathbb{P}^1(\mathbb{C})\to\mathbb{P}^1(\mathbb{C})$
where the Speiser curve $\gamma=\mathbb{R}\cup\{\infty\}$ can be given as follows
explicitly as the closure in $S^3\subset \mathbb{C}^2$ of the set 
$$\{\frac{\lambda}{\sqrt{1+b\bar{b}}}(1,b)\mid b\in \mathbb{C}, f(b)\in \mathbb{R}\setminus \{0,1,\infty\}, \lambda^2=f'(b)/|f'(b)|\}$$
which is the union of $12d$ smooth arcs, $d={\rm degree}(f)$.

The tangential lift of an oriented Moebius circle $m$ on $\mathbb{P}^1(\mathbb{C})$ to $S^3={\rm SU}(2)$ is an oriented great circle on $S^3$. Indeed, by the isometric ${\rm SU}(2)$ action, move $m$ to be the parametrized curve
$s\in [0,2\pi] \mapsto m(s)=r\,\cos(s)+ri\,\sin(s)\in \mathbb{C}\cup \{\infty\}$. The curve $M(s)$ on $S^3$ given by 
$$s\in [0,4\pi] \mapsto \frac{1}{r^2+1} (-\sin(s/2)+i\,\cos(s/2),-r\,\sin(s/2)+ri\,\cos(s/2))\in \mathbb{C}^2$$ is its lift. From $M''(s)=\frac{-1/4}{r^2+1}M(s)$ follows the claim.

Let $D_1,D_2$ be closed disks in $\mathbb{P}^1(\mathbb{C})$ having 
as oriented boundary the Moebius circles $m_1,m_2$.
If $D_1,D_2$ are complementary, the circles differ only by orientation as do their lifts. If the boundaries of $D_1,D_2$ touch tangentially in one point, the lifts $M_1,M_2$  intersect in two points. If the intersection
$D_1\cap D_2$ is open, then the lifts $M_1,M_2$ are disjoint great circles.

Orient $S^3={\rm SU}(2)$ as the boundary of the unit ball in $\mathbb{C}^2$. Assume that $M_1,M_2$ are disjoint, so that
the boundaries of $D_1,D_2$ are disjoint or meet in two points. The linking number of the lifts $M_1,M_2$ 
of the oriented boundaries $m_1,m_2$ is given by 
${\rm Lk}_{S^3}(M_1,M_2)=1-\# m_1\cap m_2$.

In our example above with $6$ great circles as lifts, all $15$ pairwise linking numbers are equal to $-1$.

\bigskip 

\noindent {\bf Acknowledgement} Part of this survey is a revision of notes for lectures that were given by the first author at a CIMPA school in Varanasi (India), in December 2019. The authors would like to thank Bankteshwar Tiwari, the local organiser of this school, for his care and his excellent management. The second author is supported by the lnterdisciplinary Thematic lnstitute CREAA, as part of the ITl 2021-2028 program of the University of Strasbourg, CNRS and Inserm (ldEx Unistra ANR-10-IDEX-0002), and the French lnvestments for the Future Program.

\printindex

\noindent  Norbert A'Campo,  Universit\"at Basel, Departement Mathematik und Informatik, Fachbereich Mathematik, Spiegelgasse 1, CH-4051 Basel, Schweiz;

 \noindent Norbert.ACampo@unibas.ch

\medskip

\noindent Athanase Papadopoulos, Institut de Recherche Math\'ematique Avanc\'ee and Centre de Recherche et d'Expérimentation sur l'Acte Musical (Universit\'e de Strasbourg et CNRS),
7 rue Ren\'e Descartes
67084 Strasbourg Cedex France;

\noindent athanase.papadopoulos@math.unistra.fr

\end{document}